\newcommand\blfootnote[1]{%
  \begingroup
  \renewcommand\thefootnote{}\footnote{#1}%
  \addtocounter{footnote}{-1}%
  \endgroup
}
\DeclareFontFamily{U}{rsfs}{%
\skewchar\font127}
\DeclareFontShape{U}{rsfs}{m}{n}{%
<-6>rsfs5<6-8.5>rsfs7<8.5->rsfs10}{}
\DeclareSymbolFont{rsfs}{U}{rsfs}{m}{n}
\DeclareRobustCommand*\rsfs{%
\@fontswitch\relax\mathrsfs}
\theoremstyle{plain}
\newtheorem{thm}{Theorem}[section]
\newtheorem{prop}[thm]{Proposition}
\newtheorem{lem}[thm]{Lemma}
\newtheorem{defi}[thm]{Definition}
\newtheorem{rmk}[thm]{Remark}
\newtheorem{cor}[thm]{Corollary}
\newtheorem{prop-defi}[thm]{Proposition-Definition}
\newtheorem{thm-defi}[thm]{Theorem-Definition}
\newtheorem{lem-defi}[thm]{Lemma-Definition}
\newtheorem{exam}[thm]{Example}
\newtheorem{setup-def}[thm]{Setup-Definition}
\newtheorem{notation}[thm]{Notation}
\newenvironment{sproof}{%
  \proof}{\endproof}
\newdimen\argwidth
\def\db[#1\db]{
 \setbox0=\hbox{$#1$}\argwidth=\wd0
 \setbox0=\hbox{$\left[\box0\right]$}
  \advance\argwidth by -\wd0
 \left[\kern.3\argwidth\box0 \kern.3\argwidth\right]}
\newcommand{\cC}{\mathcal{C}}
\newcommand{\eE}{\mathcal{E}}
\newcommand{\fF}{\mathcal{F}}
\newcommand{\mM}{\mathcal{M}}
\newcommand{\oO}{\mathcal{O}}
\newcommand{\pP}{\mathcal{P}}
\newcommand{\sS}{\mathcal{S}}
\newcommand{\bA}{\mathbb{A}}
\newcommand{\bC}{\mathbb{C}}
\newcommand{\bL}{\mathbb{L}}
\newcommand{\bP}{\mathbb{P}}
\newcommand{\bT}{\mathbb{T}}
\newcommand{\fm}{\mathfrak{m}}
\newcommand{\fg}{\mathfrak{g}}
\newcommand{\fr}{\mathfrak{r}}
\newcommand{\fh}{\mathfrak{h}}
\newcommand{\fc}{\mathfrak{c}}
\renewcommand{\tilde}{\widetilde}
\newcommand{\tU}{\tilde{U}}
\newcommand{\Ob}{\mathcal{O}b}
\newcommand{\OB}{\mathop{\rm Ob}\nolimits}
\newcommand{\Hone}{\mathop{H^1}\nolimits}
\renewcommand{\hat}{\widehat}
\newcommand{\bom}{\bar{\omega}}
\newcommand{\coker}{\mathop{\rm coker}\nolimits}
\newcommand{\dspec}{\mathop{\mathbf{Spec}}\nolimits}
\newcommand{\lr}{\longrightarrow}
\newcommand{\Hom}{\mathop{\rm Hom}\nolimits}
\newcommand{\id}{\textrm{id}}
\newcommand{\Ext}{\mathop{\rm Ext}\nolimits}
\newcommand{\Spec}{\mathop{\rm Spec}\nolimits}
\newcommand{\Coh}{\mathop{\rm Coh}\nolimits}
\newcommand{\im}{\mathop{\rm im}\nolimits}
\newcommand{\Crit}{\mathop{\rm Crit}\nolimits}
\newcommand{\GL}{\mathop{\rm GL}\nolimits}
\def\Abul{A^\bullet}
\def\ck{\mathrm{ck}}
\def\fR{\mathfrak{R}}
\def\lal{_\lambda}
\def\lra{\longrightarrow}
\def\ti{\tilde}
\def\bl{\bigl(} \def\br{\bigr)}
\def\CC{\mathbb C}
\def\sta{^\ast}
\def\lalp{_\alpha}
\def\sub{\subset}
\def\cM{\mM}
\def\tcM{\tilde\cM}
\def\virt{^{\mathrm{vir}}}
\def\hW{\hat W}
\def\uss{^{ss}}
\def\beq{\begin{equation}}
\def\eeq{\end{equation}}
\def\git{/\!\!/}
\def\lalp{_\alpha }
\def\lab{_{\alpha\beta}}
\def\lbet{_\beta}
\def\rred{{\mathrm{red}}}
\def\hV{\hat V}
\def\hU{\hat U}
\def\uss{^{ss}}
\def\us{^s}
\title{Generalized Donaldson-Thomas Invariants via Kirwan Blowups}
\author{Young-Hoon Kiem}
\address{Department of Mathematics and Research Institute of Mathematics, Seoul
National University, Seoul 08826, Korea}
\email{kiem@snu.ac.kr}
\author{Jun Li}
\address{Shanghai Center for Mathematical Sciences, Fudan University,  Shanghai, China}
\email{lijun2210@fudan.edu.cn}
\author{Michail Savvas}
\address{Department of Mathematics, The University of Texas at Austin, Austin, TX 78712, USA}
\email{msavvas@utexas.edu}
\begin{document}

\begin{abstract}
We develop a virtual cycle theoretic approach towards generalized Donaldson-Thomas theory of Calabi-Yau threefolds.
Let $\mM$ be the moduli stack of Gieseker semistable sheaves of fixed topological type on a Calabi-Yau threefold $W$. 

We construct an associated Deligne-Mumford stack $\tilde{\mM}$ with an induced semi-perfect obstruction theory of virtual dimension zero and define the generalized Donaldson-Thomas invariant of $W$ via Kirwan blowups
to be the degree of the virtual cycle $[\tilde{\mM}]^{\mathrm{vir}}$. We show that it is invariant under deformations of the complex structure of $W$. 
\end{abstract}

\maketitle

\section{Introduction}

\subsection{Background, history and results} Donaldson-Thomas (abbreviated as DT hereafter) invariants were first introduced by Thomas in \cite{Thomas} as enumerative invariants counting stable sheaves with fixed Chern character $\gamma$
on Calabi-Yau threefolds, when the coarse moduli space of such stable sheaves is proper. 
Following the virtual cycle construction of DT invariants, they are invariant under deformation of the Calabi-Yau threefold.

DT invariants constitute an important enumerative theory of Calabi-Yau threefolds and are conjecturally (and in many cases provably so, cf. \cite{MNOP2,MNOP1}) related to other enumerative invariants, such as Gromov-Witten invariants, Stable Pair \cite{PT1,PT2} and Gopakumar-Vafa invariants \cite{MaulikToda}.
\blfootnote{YHK was partially supported by Samsung Science and Technology Foundation SSTF-BA1601-01; JL was partially supported by NSF DMS-1564500 and DMS-1601211;
MS was partially supported by a Stanford Graduate Fellowship, an Alexander S. Onassis Foundation Graduate Scholarship and an A.G. Leventis Foundation Grant.}

Later, Joyce-Song constructed generalized
Donaldson-Thomas invariants (in short GDT invariants) for moduli of semistable sheaves of Chern character $\gamma$ (cf. \cite{JoyceSong}). 
Their construction relies on Behrend's motivic interpretation of DT invariants (cf. \cite{BehFun}), which makes working
with GDT relatively easy and their construction generalizable to moduli of complexes. However, their proof that the GDT invariants 
are invariant under deformation of the complex structure of the Calabi-Yau threefold is indirect, relying on wall crossing, and seems difficult to cover
more general cases.

The purpose of this work is to provide a new construction of GDT invariants which makes their deformation invariance 
transparent. 
Our approach is first making use of Kirwan's partial desingularization \cite{Kirwan} scheme
to transform the Artin moduli stack $\mM$ of Gieseker semistable sheaves of Chern character $\gamma$ into
a Deligne-Mumford (abbreviated as DM) stack $\tilde\mM$, while at the same time ``lift" certain 2-term perfect complexes on a cover of 
$\mM$ to a (semi-)perfect
obstruction theory of $\tilde\cM$, thus obtaining a virtual cycle $[\tcM]\virt$. We call 
the so defined generalized DT invariant the generalized DT invariant via Kirwan blowups (in short DTK invariant).

We state our main theorem. Let $W$ be a smooth projective Calabi-Yau threefold;
let $\gamma \in H^*(W, \mathbb{Q})$ be as before and let $\cM$ be the stack of Gieseker semistable 
sheaves on $W$ of Chern character $\gamma$, rigidified by the automorphisms generated by homothesis of 
every sheaf, so that the open substack of stable sheaves $\mM^s\subset\mM$ is a DM stack.\footnote{All stacks of (semi)stable sheaves in this paper are rigidified as such.}

\begin{thm} [Theorem-Definition~\ref{Gen DT}] Let $\gamma \in H^*(W, \mathbb{Q})$ and $\cM^s\sub \mM$ be as stated.
Then Kirwan's partial desingularization process constructs a proper DM stack $\tilde{\mM}$, called the intrinsic stabilizer reduction of $\mM$, and a morphism $\tilde{\mM}\to
\mM$ so that $\tilde{\mM}\to\mM$ is an isomorphism over
$\mM^s\subset\mM$, and $\tilde\mM$ is equipped with a semi-perfect obstruction theory of virtual dimension zero,
extending that of $\mM^s$. This semi-perfect obstruction theory induces a virtual cycle 
$[\tilde{\mM}]^{\mathrm{vir}}\in A_0(\tilde\mM)$, whose degree
$\deg [\tilde{\mM}]^{\mathrm{vir}}$ is invariant under deformation of the complex structure of $W$.
\end{thm}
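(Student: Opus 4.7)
The plan is to assemble the theorem from four mostly independent pieces: construction of $\tilde\mM$, construction of the obstruction theory, definition of the virtual cycle, and deformation invariance. I would treat the first as essentially formal, the second as the technical heart, and the last by a family version of the first two.

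\textbf{Construction of $\tilde\mM$.} Locally near a closed point, $\mM$ is of the form $[V/G]$ for an affine $V$ with a reductive $G$ (coming from the local Luna-style presentation of the moduli of semistable sheaves). I would apply Kirwan's iterative partial desingularization: blow up the locus of points with maximal-dimensional stabilizer, pass to the proper transform of the semistable locus, repeat until all remaining orbits are closed with finite stabilizer, then take the GIT quotient. The output is a proper DM stack, and it is an isomorphism over the stable locus by construction. The main content here is functoriality of the procedure with respect to isomorphisms of local charts, so that local Kirwan blowups glue canonically to a global $\tilde\mM\to\mM$. Properness follows from properness of $\mM$ together with the fact that each blowup preserves properness on passing to the GIT quotient.

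\textbf{Semi-perfect obstruction theory.} This is where the real work lies. On $\mM^s$ one has the standard perfect obstruction theory from $\Ext^{1}$ and $\Ext^{2}$ of the universal sheaf, which is symmetric of virtual dimension zero by Serre duality on the Calabi-Yau threefold $W$. To extend this across $\tilde\mM$, I would work chart by chart: each local presentation $[V/G]$ of $\mM$ carries, because $W$ is CY3, a local Chern-Simons-type potential $w$ on $V$ whose critical locus is $V\cap\mM$, giving local $d$-critical charts in Joyce's sense. After one Kirwan blowup $\hat V\to V$, the pulled-back $w$ vanishes on the exceptional locus along the directions tangent to the blown-up stratum with multiplicity, and hence descends to a new potential $\hat w$ on $\hat V$ whose critical locus is the proper transform of $\Crit(w)$. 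Iterating, each terminal Kirwan chart gives a local $d$-critical/obstruction chart of virtual dimension zero on $\tilde\mM$. These charts will not glue into a global perfect obstruction theory, but I expect them to satisfy the Chang-Li compatibility (the obstruction sheaves agree on overlaps up to canonical isomorphism), making the collection a \emph{semi-perfect} obstruction theory in their sense that restricts to the standard one on $\mM^s$. The hard part, and what I would identify as the main obstacle, is checking this compatibility: one must show that the ambiguity in the Kirwan blowup of a $d$-critical chart (e.g. the choice of equivariant structure on the exceptional divisor) does not affect the obstruction sheaf or its local-to-global glue data.

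\textbf{Virtual cycle.} Given the semi-perfect obstruction theory of virtual dimension zero on the proper DM stack $\tilde\mM$, the Chang-Li machinery directly produces a virtual fundamental class $[\tilde\mM]\virt\in A_0\tilde\mM$, with no additional input required.

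\textbf{Deformation invariance.} For a smooth family $\mathcal W\to B$ of Calabi-Yau threefolds over a connected base $B$, I would construct the relative moduli stack $\mM_B$ of Gieseker semistable sheaves of Chern character $\gamma$ (using a relative polarization), then apply the Kirwan construction in families. By functoriality of the local procedure in flat parameters, this yields a relative proper DM stack $\tilde\mM_B\to B$, together with a relative semi-perfect obstruction theory whose restriction to any fiber $b\in B$ is the one constructed above for $W_b$. The virtual dimension of the relative theory is $\dim B$, and pulling back along $b\hookrightarrow B$ via the Gysin/specialization map sends $[\tilde\mM_B]\virt$ to $[\tilde\mM_{W_b}]\virt$. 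Since $\deg$ is locally constant in flat families of $0$-cycles on a proper stack, the DTK invariant is constant in $b$, proving deformation invariance.
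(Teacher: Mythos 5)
Your overall architecture is the right one (construct $\tilde\mM$, produce a semi-perfect obstruction theory chart by chart, invoke Chang--Li, run the argument in families), but there is a genuine gap in the central technical step, and it is not a small one.

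You claim that after one Kirwan blowup $\hV\to V$ the pulled-back potential $w$ ``vanishes on the exceptional locus along the directions tangent to the blown-up stratum with multiplicity, and hence descends to a new potential $\hat w$ on $\hV$ whose critical locus is the proper transform of $\Crit(w)$,'' and that iterating gives local \emph{d-critical} charts on $\tilde\mM$. This is false, and the paper is built precisely around the failure of this statement. What survives the Kirwan blowup is not the function $w$ but the $1$-form $\omega_V=dw$, and it survives only as a section $\omega_{\hV}$ of a \emph{modified} bundle $F_{\hV}\subset\pi^*\Omega_V$ (the ``blowup bundle'' of Definition~\ref{blow-up bundle}, obtained by untwisting the moving part of $\pi^*\Omega_V$ along the exceptional divisor). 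In general $F_{\hV}\neq\Omega_{\hV}$, $\omega_{\hV}$ is not closed, and there is no $\hat w$ with $d\hat w=\omega_{\hV}$ or with $\Crit(\hat w)=\hU$. A quick toy example: $V=\bA^2$ with $\bC^*$ acting with weights $(1,-1)$ and $w=\tfrac12(xy)^2$; in the chart $\xi=x$, $y=xu$ of the blowup, the Kirwan blowup $\hU$ is cut out by the principal ideal $(x^2u)$, which is a divisor and hence not the critical locus of any function of two variables. Consequently $\tilde\mM$ is \emph{not} a d-critical locus, and you cannot run Joyce's machinery directly on it.

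Because the charts on $\tilde\mM$ are not d-critical charts, the compatibility problem you correctly flag as ``the hard part'' cannot be handled by appealing to Joyce's comparison theorem for d-critical charts, or by tracking only ``the choice of equivariant structure on the exceptional divisor.'' The paper replaces d-critical charts by \emph{local models} $\Lambda_V=(U,V,F_V,\omega_V,D_V,\phi_V)$ (Setup~\ref{ind hyp}, Definition~\ref{local model}), where $D_V$ is an effective divisor accumulating the exceptional twists and $\phi_V\colon F_V\to\fg^\vee(-D_V)$ encodes the cosection; the $4$-term complex
$\fg \to T_V|_U \to F_V|_U \to \fg^\vee(-D_V)$
is what is quasi-isomorphic to a $2$-term reduced complex yielding the obstruction theory. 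The gluing is then controlled by the notions of $\Omega$-equivalence and $\Omega$-compatibility (Definitions~\ref{omega equivalence}, \ref{omega compatibility}), together with the lemmas showing these are preserved under Kirwan blowups and \'etale slices and that the induced comparison maps $\Phi^\ck$ on cokernels are isomorphisms. Without this apparatus your ``local d-critical/obstruction chart'' step does not go through.

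Two further, more minor, remarks. First, the paper constructs $\tilde\mM$ from a \emph{global} GIT presentation $X\subset P\subset(\bP^N)^{ss}$ (via the Quot scheme), using the $G$-intrinsic blowup to handle the singular $X$ and proving it is independent of the embedding; your purely local-then-glue construction would ultimately agree, but you should at least observe that the intrinsic blowup is canonical (Lemma~\ref{ideal of intrinsic}, Proposition~\ref{indep}) to justify gluing. Second, in the deformation-invariance step, ``functoriality in flat parameters'' hides real work: over a base curve $C$ there is no relative analogue of a local Chern--Simons potential, so the relative theory has to be set up directly from the relative $(-1)$-shifted symplectic form using minimal special cdgas and ``common roofs'' (Section~\ref{definv}), with the absolute d-critical charts replaced by quasi-critical charts where $\omega$ is an invariant but generally non-exact $1$-form. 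Your conclusion (relative $\tilde\mM$, fiberwise obstruction theory, conservation of number) is correct, but the derived input is essential and should not be elided.
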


We define 
$$\mathrm{DT}_\gamma(W) =\deg [\tilde{\mM}]^{\mathrm{vir}}
$$ 
to be the generalized DT invariant via Kirwan blowups, or DTK invariant.
\smallskip

It will be desirable to compare the GDT invariant via Kirwan blowups with the GDT invariant constructed by Joyce-Song.
We believe that the GDT invariant defined in this paper can be expressed as a ``universal" combination of the GDT invariants 
of Joyce-Song. The confirmation of such a relation will add to our understanding of GDT invariants of Calabi-Yau threefolds.

Besides the aforementioned work of Joyce-Song in \cite{JoyceSong}, there have been other works on constructing
(G)DT invariants, notably that of Kontsevich-Soibelman in \cite{KontSoibel}, and more recently by Behrend-Ronagh in \cite{BehRon2,BehRon1}. 

\subsection{Sketch of the construction} \label{construction outline}

Let $\mM$ be an Artin stack. We assume that $\mM$ is the truncation of a $(-1)$-shifted symplectic derived Artin stack, and $\mM$ is a GIT global quotient stack
$$\mM = [X / G],
$$
meaning that $G$ is reductive and $X$ is a $G$-invariant closed subscheme $X\sub P = (\bP^N)^{ss}$ so that $P$ is smooth and $G$ acts linearly on $\bP^N$ via a homomorphism $G\to GL_{N+1}$.
 
As $P$ is smooth, we can apply Kirwan's partial desingularization to get a $G$-equivariant morphism
$\pi:\tilde P\to P$ so that the GIT quotient $\ti P\git G$, which exists and is projective, is a geometric quotient with only possibly finite quotient singularities. In this paper, we call both $\ti P$ and the quotient $\ti P\git G$ the Kirwan partial desingularizations
of $P$ and $P\git G$, respectively. When the stable locus $P^s$ is empty, $\ti{P}^s$ is also empty, so $\ti P \git G = \emptyset$. We thus assume $P^s \neq \emptyset$ in what follows.

As $X\sub P$ is a closed subscheme, possibly singular, adapting the intrinsic blowup introduced by the first and second named authors \cite{KiemLi}, and applying the Kirwan partial desingularization procedure, we obtain a closed $G$-equivariant subscheme
$\ti X\sub\ti P$ fitting into the commutative squares
$$\begin{CD}
\tilde X @>{\sub}>> X\times_P\ti P @>{\sub}>> \ti P\\
@VVV @VVV@VVV\\
X@= X @>{\sub}>> P.
\end{CD}
$$

We define $\ti X$ and 
$$\ti \cM=[\ti X/ G]
$$
to be the intrinsic stabilizer reductions of $X$ and $\cM$, respectively.
We comment that $\ti X$ only depends on the $G$-structure on $X$, not the embedding $X\sub P$, and the inclusion
$\tilde X \sub X\times_P\ti P$ is strict when $X$ is singular along the locus where the stabilizers are positive dimensional. 
Thus $\ti\cM$ is canonically defined.

To construct the virtual cycle of $\ti\cM$, we use the theory of d-critical loci of Joyce, 
developed in \cite{JoyceDCrit}. Since $\mM$ is the truncation of a $(-1)$-shifted symplectic derived Artin stack, by \cite{JoyceArt} it is a d-critical Artin stack, which implies that
$X$ is a $G$-invariant d-critical locus. Then for every $x \in X$ such that $G \cdot x$ is closed in $X$ with $H$ denoting the stabilizer of $x$ in $G$ (hence reductive), we have $G$-invariant affine Zariski open $x \in U \sub X, x \in V \sub P$ and locally closed $H$-invariant affine subschemes $T \sub U,\ S \sub V$ granted by Luna's \'etale slice theorem such that there exists a diagram
\begin{equation}
\xymatrix{
[T / H] \ar[d]  \ar[r] & [ U / G] \ar[d] &\\
[S / H] \ar[r] & [V / G] 
}
\end{equation}
with \'{e}tale horizontal arrows and $T = ( df = 0 ) \subseteq S$ for $f \colon S \to \bA^1$ an $H$-invariant regular function on $S$.

Thus we have the following $H$-equivariant 4-term complex
\begin{equation}\label{e5}
\fh=\mathrm{Lie}(H)\lra T_{S}|_{T} \xrightarrow{d(df)^\vee} F_{S}|_{T}=\Omega_{S}|_{T} \lra \fh^\vee.
\end{equation}
For $x \in T$ with finite stabilizer, the first arrow is injective and the last arrow is surjective. Hence, \eqref{e5} is quasi-isomorphic to a 2-term complex which provides a perfect obstruction theory of $[T / H]$ and thus of $[U/G]$ near $x$.

In general, let $\ti x \in\ti X$ be lying over $x \in X$ with stabilizer $R$. Then we can lift \eqref{e5} canonically and find an
\'etale neighborhood $[ \ti T / R] \to [ \ti S / R ] \to [\ti P / G ]$ of the orbit of $\ti x$ in $[\ti P / G]$, a vector bundle $F_{\ti S}$ over $\ti S$ with an invariant section $\omega_{\ti S} \in H^0(\ti S, F_{\ti S})$ such that $\ti T = ( \omega_{\ti S} = 0 ) \sub \ti S$,
and a divisor $D_{\ti S}$, all $R$-equivariant, such that \eqref{e5} lifts canonically to a sequence
\begin{equation}\label{e6}
\fr = \mathrm{Lie}(R) \lra T_{\widetilde{S}}|_{\widetilde{T}} \lra F_{\widetilde{S}}|_{\widetilde{T}} \lra \fr^\vee(-D_{\widetilde{S}})
\end{equation}
whose first arrow is injective 
and last arrow is surjective. Therefore, \eqref{e6} is quasi-isomorphic to a 2-term complex
\begin{equation}\label{e7} 
( d \omega_{\ti S}^\vee )^\vee : T_{[\widetilde{S}/R]}|_{\widetilde{T}} \lra F^\rred_{\widetilde{S}}|_{\widetilde{T}},
\end{equation}
where $F^\rred_{\widetilde{S}}$ is the kernel of the last arrow in \eqref{e6}. 
Quotienting by $R$, we get
\begin{equation}\label{e7} 
( d (\omega_{\ti S}^\rred)^\vee )^\vee : T_{[\widetilde{S}/R]}|_{[\widetilde{T}/R]} \lra 
F_{[\widetilde{S}/R]}^\rred|_{[\widetilde{T}/R]}.
\end{equation}

We will show that the collection of the cokernels $\coker( d (\omega_{\ti S}^\rred)^\vee )^\vee$
patch to a coherent sheaf $\Ob_{\ti \cM}$ of
$\oO_{\ti\cM}$-modules, and that the symmetric obstruction theories of the various $T$ defined by $(df=0)$ induce a
semi-perfect obstruction theory on $\ti\cM = [\ti X/G]$, with obstruction sheaf $\Ob_{\ti \cM}$. (See \cite{LiChang} for the definition and properties of
semi-perfect obstruction theories.)

Using this semi-perfect obstruction theory in the case of Gieseker semistable sheaves of Chern character $\gamma$, we obtain a virtual cycle $[\ti\cM]\virt$ whose degree we define to be the generalized DT invariant via Kirwan blowups
\begin{equation}\label{e8} 
\mathrm{DT}_\gamma(W) = \deg [\widetilde{X}/G]^{\mathrm{vir}}.
\end{equation}

Finally, the relative version of the above construction can be carried out along parallel lines, using the machinery of derived symplectic geometry. This implies that 
$\mathrm{DT}_\gamma$ is invariant under deformation of the underlying Calabi-Yau threefold $W$.

\subsection{Further work and related results} In the present paper, we construct generalized DT invariants which act as counts of semistable sheaves by introducing the construction of the intrinsic stabilizer reduction $\tilde{\mM}$ of a d-critical global quotient stack $\mM$ and producing a semi-perfect obstruction theory on $\tilde{\mM}$. These results have been generalized in several directions. 

In \cite{Sav}, the third named author uses recent technical results on Artin stacks with good moduli spaces \cite{Alper, AHR2, AlpHalpHein} and stability conditions \cite{familystab} to extend the construction to the case of semistable perfect complexes. 

In \cite{KiemSavvas, KiemSavvasLoc}, the first and third named authors develop a general framework for handling $K$-theoretic invariants. Generalized DT invariants of sheaves and complexes via Kirwan blowups fall into this setting and thus admit a $K$-theoretic refinement.

We comment that the construction of the intrinsic stabilizer reduction presents independent interest from the point of view of derived algebraic geometry and could be related to an equivariant version of the derived blowup developed in \cite{Hekking}.

Finally, Edidin-Rydh \cite{EdidinRydh} have also developed a blowup procedure for Artin stacks with good moduli spaces which canonically eliminates stabilizers. Our construction differs from theirs and we comment on the relation between the two at the end of Section~\ref{Kirwan blow-up}. 

\subsection{Outline of the paper} In Section~\ref{Kirwan blow-up} we review Kirwan's partial desingularization for GIT quotients of smooth 
projective schemes and intrinsic blowups and their role in generalizing Kirwan's procedure to obtain the so called intrinsic stabilizer reduction of general projective GIT quotients. Section~\ref{dCrit} contains relevant material about d-critical loci, while Section~\ref{SemiPerf} addresses semi-perfect obstruction theories. In Section~\ref{LoCalc}, we gather several local computations and formalize definitions that are helpful in the rest of the paper. Finally, in Section~\ref{gendt} we use all of the above to construct generalized DT invariants. In Section~\ref{definv} we combine the absolute case and the machinery of derived algebraic geometry and $(-1)$-shifted symplectic structures to show their deformation invariance.

\subsection{Notation and conventions} Here are the various notations and other conventions that we use throughout the paper:
\begin{itemize}
\item[--] All varieties and schemes are defined over $\bC$.

\item[--] $C$ denotes a smooth quasi-projective curve over $\bC$. 

\item[--] $G$, $H$ denote complex reductive groups. Usually, $H$ will be a subgroup of $G$.

\item[--] If $U$ is a scheme over $C$, then $d_{U/C}$ denotes the relative de Rham differential for $U$. When $C$ is clear from context or suppressed in the notation, we often just write $d_U$.

\item[--] If $U \hookrightarrow V$ is an embedding, $I_{U \subseteq V}$ or $I_U$ (when $V$ is clear from context) denotes the ideal sheaf of $U$ in $V$.

\item[--] For a morphism $\rho \colon U \to V$ and a sheaf $E$ on $V$, we often use $E \vert_U$ to denote $\rho^* E$.

\item[--] If $U$ is a $G$-scheme, a point $u \in U$ is considered stable if its stabilizer is finite and its $G$-orbit is closed in $U$.

\item[--] If $V$ is a $G$-scheme, $V^G$ and $Z_G$ are both used to denote the fixed point locus of $G$ in $V$.

\item[--] If $U$ is a scheme with a $G$-action, then $\hU$ is used to denote the Kirwan blowup of $U$ with respect to $G$. $\tU$ denotes the intrinsic stabilizer reduction of $U$. Typically, whenever we perform a Kirwan blowup, $G$ will be connected, so we freely (and implicitly) assume this throughout.

\item[--] If $V$ is a $G$-scheme with an equivariant bundle $F_V$ and an invariant global section $\omega_V$, $F_{\hV}$ and $\omega_{\hV}$ denote the induced bundle and global section (see Section~\ref{Kirwan blow-up}) on the Kirwan blowup $\hV$.
Note that we follow the convention that the intrinsic stabilizer reduction is the result of a sequence of
Kirwan blowups.

\item[--] We often use the abbreviations DT, DM and GIT for Donaldson-Thomas, Deligne-Mumford and Geometric Invariant Theory respectively.

\end{itemize}

\section{Intrinsic Stabilizer Reduction} \label{Kirwan blow-up}

In this section, we work with the following situation, which is standard in Geometric Invariant Theory (GIT). Let $G$ be a reductive group; $P$ is a smooth scheme which is the semistable locus of a smooth projective variety with linearized $G$-action, and
$$X\sub P 
$$
is a $G$-invariant closed subscheme. Thus, we have a $G$-equivariant closed embedding $P \sub (\bP^N)\uss$, where by $(\bP^N)\uss$ (resp. $(\bP^N)\us$) we mean the open subscheme of GIT semistable (resp. stable) points under the linear $G$-action $G \to \GL_{N+1}$. (For GIT, see \cite{MFK}.) 

We start by briefly recalling Kirwan's desingularization procedure for smooth GIT quotients. This blowing up procedure, applied to the smooth scheme $P$, produces a new smooth scheme $\ti{P}$ with a linearized $G$-action satisfying $\ti{P}^s = \ti{P}^{ss}$ and a $G$-equivariant blowdown map $\ti{P} \to P$. When $P^s = \emptyset$, we have that $\ti{P}^s = \emptyset$ so we may assume that $P^s \neq \emptyset$ in what follows.

We then proceed to generalize the procedure to possibly singular GIT quotients. Using Luna's \'{e}tale slice theorem and adapting the intrinsic blowups introduced in \cite{KiemLi}, we may to define a $G$-invariant closed subscheme $\ti{X} \sub \ti{P}$ fitting in a commutative diagram
\begin{align*}
\xymatrix{
\ti{X} \ar[r] \ar[d] & \ti{P} \ar[d] \\
X \ar[r] & P.
}
\end{align*}

We show that $\ti{X}$ is independent of the choice of embedding $X \sub P$ and hence canonical and define $\ti{X}$, $[\ti{X} / G]$ and $\ti{X} \git G$ to be the intrinsic stabilizer reductions of $X$, $[X/G]$ and $X \git G$ respectively. Our main interest in the subsequent sections will be to use the Deligne-Mumford stack $[\ti{X} / G]$ in our construction of DT invariants counting semistable sheaves.

Finally, we note that our construction can be performed in greater generality than the case of global quotient stacks $[X / G]$ coming from GIT. The appropriate general setting is Artin stacks with good moduli spaces satisfying mild conditions. This is carried out in detail in the sequel \cite{Sav} to this paper, which extends the construction of DT invariants to semistable complexes, using recent technical results on stacks and stability conditions. Even though we are interested in the simpler GIT setting here, we frame our results in this language as well for the benefit of the reader.

\subsection{Kirwan's blowup algorithm for smooth schemes} \label{Kirwan algo}

We review Kirwan's partial desingularization $\tilde P$ of $P$. 

As the stabilizer groups of points in $P\us$ are all finite, $P^{s}\git G=P\us/G$
can have at worst finite quotient singularities. In stack language, this means that the quotient stack $[P^{s}/G]$ is a DM stack. 
When $P\us\ne P$, the GIT quotient $P \git G$ may have worse than finite quotient singularities, and the quotient stack is not DM. In \cite{Kirwan}, Kirwan produced a canonical procedure to blow up $P$ in order to produce a DM stack out of the Artin stack $[P/G]$. 

If the orbit of an $x\in P$ is closed in $P$, then the stabilizer $G_x$ of $x$ is a reductive subgroup of $G$. Let us fix a representative of each conjugacy class of subgroups $R$ of $G$ that appear as the identity component of the stabilizer $G_x$ of an $x\in P$ with $G\cdot x$ closed in $P$. Let $\fR(P)$ denote the set of such representatives. 
By \cite{Kirwan}, $\fR(P)$ is finite and $\fR(P)=\{1\}$ if and only if $P=P^s$. 

Let $R\in \fR(P)$ be an element of maximal dimension and let $Z_R$ be the fixed locus by the action of $R$, which is smooth. 
Then $GZ_R=G\times_{N^R}Z_R$ is smooth in $P$, where $N^R$ is the normalizer of $R$ in $G$.

We let $\pi: \mathrm{bl}_R(P) \to P$ be the blowup of $P$ along $GZ_R=G\times_{N^R}Z_R$.
Then $L=\pi^*\oO_P(1)(-\epsilon E)$ is ample for $\epsilon>0$ sufficiently small, 
where $E$ denotes the exceptional divisor of $\pi$. The action of $G$ on $P$ induces a linear action of $G$ on $\mathrm{bl}_R(P)$ with respect to $L$. 
The semistable points in the closure of $\mathrm{bl}_R(P)$ inside the projective space given by the embedding induced by the ample line bundle $L$ all lie in $\mathrm{bl}_R(P)$. We have the following important property, which in this setting was shown by Kirwan and in greater generality by Reichstein.

\begin{thm} \emph{\cite[Lemma~7.9, Remark~7.17]{Kirwan} \cite[Theorem~2.3, 2.4]{Reichstein}} \label{reichstein}
Let $Z = GZ_R \sub P$ as above and denote $q \colon P \to P \git G$. The unstable locus of $\mathrm{bl}_R(P)$ is the strict transform of the saturation $q^{-1}( q(Z) )$ of $Z$.
\end{thm}

It follows that the unstable points in $\mathrm{bl}_R(P)$ are precisely those points whose orbit closure meets the unstable points in $E=\bP N_{GZ_R/P}$, and the unstable points of $\mathrm{bl}_R(P)$ lying in the fiber 
$E|_x=\bP N_{GZ_R/P}|_x$ for $x\in GZ_R$ are precisely the unstable points of the projective space 
$\bP N_{GZ_R/P}|_x$ with respect to the linear action of $R$ by \cite[Lemma~7.8]{Kirwan} . 
We define 
$$\hat P=(\mathrm{bl}_R(P))\uss.
$$
By \cite{Kirwan}, $\fR(\hat P)=\fR(P)-\{R\}$.

\begin{defi} The scheme
$\hat P$ (resp. $\hat P \git G$) is called the Kirwan blowup of $P$ (resp. $P \git G$) with respect to the group $R$.
\end{defi}
Therefore, repeating the Kirwan blowup finitely many times, once for each element of $\fR(P)$ in order of decreasing
dimension, we end up with a $G$-equivariant morphism
$$\widetilde{P}\lra P$$
which induces a projective morphism
$$\widetilde{P} / G\lra P\git G.$$
As $\widetilde{P}$ is smooth,
$\widetilde{P}/G$ has at worst finite quotient singularities.

\begin{defi}
The scheme $\widetilde{P}$ (resp. $\tilde{P} / G$) is called the Kirwan partial desingularization of $P$
(resp. $P\git G$).
\end{defi}

\begin{rmk} \label{local GIT}
We note here that in the Kirwan blowup,  
we can detect which points on the exceptional divisors that occur are unstable just by looking at the action of $R$ on $\bP N_{GZ_R/P}$. Furthermore, a point off the exceptional divisor is unstable if the closure of its orbit meets the unstable locus of the exceptional divisor. 
Thus for any smooth affine $G$-scheme $V$, we can define its Kirwan blowup $\hat V$ associated to any $R\in\fR(V)$
of maximal dimension in the same way.
\end{rmk}

\subsection{Intrinsic blowups} 

Suppose that $U$ is a scheme with an action of a reductive group $G$. Suppose moreover that we have an equivariant embedding $U \to V$ into a smooth $G$-scheme $V$ and let $I$ be the ideal defining $U$. Since $U \subset V$ is $G$-equivariant, $G$ acts on $I$ and we have a decomposition $I = I^{fix} \oplus I^{mv}$ into the fixed part of $I$ and its complement as $G$-representations. 

Let $V^G$ be the fixed point locus of $G$ inside $V$ and $\pi \colon \mathrm{bl}_G(V) \to V$ the blowup of $V$ along $V^G$. 
Let $E\sub\mathrm{bl}_G(V)$ be its exceptional divisor and $\xi \in \Gamma( \oO_{\mathrm{bl}_G(V)}(E))$ the tautological defining equation of $E$. 
We claim that
\begin{equation}\label{xi}
\pi^{-1} (I^{mv}) \subset \xi \cdot \oO_{\mathrm{bl}_G(V)}(-E) \subset \oO_{\mathrm{bl}_G(V)}.
\end{equation}
Let $\mathcal R: I\to I^{fix}$ be the Reynolds operator. Then for any $\zeta\in I^{mv}$, $\mathcal R(\zeta)=0$. Let 
$x\in V^G$ be any closed point. Since $\oO_x$ is fixed by $G$, we have
$\zeta|_x=\mathcal R(\zeta)|_x=0$. This proves that all elements in $I^{mv}$ vanish along $V^G$, hence \eqref{xi}.

Consequently, $\xi^{-1} \pi^{-1} (I^{mv}) \subset \oO_{\mathrm{bl}_G(V)}$. 
We define $I^{intr}\sub\oO_{\mathrm{bl}_G(V)}$ to be 
\begin{align} \label{tilde I}
I^{intr} = \text{ideal generated by } \pi^{-1}(I^{fix}) \text{ and } \xi^{-1} \pi^{-1} (I^{mv}).
\end{align}
\begin{defi} \emph{(Intrinsic blowup)} \label{intrinsic blow-up}
The $G$-intrinsic blowup of $U$ is the  subscheme $U^{intr}\sub \mathrm{bl}_G(V)$ defined by the ideal $I^{intr}$. 
\end{defi}

\begin{lem} \label{ideal of intrinsic}
The $G$-intrinsic blowup of $U$ is independent of the choice of $G$-equivariant embedding $U \subset V$,
and hence is canonical.
\end{lem}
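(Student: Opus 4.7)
The plan is to prove independence by reducing, via a product embedding, to a local comparison that can be carried out by direct computation. Concretely, given two $G$-equivariant closed embeddings $\iota_1 \colon U \hookrightarrow V_1$ and $\iota_2 \colon U \hookrightarrow V_2$ into smooth $G$-schemes, I would form the product embedding $\iota = (\iota_1, \iota_2) \colon U \hookrightarrow V_1 \times V_2$. By symmetry, it suffices to show that the intrinsic blowup computed from $\iota$ agrees with the one computed from $\iota_1$; the same argument with the roles of $V_1$ and $V_2$ swapped will then yield the comparison with $\iota_2$, and hence the desired independence.

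To make this comparison, I would work \'etale-locally near a closed point $x \in U$ with reductive stabilizer $G_x \sub G$. Invoking Luna's equivariant slice theorem applied to the smooth $G$-scheme $V_1 \times V_2$ at $x$, and using smoothness of $V_2$ to identify the formal neighborhood of $\iota_2(U)$ in $V_2$ with the total space of a $G$-equivariant vector bundle, I would obtain an \'etale-local identification of $V_1 \times V_2$ with the total space of a $G$-equivariant vector bundle $\eE \to V_1$ such that $\iota$ is identified with the composition of $\iota_1$ and the zero section. Writing $\eE = \eE^{fix} \oplus \eE^{mv}$ with fiber coordinates $y^{fix}, y^{mv}$, the fixed locus becomes $V_1^G \times \eE^{fix}$ and the Kirwan blowup $\mathrm{bl}_G(\eE)$ can be described as the blowup of $\eE$ along this subscheme, with tautological section $\xi$ compatible with that of $\mathrm{bl}_G(V_1)$. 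A direct computation then shows that, inside the ideal $I^{intr}$, the elements $\xi^{-1} y^{mv}$ become unit generators, forcing the subscheme into $V_1 \times \eE^{fix}$, where the fixed generators $y^{fix}$ in turn cut out $V_1 \sub V_1 \times \eE^{fix}$. The remaining generators $I_1^{fix}$ and $\xi^{-1} I_1^{mv}$ thus define exactly the intrinsic blowup of $U$ in $V_1$.

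The main obstacle I anticipate is the Luna-style local reduction. While the identification of $V_1 \times V_2$ with the total space of $\eE$ over $V_1$ is standard at the formal level, what one needs is an \'etale-local version that simultaneously respects the $G$-action, the closed embedding of $U$, and the decomposition into fixed and moving parts; this requires some care because $\iota_2$ need not factor through a $G$-fixed point of $V_2$. A related subtlety is that the construction of $I^{intr}$ only depends on first-order information of the embedding along the moving directions, so once the local model is in place the computation is essentially forced; however, verifying that the resulting globally-defined subscheme is independent of the auxiliary \'etale choices requires a gluing argument relying on the fact that the intrinsic blowup is determined by $G$-equivariant data over $U$, and this step is delicate in the absence of an intrinsic universal characterization.
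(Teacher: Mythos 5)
Your high-level plan -- use a product embedding $\iota=(\iota_1,\iota_2)\colon U\hookrightarrow V_1\times V_2$ and reduce to comparing $\iota$ with $\iota_1$ by a local computation -- is the standard strategy and is indeed the route taken in \cite{KiemLi}, to which the paper's proof sketch refers. But the way you set up the local model has a genuine gap. You propose to ``identify the formal neighborhood of $\iota_2(U)$ in $V_2$ with the total space of a $G$-equivariant vector bundle,'' yet $U$ is in general singular, so the formal neighborhood of $\iota_2(U)$ in $V_2$ is not the total space of any vector bundle. What you actually need is a smooth $G$-equivariant subscheme of $V_1\times V_2$ containing $\iota(U)$ and isomorphic to (an open in) $V_1$: concretely, one extends $\iota_2\circ\iota_1^{-1}\colon \iota_1(U)\to V_2$ to a $G$-equivariant map $\phi\colon V_1\to V_2$ (smoothness of $V_1,V_2$ plus the Reynolds operator), factors $\iota$ as $U\xrightarrow{\iota_1}V_1\xrightarrow{(\mathrm{id},\phi)}V_1\times V_2$, and then uses that the product is formally the normal bundle of the graph $(\mathrm{id},\phi)(V_1)$. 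That equivariant extension is exactly the step Luna's slice theorem does not hand you; it is where the formal category becomes the natural setting, since extending equivariant maps along a closed equivariant subscheme in the formal completion is straightforward and involves no artificial choices.

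This also resolves the gluing worry you flagged. Rather than covering $U$ by arbitrary \'etale affines and then facing a delicate descent problem, the paper works with the formal completion $U^c$ of $U$ along the canonical closed subscheme $U^G$, proves the independence in that formal affine setting, and then observes that on $U-U^G$ the intrinsic blowup is just the identity (there the ideal $(\xi)$ is the unit ideal, so $I^{intr}$ agrees with the pullback of $I$). Gluing $U^c$ to $U-U^G$ is then automatic, and the general case follows by taking an invariant affine cover. Your observation that $I^{intr}$ depends only on first-order data of the embedding along the moving directions is the correct technical heart of the argument; to make it watertight you should run it through the paper's three-stage structure (formal affine, then affine via completion and gluing, then arbitrary scheme via cover) rather than \'etale locally.
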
 

\begin{sproof} 
The proof is identical to \cite[Section~3.1]{KiemLi}. We give a very brief account of the main steps involved.

One firstly establishes the claim in the case of $U$ being a formal affine scheme such that the fixed locus $U^G$ is an affine scheme which has the same support as $U$ does. 

For an affine scheme $U$ with a $G$-action, one can then take the formal completion ${U}^c$ of $U$ along 
$U^G$ and check that the $G$-intrinsic blowup of ${U}^c$ glues naturally with $U - U^G$ to yield the $G$-intrinsic blowup of $U$.

Finally, one can show by taking a cover by (Zariski or \'{e}tale) open affine schemes that the intrinsic blowup is well-defined for a general scheme or DM stack with a (representable) action of $G$.
\end{sproof}

\begin{rmk} \label{passage to formal}
The following remarks on the proof of Lemma~\ref{ideal of intrinsic} are in order:
\begin{enumerate}
\item If $U$ is smooth, then the $G$-intrinsic blowup coincides with the blowup of $U$ along $U^G$.
\item Since the core of the proof relies on working first in the formal completion of $V^G$ inside $V$ and proving the Lemma~\ref{ideal of intrinsic} in the case of formal schemes, this enables us to perform local calculations formally (or analytically) locally, if desired for convenience.
\end{enumerate}
\end{rmk}

Suppose $U$ is an affine $G$-scheme, then we can think of all points of $U$ as semistable. We can make sense of semistable points in $U^{intr}$ without ambiguity by considering $G$-equivariant embeddings $U \hookrightarrow V$ and using Remark~\ref{local GIT}. It is easy to check that the semistable locus is independent of the choice of particular embedding.

\begin{defi}[Kirwan blowup]
We define the Kirwan blowup of a possibly singular affine $G$-scheme $U$ associated with $G$ to be
$\hat U=(U^{intr})\uss$.
\end{defi}

\begin{exam} \label{example of Kirwan blowup}
Suppose that $G = \bC^\ast$ is the one-dimensional torus acting on the affine plane $V = \bC^2_{x,y}$ with weights $1$ and $-1$ on the coordinates $x$ and $y$ respectively and $U \sub V$ is the closed $G$-invariant subscheme cut out by the ideal $I = (x^2 y, x y^2)$. 

$V^{intr}$ is the blowup of $V$ along the fixed locus $V^G = \lbrace 0 \rbrace$. The unstable points are the punctured $x$-axis and $y$-axis together with the points $0, \infty$ of the exceptional divisor $\bP^1$. Thus we have that $\hV = \Spec [u ,v, v^{-1} ]$ where $G$ acts on $u, v$ with weights $1, -2$ respectively and the blowdown map $\hV \to V$ is given on coordinates by $x \mapsto u, y \mapsto uv$.

$\hU \sub \hV$ is the closed subscheme cut out by the ideal $(u^2)$, so that 
$$\hU = \Spec \left( \bC[u,v,v^{-1}] / (u^2) \right).$$

\end{exam}

\subsection{Blowup bundle and section}
Let $V$ be a smooth affine $G$-scheme, $F_V$ a $G$-equivariant vector bundle on $V$ and $\omega_V\in\Gamma(V,F_V)^G$
a $G$-invariant section. Then $U=(\omega_V=0)$ is a $G$-invariant subscheme of $V$.
Since $G$ is reductive, we have a decomposition into fixed and moving components
\begin{align}
F_V |_{V^G} = F_V|_{V^G}^{fix} \oplus F_V|_{V^G}^{mv}.
\end{align}

\begin{defi} \emph{(Blowup bundle)} \label{blow-up bundle}
Let $\pi : \hat{V} \rightarrow V$ be the Kirwan blowup of $V$ associated with $G$.
The blowup bundle of $F_V$, denoted by $F_{\hV}$,
is 
$$
F_{\hV} := \ker\bl \pi^* F_V \lra \pi^* (F_V|_{V^G}) \lra
 \pi^* (F_V |_{V^G}^{mv}) \br.
$$ 
The blowup section
$$\omega_{\hV} \in \Gamma(\hat V, F_{\hV}),
$$
is the lift of $\omega_V$, which exists since $\pi^* \omega_V$ maps to zero in $\pi^*( F_V|_{V^G}^{mv})$.
\end{defi}

To see that $F_{\hV}$ is indeed a vector bundle, we may argue as follows: The question is local, so we may assume that $F_V$ is trivial. Since $G$ is reductive, we can embed $V$ $G$-equivariantly into a $G$-representation $W = \bA^n$ with a trivial bundle $F_W = \bA^n \times \bA^m$ which is also a $G$-representation for the diagonal $G$-action and a split surjection $F_W^\vee |_V \to F_V^\vee$, giving rise to a split injection $F_V \to F_W|_V$ with locally free cokernel, so that $F_V$ is a subbundle of $F_W|_V$. As the embedding $V \sub W$ is regular, we have a Cartesian diagram of Kirwan blowups
\begin{align*}
\xymatrix{
\hV \ar[r] \ar[d] & \hat{W} \ar[d] \\
V \ar[r] & W
}
\end{align*}
and it is easy to check that $F_{\hV}$ is still a direct summand of $F_{\hW}|_{\hV}$.

We are thus reduced to the case of a trivial $G$-equivariant vector bundle $F_W = W \times \bA^m$ on a $G$-representation $W = \bA^n$, where $G$ acts diagonally and linearly on the two factors $W$ and $\bA^m$. Hence we may write $F_W = F_W^{fix} \oplus F_W^{mv}$ and it is immediate that $F_{\hW} = \pi^\ast F_W^{fix} \oplus \pi^\ast F_W^{mv}(-E)$, where $E$ is the exceptional divisor of the blowup $\pi \colon \hW \to W$, giving that $F_{\hW}$ is a vector bundle. As $F_{\hV}$ is a direct summand of $F_{\hW}|_{\hV}$, it is a vector bundle.

\begin{prop} \label{coinc}
Let $U' \subset \hat{V}$ be defined by the vanishing of $\omega_{\hV}$.
Then $U'$ is the Kirwan blowup $\hU$ of $U$. 
\end{prop}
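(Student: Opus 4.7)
The plan is to compare the defining ideals of the two closed subschemes $U'$ and $\hat U$ inside $\hat V$. By Lemma~\ref{ideal of intrinsic} and Remark~\ref{passage to formal}, both constructions are canonical and it suffices to verify the equality of ideals formally locally at a point $v \in V^G$.

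Setting up the local model: fix a $G$-equivariant formal trivialization $\hat\oO_{V,v} \cong \bC[[\underline x, \underline y]]$ with $V^G = (\underline y = 0)$, and $F_V \cong F^{fix} \oplus F^{mv}$ as $G$-equivariant bundles. Choose a basis $\{e_i\} \cup \{f_j\}$ of $F^{fix} \oplus F^{mv}$ compatible with the isotypic decomposition of $F^{mv}$. Then $\omega_V = \sum a_i e_i + \sum b_j f_j$ with $a_i \in \oO_V^G \sub I^{fix}$ and each $b_j$ in a specific non-trivial isotypic $\sigma_j^\vee$ of $\oO_V$; in particular $b_j \in I^{mv}$, and since $\sigma_j^\vee$ has no $G$-invariants, $b_j|_{V^G} = 0$, i.e., $b_j \in I_{V^G}$. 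The kernel definition gives $F_{\hat V} \cong F^{fix} \oplus F^{mv}(-E)$ formally, with $E = \pi^{-1}(V^G)$ locally defined by $\xi$ and local frame $\{e_i\} \cup \{\xi f_j\}$. In this frame,
$$\omega_{\hat V} = \sum \pi^*(a_i)\,e_i + \sum \bigl(\xi^{-1} \pi^*(b_j)\bigr)(\xi f_j),$$
where division by $\xi$ is legitimate because $b_j \in I_{V^G}$. Hence the ideal of $U'$ in $\oO_{\hat V}$ equals $J := \bigl(\pi^*(a_i),\,\xi^{-1}\pi^*(b_j)\bigr)$.

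Comparison with $I^{intr} = \pi^*(I^{fix}) + \xi^{-1}\pi^*(I^{mv})$: the containment $J \subseteq I^{intr}$ is immediate from $a_i \in I^{fix}$ and $b_j \in I^{mv}$. For the reverse, given $h \in I^{fix}$, expanding $h = \sum c_i a_i + \sum d_j b_j$ and using $\pi^*(b_j) = \xi \cdot \xi^{-1}\pi^*(b_j) \in J$ shows $\pi^*(h) \in J$. For $h$ in a non-trivial isotypic $I_\rho$, taking the $\rho$-projection of an expansion of $h$ and exploiting that $a_i$ lies in the trivial isotypic gives $h = \sum c_i^\rho a_i + \pi_\rho\bigl(\sum d_j b_j\bigr)$. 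Since $\xi$ (the local equation of the $G$-invariant divisor $E$) is $G$-invariant, isotypic projection commutes with $\xi^{-1}\pi^*(\cdot)$, yielding
$$\xi^{-1}\pi^*(h) = \sum \bigl(\xi^{-1}\pi^*(c_i^\rho)\bigr)\pi^*(a_i) + \pi_\rho\Bigl(\sum \pi^*(d_j)\cdot\xi^{-1}\pi^*(b_j)\Bigr).$$
The first sum lies in $J$ because each $c_i^\rho \in \oO_V^\rho$ vanishes on $V^G$, hence $\xi^{-1}\pi^*(c_i^\rho) \in \oO_{\hat V}$. The second is $\pi_\rho$ applied to an element of $J$, which stays in $J$ because $J$ is $G$-invariant (as the ideal of the $G$-invariant subscheme $U'$).

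The main obstacle I anticipate is this last $G$-equivariant step, which depends on three concurrent facts: the $G$-invariance of $\xi$, the $G$-invariance of $J$, and the careful choice of a basis respecting the isotypic decomposition so that $b_j \in I_{V^G}$. Once $J = I^{intr}$ is verified formally locally, canonicity propagates the equality globally, and restricting to the semistable locus gives $U' = \hat U$ as desired.
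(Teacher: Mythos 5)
Your proof follows the same general strategy as the paper's --- reduce to a formal local model around a point of $V^G$, identify the blowup bundle $F_{\hV}\cong F^{fix}\oplus F^{mv}(-E)$ with its local frame $\{e_i\}\cup\{\xi f_j\}$, and compare the ideal $J=(\pi^*(a_i),\xi^{-1}\pi^*(b_j))$ with $I^{intr}$. The paper, however, simply states that the coincidence of ideals is ``immediate'' once one sees that $\omega_{\hV}$ is $\pi^*\omega_V$ with its moving components divided by $\xi$. You correctly observe that only the inclusion $J\subseteq I^{intr}$ is immediate; the reverse inclusion $I^{intr}\subseteq J$ requires a genuine argument, because $I^{fix}$ and the isotypic pieces $I_\rho$ of $I^{mv}$ are not simply the $\oO_V$-modules generated by the $a_i$ and $b_j$ respectively. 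Filling in this step is a real improvement in detail.

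There is one step that does not quite hold as stated: you assert that ``$\xi$, the local equation of the $G$-invariant divisor $E$, is $G$-invariant'' and use this to commute isotypic projection $\pi_\rho$ past $\xi^{-1}\pi^*(\cdot)$. A local equation for $E$ is in general not $G$-invariant as a function; it transforms by the action of the stabilizer on the conormal bundle of $E$, which is typically nontrivial (already for a torus acting with nonzero weight on the corresponding moving coordinate). What is $G$-invariant is $\xi$ as a section of the equivariant line bundle $\oO(E)$. Consequently, $\xi^{-1}\pi^*(\pi_\rho(g))$ is not literally $\pi_\rho(\xi^{-1}\pi^*(g))$ but rather a \emph{twisted} isotypic component of $\xi^{-1}\pi^*(g)$. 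The conclusion you want still holds, since any isotypic component of an element of the $G$-invariant ideal $J$ lies in $J$, but the commutation as written is false. A cleaner route that avoids the issue entirely: because $\omega_V^\vee$ is $G$-equivariant and $(F^{mv})^\vee|_v$ is a $G$-submodule, the $\bC$-span of $\{b_j\}$ is a $G$-submodule of $\oO_V$; therefore $\pi_\rho\bigl(\sum d_j b_j\bigr)$ is again an $\oO_V$-linear combination $\sum d_j' b_j$, and then $\xi^{-1}\pi^*\bigl(\pi_\rho(\sum d_j b_j)\bigr)=\sum\pi^*(d_j')\,\xi^{-1}\pi^*(b_j)\in J$ directly. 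With this adjustment your argument is complete.
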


\begin{proof} Let $I$ be the ideal of $U$ in $V$, generated by the section $\omega_V$. We need to check that the ideal $I^{intr}$ given by \eqref{tilde I} coincides with the ideal generated by $\omega_{\hV}$. By the above, it suffices to work locally. But in local coordinates $\omega_{\hV}$ is obtained from $\pi^* \omega_V$ by multiplying the moving components with $\xi^{-1}$, which immediately implies the claim.
\end{proof}

\subsection{Intrinsic stabilizer reduction for singular schemes} \label{intr part desing} 

We continue working with the $G$-equivariant embedding $X\sub P$ mentioned at the beginning of this section.
We list 
$$\fR(P)=\{R_1, \dots, R_m,\{1\}\}$$ 
in order of decreasing dimension.

We begin with $R=R_1 \in \fR(P)$. 
For any $x \in Z_{R} \sub P$, let $S$ be an \'{e}tale affine slice for $x$ in $P$, provided by Luna's \'{e}tale slice theorem (cf. \cite[Theorem~5.3]{Drezet}), and let 
$T = S\times_P X$. (In case $x\not\in X$, we can choose $S$ so that $T=\emptyset$.)

As $S_{}$ is smooth, affine and $R$-invariant, we let $\hat S$ be the Kirwan blowup of $S$ associated with $R$.
As $T\sub S$ is closed and $R$-invariant, we let $\hat T\sub\hat S$ be the Kirwan blowup of $T$ associated with $R$.
They fit into a commutative diagram
\begin{align}
\xymatrix{
G \times_R \hat{T}_{} \ar[r] \ar[d] & G \times_R \hat{S}_{} \ar[r] \ar[d] & \hat{P} \ar[d] \\
G \times_R {T}_{} \ar[r] \ar[d] & G \times_R {S}_{} \ar[r] & {P} \\
X \ar[rru]  
}
\end{align}
This collection of \'{e}tale maps $G \times_R S_{}\to P$ cover the locus $GZ_R$ inside $P$.
Let $E\sub \hat P$ be the exceptional divisor of $\hat P\to P$.
Because $P-GZ_R =\hat P-E$, $P-GZ_R$ can be viewed as an open subscheme of $\hat P$.
Consequently, the collection of \'etale maps $G \times_R \hat S_{}\to \hat P$ together with $P-E$ form an \'etale covering of $\hat P$.

We next consider the collection of all possible $G\times_R\hat T\sub G\times_R\hat S$.

\begin{prop} \label{indep} The collection of $G\times_R\hat T\to \hat P$ just mentioned, together with $X-GZ_R\sub \hat P-E$, form a closed subscheme $\hat X\sub\hat P$, called the Kirwan blowup of $X$.
Further, $\hat{X}$ is canonical in that it is independent of the choice of slices or choice of projective embedding.
\end{prop}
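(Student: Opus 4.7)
The plan is to construct $\hat X$ by \'etale descent on $\hat P$. The collection of \'etale maps $G\times_R\hat S\to\hat P$, as $S$ ranges over Luna slices at closed orbits in $Z_R\sub P$, together with the open immersion $\hat P-E\cong P-GZ_R\hookrightarrow\hat P$, forms an \'etale cover of $\hat P$. On each chart I prescribe a closed subscheme: $G\times_R\hat T$ on a slice patch, where $\hat T$ is the Kirwan blowup of $T=S\times_PX$ with respect to $R$, and $X-GZ_R$ on the open complement. It then suffices to verify that these prescriptions agree on every pairwise overlap, after which \'etale descent assembles them into the desired closed subscheme $\hat X\sub\hat P$.

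For the overlap between a slice patch and the open complement, $\hat S-E_S$ is canonically $S-S^R$ under $\hat S\to S$, and by Proposition~\ref{coinc} the Kirwan blowup $\hat T$ restricted to $\hat S-E_S$ equals $T\cap(S-S^R)$, which matches the pullback of $X-GZ_R$ under the \'etale map $S-S^R\to P-GZ_R$. For the overlap of two slice patches $S_1,S_2$, I pass to the \'etale refinement $S_1\times_PS_2$. Both pullbacks of $T_1$ and $T_2$ to this refinement coincide with $X\times_P(S_1\times_PS_2)$. The Kirwan blowup $\hat T_i$ is by definition the semistable locus of the intrinsic $R$-blowup of $T_i\sub S_i$; by Lemma~\ref{ideal of intrinsic} this intrinsic blowup depends only on the $R$-scheme structure of the subscheme, so the two pullbacks agree as closed subschemes of the common intrinsic blowup of $S_1\times_PS_2$. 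The semistable loci likewise agree because by Remark~\ref{local GIT} semistability on the exceptional divisor is determined purely by the $R$-action on the normal bundle of the fixed locus.

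For canonicity, independence under a change of Luna slice at a fixed closed orbit is an instance of exactly the same overlap argument. Independence of the projective embedding $P\sub(\bP^N)\uss$ follows from observing that the construction uses only the $G$-scheme $P$, the closed $G$-invariant subscheme $X\sub P$, and the choice of $R\in\fR(P)$, together with the semistable condition defining $\hat P$. By Remark~\ref{local GIT}, semistability on the exceptional divisor is detected intrinsically by the $R$-action on $N_{GZ_R/P}$, while semistability off the exceptional divisor is inherited from $(\bP^N)\uss$ via the embedding; hence $\hat P$ itself is canonical up to isomorphism, and so is $\hat X$.

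The main subtlety I anticipate is the two-slice overlap step, since Lemma~\ref{ideal of intrinsic} is formulated for a single $G$-equivariant embedding into a smooth $G$-scheme, whereas here I compare two intrinsic blowups pulled back along different \'etale maps to $P$. I plan to handle this by reducing to the formal-local setting around points of $Z_R$, as permitted by Remark~\ref{passage to formal}(2), invoking the formal-affine case already established in the sketch of Lemma~\ref{ideal of intrinsic}; the global $G$-equivariant gluing of the resulting closed subschemes then follows by standard descent for the principal $R$-bundles underlying the quotient construction $G\times_R(-)$.
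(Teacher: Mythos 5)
Your overall strategy — build $\hat X$ by \'etale descent on the cover of $\hat P$ by the slice charts $G\times_R\hat S$ together with $\hat P-E\cong P-GZ_R$, check agreement on pairwise overlaps, and reduce canonicity to the intrinsic nature of the $R$-blowup plus Remark~\ref{local GIT} — is the same as the paper's. But there is a genuine error in the two-slice overlap step. You write that you ``pass to the \'etale refinement $S_1\times_P S_2$.'' The slices $S_i$ are locally closed subschemes of $P$, and the maps $S_i\to P$ are \emph{not} \'etale — only $G\times_R S_i\to P$ is. Thus $S_1\times_P S_2$ is literally the scheme-theoretic intersection $S_1\cap S_2$ inside $P$, which is typically empty or of the wrong dimension, and is certainly not an \'etale refinement of the two charts. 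The correct object is $S_1\times_{[P/G]}S_2$ (equivalently, up to a $G$-factor, $(G\times_R S_1)\times_P(G\times_R S_2)$), which is affine because $[P/G]$ has affine diagonal. Moreover, even after fixing the fiber product, you would still face the issue that this object carries an $R\times R$-action rather than a single $R$-action; the paper resolves this by choosing a Luna slice $S_{12}$ at an $R$-fixed point $z$ of $S_1\times_{[P/G]}S_2$, checking that the two projections $p_i\colon S_{12}\to S_i$ are $R$-equivariant and \'etale (equal dimensions, isomorphism on tangent spaces at $z$), and only then pulling back the ideals $I_1,I_2$ and applying the fact that Kirwan blowups commute with \'etale base change. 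You skip this slice-inside-the-fiber-product step entirely, which is where the real content of the comparison lies.

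The subtlety you flag at the end — that Lemma~\ref{ideal of intrinsic} concerns a single embedding — is actually not the sticking point: once you have a common $R$-equivariant \'etale refinement $S_{12}$ mapping to both $S_1$ and $S_2$ with matching pulled-back ideals, the comparison of intrinsic blowups is immediate (\'etale base change, or alternatively your appeal to the embedding-independence of the intrinsic blowup). Your handling of the slice-vs.-complement overlap and your discussion of independence of the projective embedding (via Lemma~\ref{ideal of intrinsic}, Remark~\ref{local GIT}, and covering $X$ by invariant affine opens embedded in smooth $G$-schemes coming from $P$) are both correct and match the paper's argument.
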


\begin{proof}
We first show the independence from the particular choice of slices.

Let $S_1, S_2$ be two \'{e}tale slices in $P$, such that $T_1 = S_1 \cap X, \ T_2 = S_2 \cap X$ are the induced slices for $X$. Let $I_1, I_2$ be the ideal sheaves of $T_1 \sub S_1$ and $T_2 \sub S_2$ respectively.

Near every point in $P$ covered by $S_1$ and $S_2$, we can find a common \'{e}tale refinement $S_{12}$. This can be seen as follows: Since $[P/G]$ has affine diagonal, the fiber product $S_1 \times_{[P/G]} S_2$ is an affine scheme with a $(R \times R)$-action. For any point $z \in P$ fixed by $R$ we may take a slice $S_{12}$ for $z$ in $S_1 \times_{[P/G]} S_2$.

Consider the composition $p_i \colon S_{12} \to S_1 \times_{[P/G]} S_2 \to S_i$. Since $S_{12}$ and $S_i$ are smooth of the same dimension and $p_i$ induces an isomorphism on tangent spaces at $z$, it must be \'{e}tale (up to shrinking). It is also evidently $R$-equivariant and hence is indeed an \'{e}tale refinement of $S_1$ and $S_2$. It follows that $p_1^* I_1 \simeq p_2^* I_2 \simeq I_{12}$ as ideal sheaves on $S_{12}$, defining $T_{12} \sub S_{12}$.

Taking Kirwan blowups commutes with \'{e}tale base change. We thus obtain induced \'{e}tale maps $\hat{p}_i \colon \hat{S}_{12} \to \hat{S}_i$, such that $\hat{p}_1^\ast I_1^{intr} \simeq \hat{p}_2^\ast I_2^{intr} \simeq I_{12}^{intr}$, defining the subscheme $\hat{T}_{12} \sub \hat{S}_{12}$.

Since we may cover $(G \times_R S_1) \times_P (G \times_R S_2)$ by \'{e}tale opens of the form $G \times_R S_{12}$ around the fixed points of $R$, \'{e}tale descent implies that we obtain a well-defined closed subscheme $\hat X \sub \hat P$.

Regarding the choice of projective embedding, we may cover $X$ by $G$-invariant affine opens $U\lalp \sub X$. Let $U\lalp \to V\lalp$ be equivariant embeddings into smooth $G$-schemes. Using those, we may define $\hat U\lalp \sub \hat V\lalp$ by first taking intrinsic blowups and restricting to semistable points. We observe that by Remark~\ref{local GIT} the latter restriction is unambiguous, as the unstable points of the intrinsic blowup are the ones whose $G$-orbit closure intersects the unstable locus of the exceptional divisor. Therefore, by the canonical nature of intrinsic blowups, for each $\alpha$ the Kirwan blowup $\hat U\lalp$ is independent of the local embedding $U\lalp \to V\lalp$. Since we may choose those to come from a $G$-invariant open cover $V\lalp \sub P$ of any projective embedding $X \sub P$, it follows that $\hat X$ is independent of the choice of projective embedding. \end{proof}

We let $P_1=\hat P$ and $X_1=\hat X$ be their respective Kirwan blowups associated with $R_1$. Then
$X_1\sub P_1$, and $\fR(P_1)=\{R_2,\cdots, R_m,\{1\}\}$.
We let $X_2\sub P_2$ be $\hat {X_1}\sub \hat{P_1}$, the Kirwan blowups associated with $R_2$, and so on, until we obtain
$X_m\sub P_m$, having the property $\fR(P_m)=\{1\}$.

We denote 
$$\ti X=X_m,\quad \ti P=P_m.
$$

\begin{defi}\label{def-3.8}
We call $\ti X$ and $\tilde{\mM} = [ \tilde{X} / G ]$ the intrinsic stabilizer reduction of $X$ and the Artin stack $\mM = [ X / G ]$, respectively.
\end{defi}

\begin{rmk}\label{dense}
When $X^s$ is dense in $X$, then $\hat X\to X$ is birational and $\tilde{\mM} \to \mM$ is proper and birational. The other extreme case is when $X^G=X$, then
$\hat X=\emptyset$. In general there are cases when $(X^G)_{\text{red}}=X_{\text{red}}$ and $\hat X\ne \emptyset$. One such case is given in Example~\ref{example of Kirwan blowup}.
\end{rmk}

\subsection{Intrinsic stabilizer reduction for stacks with good moduli spaces} So far in this section we have defined the intrinsic stabilizer reduction $\tilde{\mM}$ for a global quotient stack $\mM = [X / G]$ coming from GIT. 

In our construction, we have used Luna's \'{e}tale slice theorem which allows us to work locally on Cartesian diagrams of the form
\begin{align} \label{etale slice diagram}
\xymatrix{
[U / G_x ] \ar[d] \ar[r] & \mM \ar[d] \\
U \git G_x \ar[r] & M,
}
\end{align}
where $x$ ranges through all the closed points of $\mM$ with (reductive) stabilizer $G_x$ and $M = X \git G$. These diagrams provide an \'{e}tale cover for the stack $\mM$.

More generally, the construction can be applied to Artin stacks $\mM$ with good moduli space $\mM \to M$, introduced in \cite{GoodAlper}, which are a generalization of global quotient stacks $\mM = [X / G]$ with a morphism $\mM \to M = X \git G$ coming from GIT. The \'{e}tale slice theorem proved in \cite{Alper} then implies the existence of the diagrams \eqref{etale slice diagram}.

This is carried out carefully in full generality in the sequel \cite{Sav}, which extends the techniques of the present paper to define generalized DT invariants of moduli stacks of semistable complexes. These admit good moduli spaces but do not come from GIT. 

In this context, Edidin-Rydh have also developed a blowup procedure for stacks $\mM$ with good moduli spaces in \cite{EdidinRydh}. For smooth quotient stacks $\mM = [X / G]$, our construction produces the same result as theirs, which is the original Kirwan partial desingularization of $\mM$. For singular stacks, our Kirwan blowups can be phrased in their language of saturated blowups, however the stack they obtain is generally a closed substack of our intrinsic stabilizer reduction $\tilde{\mM}$, as the following example showcases. 
We would like to thank them for kindly informing us of their work.

\begin{exam}
Similarly to Example~\ref{example of Kirwan blowup}, let $G= \bC^\ast$ act on $V = \bC^2_{x,y}$ with weights $1, -1$ on the coordinates $x, y$ respectively and define $U \sub V$ to be the $G$-invariant closed subscheme cut out by the ideal $I = (xy)$.  

The Kirwan blowup of the scheme $U$ is $\hU = \Spec \left( \bC[u,v,v^{-1}] / (u^2) \right)$.

On the other hand, the fixed locus of $U$ is $U^G = \lbrace 0 \rbrace$ with the reduced scheme structure. Thus, $U \git G = \Spec \left( \bC [xy] / (xy) \right)$ and $U^G \git G$ define the same scheme, and the saturated blowup of $U$ with center $U^G$ defined in \cite{EdidinRydh} will be empty. This is clearly different from the Kirwan blowup $\hU$, which is non-empty.
\end{exam}

\section{d-critical Loci} \label{dCrit}

In this section, we recall Joyce's theory of d-critical loci, as developed in \cite{JoyceDCrit}, and establish some notation. 

We comment that there is also a parallel theory of critical virtual manifolds, developed in \cite{KiemLiCat}, which is equivalent to the theory of d-critical loci for the cases considered in this paper and could alternatively be used as well.

\subsection{d-critical schemes} \label{d-crit section} We begin by defining the notion of d-critical charts. Let $M$ be a scheme.

\begin{defi}\emph{(d-critical chart)} \label{d-crit chart}
A d-critical chart for $M$ is the data of $(U,V,f,i)$ such that: $U \subseteq M$ is Zariski open, $V$ is a smooth scheme, $f \colon V \to \bA^1$ is a regular function on $V$ and $U \xrightarrow{i} V$ is an embedding so that $U = (d_{V} f=0) = \Crit(f) \subseteq V$.

If $x \in U$, then we say that the d-critical chart $(U,V,f,i)$ is centered at $x$.
\end{defi}

Joyce defines a canonical sheaf $\sS_M$ of $\bC$-vector spaces with the property that for any Zariski open $U \subseteq M$ and an embedding $U \hookrightarrow V$ into a smooth scheme $V$ with ideal $I$, $\sS_M$ fits into an exact sequence
\begin{align}
0 \lr \sS_M |_U \lr \oO_V / I^2 \stackrel{d_{V}}{\lr} \Omega_{V}^1 / I \cdot \Omega_{V}^1.
\end{align}
\begin{exam} For a d-critical chart $(U,V,f,i)$ of $M$, the element $f+I^2 \in \Gamma(V,\oO_V/I^2)$ gives a section of $\sS_M|_U$.
\end{exam}

\begin{defi}\emph{(d-critical scheme)} \label{d-crit scheme}
A d-critical structure on a scheme $M$ is a section $s \in \Gamma(M, \sS_M)$ such that $M$ admits a cover by d-critical charts $(U,V,f,i)$ and $s|_U$ is given by $f+I^2$ as above on each such chart. We refer to the pair $(M,s)$ as a d-critical scheme.
\end{defi}

For a d-critical scheme $M$ and a Zariski open $U \subseteq M$, any embedding $U \hookrightarrow V$ into a smooth scheme can be locally made into a d-critical chart.

\begin{prop} \emph{\cite[Proposition~2.7]{JoyceDCrit}}\label{modify embedd}
Let $M$ be a d-critical scheme, $U \subseteq M$ Zariski open and $i \colon U \hookrightarrow V$ a closed embedding into a smooth scheme $V$. Then for any $x \in U$, there exist Zariski open $x \in U' \subseteq U$, $i(U') \subseteq V' \subseteq V$ and a regular function $f' \colon V' \to \bA^1$ such that $(U', V', f', i|_{U'})$ is a d-critical chart centered at $x$.
\end{prop}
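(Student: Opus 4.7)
The plan is to transport a known d-critical chart at $x$ into one adapted to the given embedding $i\colon U\hookrightarrow V$, using smoothness of the ambient schemes to construct an \'{e}tale comparison morphism between them.

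Since $(M,s)$ is a d-critical scheme, by definition there exists a d-critical chart $(U_0,V_0,f_0,i_0)$ of $M$ centered at $x$. After shrinking, I may replace $U$ by $U\cap U_0$ and take $V$, $V_0$ to be affine with $i(x)$ and $i_0(x)$ lying in the respective ambients. Using smoothness of $V_0$ and affineness of $V$, I extend the coordinate functions of $V_0\subseteq\bA^{\dim V_0}$ pulled back by $i_0$ from the closed subscheme $i(U)\subseteq V$ to $V$, obtaining a morphism $\psi\colon V\to V_0$ with $\psi\circ i=i_0$ (defined after shrinking $V$ so that its image lands in $V_0$, which is possible since $\psi(i(x))=i_0(x)\in V_0$). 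By first stabilizing the chart, that is, replacing $(U_0,V_0,f_0,i_0)$ with $(U_0,V_0\times\bA^k,f_0+z_1^{2}+\cdots+z_k^{2},i_0\times 0)$, and symmetrically de-stabilizing via the algebraic Morse lemma when $\dim V_0>\dim V$, I arrange $\dim V=\dim V_0$. Adjusting the lifts of coordinate functions by sections of $I_{U\subseteq V}$ in the conormal directions then makes the differential $d\psi_{i(x)}\colon T_{i(x)}V\to T_{i_0(x)}V_0$ an isomorphism, so $\psi$ is \'{e}tale near $i(x)$.

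I then take $V'\subseteq V$ to be a Zariski open neighborhood of $i(x)$ on which $\psi$ is \'{e}tale with image in $V_0$, set $U'=U\cap V'$, and define $f'=f_0\circ\psi\colon V'\to\bA^{1}$. By \'{e}taleness of $\psi$, the scheme-theoretic identifications $\Crit(f')=\psi^{-1}(\Crit(f_0))=\psi^{-1}(U_0)=U'$ hold as closed subschemes of $V'$, where the last equality follows from $\psi\circ i|_{U'}=i_0|_{U'}$ together with $i$ being a closed embedding. Moreover, $f'$ represents the d-critical section $s|_{U'}$ in $\sS_M|_{U'}$ by the functorial behavior of $\sS_M$ under \'{e}tale pullbacks of d-critical charts. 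Therefore $(U',V',f',i|_{U'})$ is a d-critical chart centered at $x$, as required.

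The main obstacle is the dimension-matching step, specifically the de-stabilization by the algebraic Morse lemma when $\dim V_0>\dim V$, which is naturally formal- or \'{e}tale-local rather than Zariski-local at $x$. An alternative strategy that bypasses explicit Morse manipulations is to lift $s|_U$ directly to a regular function $f'$ on $V$ through the defining short exact sequence of $\sS_M$ (possible by affineness of $V$), so that automatically $U\subseteq\Crit(f')$ and then to compare $f'$ with $(U_0,V_0,f_0,i_0)$ on the common ambient $V\times V_0$, modifying $f'$ by an element of $I_{U\subseteq V}^{2}$ so that its critical locus coincides scheme-theoretically with $U'$ near $x$.
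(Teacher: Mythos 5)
This proposition is cited from Joyce's paper \cite[Proposition~2.7]{JoyceDCrit}; the present paper does not reprove it, so there is no in-paper argument to compare against. Judged on its own terms, your proposal correctly identifies several of the right ingredients (lifting the d-critical section through the defining exact sequence for $\sS_M$, comparing against a known d-critical chart $(U_0,V_0,f_0,i_0)$, and the usefulness of stabilization), and you are right to flag the Zariski-versus-\'etale issue yourself. But as it stands the proof has two genuine gaps, one in each of the two strategies you outline.

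In the main strategy, the step that does not go through is the ``de-stabilization via the algebraic Morse lemma'' when $\dim V_0 > \dim V$. The Morse splitting $f_0 = \bar f_0 + q$ with $q$ a nondegenerate quadratic form in transverse coordinates is only available \'etale- or formally locally over $\bC$, whereas the conclusion of the proposition asks for a Zariski-open $V'\subseteq V$ and a regular function on it. You cannot sidestep this by replacing $V_0$ by a minimal d-critical chart, because the existence of a minimal d-critical chart at $x$ is exactly an instance of the proposition you are trying to prove; nor can you stabilize $V$ itself, since the statement forces the new ambient to sit inside the given $V$. So the case $\dim V_0 > \dim V$ (which is unavoidable in general) is not handled. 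The adjustment of the lift $\psi$ to make $d\psi_{i(x)}$ an isomorphism is fine as a linear-algebra step once dimensions match, but the dimension-matching itself is the obstruction.

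In the alternative strategy, the assertion ``modifying $f'$ by an element of $I_{U\subseteq V}^2$ so that its critical locus coincides scheme-theoretically with $U'$ near $x$'' is precisely the content that has to be proved, not a manipulation one can take for granted. Lifting $s|_{U'}$ to $f'\in\Gamma(V',\oO_{V'})$ only gives $U'\subseteq\Crit(f')$, and the inclusion can be strict (already $f'=0$ is a lift when $s|_U=0$, e.g. for $U=\{0\}\subset V=\bA^2$). Modifying by $g\in I^2$ changes the Hessian of $f'$ at $i(x)$ only in the directions transverse to $T_xU$, so the needed modification is one making this transverse block nondegenerate; but even granting nondegeneracy of the transverse Hessian, concluding the scheme-theoretic equality $(df'')=I$ near $i(x)$ is not immediate (same linear parts of two ideals with one contained in the other does not force equality). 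That remaining comparison, which must bring in the fact that $s$ is represented by a genuine d-critical chart somewhere, is exactly where Joyce's argument does real work, and your sketch leaves it as an unsubstantiated hope. As written, then, neither branch of the proposal constitutes a complete proof.
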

In order to compare different d-critical charts, we need the notion of an embedding.

\begin{defi}
Let $(U,V,f,i)$ and $(R,W,g,j)$ be two d-critical charts for a d-critical scheme $(M,s)$ with $U \subseteq R$ Zariski open. We call a locally closed embedding $\Phi : V \to W$ an embedding between the two charts if $f = g \circ \Phi \colon V \to \bA^1$ and the following diagram commutes
\begin{align*} \notag
\xymatrix{
U \ar[r]^-{i} \ar[d] & V \ar[d]^{\Phi}\\
R \ar[r]^{j} & W.
}
\end{align*}
By abuse of notation, we use $\Phi : (U,V,f,i) \to (R,W,g,j)$ to denote this data.
\end{defi}

We then have the following way to compare different overlapping d-critical charts.

\begin{prop} \emph{\cite[Theorem~2.20]{JoyceDCrit}}\label{d-crit chart comparison}
Let $(U,V,f,i)$ and $(S,W,g,j)$ be two d-critical charts centered at $x$ for a d-critical scheme $(M,s)$. Then, after possibly (Zariski) shrinking $V$ and $W$ around $x$, there exists a d-critical chart $(T,Z,h,k)$ centered at $x$ and embeddings $\Phi : (U,V,f,i) \to (T,Z,h,k)$, $\Psi : (R,W,g,j) \to (T,Z,h,k)$.
\end{prop}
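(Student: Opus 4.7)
The plan is to construct the common chart inside an enlargement of $V \times W$ and match the two regular functions by a stabilization-plus-correction argument.

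After Zariski-shrinking $V$ and $W$ around $x$, the diagonal
\[
k = (i,j) \colon U \cap S \hookrightarrow V \times W
\]
is a closed embedding of an open neighborhood of $x$ in $M$ into a smooth ambient scheme. Applying Proposition~\ref{modify embedd} to $k$ produces a Zariski open $Z_0 \subseteq V \times W$ containing $k(x)$ and a regular function $h_0 \colon Z_0 \to \bA^1$ making $(T, Z_0, h_0, k)$ a d-critical chart centered at $x$, where $T = (U \cap S) \cap k^{-1}(Z_0)$. To build $\Phi \colon V \to Z_0$, I would use smoothness of $W$ to lift $j \circ i^{-1} \colon i(U) \to W$ to a smooth morphism $\sigma_V \colon V \to W$ defined near $i(x)$ with $\sigma_V \circ i = j$ on $U$; its graph $v \mapsto (v, \sigma_V(v))$ is then a locally closed embedding extending $k$ over $T$, and lands in $Z_0$ after further shrinking. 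The embedding $\Psi \colon W \to Z_0$ is constructed symmetrically via a smooth $\sigma_W \colon W \to V$.

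The main technical obstacle is to arrange the \emph{strict} equalities $f = h \circ \Phi$ and $g = h \circ \Psi$, not merely equality modulo the square of the ideal of $T$. Since $h_0$, $f \circ \mathrm{pr}_V|_{Z_0}$, and $g \circ \mathrm{pr}_W|_{Z_0}$ all represent the same section $s|_T \in \Gamma(T, \sS_M)$ by construction, the discrepancies $f - h_0 \circ \Phi$ and $g - h_0 \circ \Psi$ automatically lie in $I_{U,V}^2$ and $I_{S,W}^2$ respectively. Modifying $h_0$ by an element of $I_{T, Z_0}^2$ preserves the critical locus and the class in $\oO_{Z_0}/I_T^2$, hence preserves the d-critical chart data; this freedom would let me kill one discrepancy. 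To kill both simultaneously, the plan is to stabilize further by replacing $Z_0$ with $Z = Z_0 \times \bA^m$ and $h_0$ with $h_0 + q$ for a non-degenerate quadratic form $q$ in the $\bA^m$-coordinates (leaving the critical locus unchanged), and then add a correction term in $I_{T, Z}^2$ chosen using the extra $\bA^m$-directions to decouple the two matching conditions. With $\Phi$ and $\Psi$ composed with the zero section $Z_0 \hookrightarrow Z_0 \times \bA^m = Z$, the chart $(T, Z, h, k)$ has all the required properties. The quadratic stabilization is precisely the classical mechanism for stable equivalence of singularity germs and is the technical core of Joyce's formalism; writing out the explicit correction in formal coordinates at $x$ is where the routine but delicate computation lives.
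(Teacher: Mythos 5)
The paper does not give its own proof of this proposition; it is quoted verbatim from Joyce, \cite[Theorem~2.20]{JoyceDCrit}, so the comparison is against Joyce's argument. Your outline — form the diagonal embedding into $V \times W$, apply Proposition~\ref{modify embedd} to get a chart $(T,Z_0,h_0,k)$, build $\Phi$ and $\Psi$ via graphs of extensions of $j\circ i^{-1}$ and $i\circ j^{-1}$, observe that the resulting discrepancies lie in the square of the ideal, and then stabilize by a non-degenerate quadratic form in extra $\bA^m$-coordinates — is indeed the standard product-plus-stabilization mechanism at the heart of Joyce's proof, and in spirit it is the right argument.

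There is, however, a concrete flaw in the final step as you have written it. You propose to take $h = h_0 + q + (\text{correction in } I^2_{T,Z})$ and then compose $\Phi$ and $\Psi$ with the zero section $Z_0 \hookrightarrow Z_0 \times \bA^m$. But any term involving the $\bA^m$-coordinates vanishes identically on the zero section, so on the image of $\Phi$ one still has $h\circ\Phi = h_0\circ\Phi_0$, and the discrepancy $f - h_0\circ\Phi_0 \in I_U^2$ is left exactly where it was. Adding $q$ and a correction supported in the $\bA^m$-directions does nothing if the embeddings never leave the zero section. The correct mechanism is the opposite: keep $h = h_0 + q$ (no further correction needed), and give $\Phi$ genuine $\bA^m$-components $\phi_1,\dots,\phi_{m_1} \in I_{U,V}$ chosen so that $q(\phi_1,\dots,\phi_{m_1},0,\dots,0)$ equals $f - h_0\circ\Phi_0$, which is possible because any element of $I_U^2$ is a finite sum of products $a_\ell b_\ell$ with $a_\ell,b_\ell \in I_U$, and $a b = \tfrac14(a+b)^2 - \tfrac14(a-b)^2$, so a quadratic form of signature $(m_1/2,m_1/2)$ absorbs it. Doing the same for $\Psi$ with a disjoint block of coordinates $\phi_{m_1+1},\dots,\phi_m \in I_{S,W}$ decouples the two matching conditions. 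Since the $\phi_\ell$ lie in the respective ideals, $\Phi\circ i$ and $\Psi\circ j$ still agree with $k$ on $T$, and $\Crit(h_0+q) = \Crit(h_0)\times\{0\}$ so the chart is unchanged. One minor further point: $\sigma_V \colon V \to W$ extending $j\circ i^{-1}$ is just a morphism defined near $i(x)$ using smoothness (affineness) of $W$; it will generally not be a smooth morphism, e.g. when $\dim V < \dim W$.
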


\subsection{Equivariant d-critical loci} For our purposes, we need equivariant analogues of the results of Subection~\ref{d-crit section}. The theory works in parallel as before (cf. \cite[Section~2.6]{JoyceDCrit}).

\begin{defi} \emph{(Good action)} Let $G$ be an algebraic group acting on a scheme $M$. We say that the action is good if $M$ has a cover $\lbrace U_\alpha \rbrace_{\alpha \in A}$ where every $U_\alpha \subseteq M$ is an invariant open affine subscheme of $M$.
\end{defi}

\begin{rmk}
If $M$ is obtained by GIT so that it is the semistable locus of a projective scheme with a linearized $G$-action, then the action of $G$ on $M$ is good.
\end{rmk}

It is straightforward to extend Definitions 2.1, 2.2 and 2.3 and Proposition~\ref{modify embedd} in the equivariant setting (cf. \cite[Definition~2.40]{JoyceDCrit}).

\begin{prop} \emph{\cite[Remark~2.47]{JoyceDCrit}} \label{equivariant d-crit prop}
Let $G$ be a complex reductive group with a good action on a scheme $M$. Suppose that $(M,s)$ is an invariant d-critical scheme. Then the following hold:
\begin{enumerate}
\item For any $x \in M$ fixed by $G$, there exists an invariant d-critical chart $(U,V,f,i)$ centered at $x$, i.e. an invariant open affine $U \ni x$, a smooth scheme $V$ with a $G$-action, an invariant regular function $f \colon V \to \bA^1$ and an equivariant embedding $i \colon U \to V$ so that $U = \Crit(f) \subseteq V$.
\item Let $(U,V,f,i)$ and $(S,W,g,j)$ be two invariant d-critical charts centered at the fixed point $x \in M$. Then, after possibly shrinking $V$ and $W$ around $x$, there exists an invariant d-critical chart $(T,Z,h,k)$ centered at $x$ and equivariant embeddings $\Phi : (U,V,f,i) \to (T,Z,h,k)$, $\Psi : (S,W,g,j) \to (T,Z,h,k)$.
\end{enumerate}
\end{prop}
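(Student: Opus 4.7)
My plan is to reduce both statements to their non-equivariant analogs (Propositions~\ref{modify embedd} and~\ref{d-crit chart comparison}) using two standard consequences of $G$ being reductive with a good action on $M$: (i) every $G$-invariant affine open admits a $G$-equivariant closed embedding into a linear $G$-representation $\bA^n$, obtained by extracting a finite-dimensional $G$-stable subspace of algebra generators; and (ii) the Reynolds operator $\mathcal R$ averages regular functions to $G$-invariants while preserving classes modulo any $G$-invariant ideal and commuting with pullback along equivariant morphisms. Because shrinking a Zariski-open neighborhood of $x$ to a $G$-invariant one is not always directly possible (the $G$-saturation of a closed subset need not be closed), the averaging is carried out on the formal completion at $x$ and then algebraized by an equivariant Artin approximation argument, available because $G$ is reductive.

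For part (1), I would choose a $G$-invariant affine open $U \ni x$ in $M$ and fix an equivariant closed embedding $i \colon U \hookrightarrow V_0 := \bA^n$ with $i(x) = 0$. Non-equivariant Proposition~\ref{modify embedd} produces a (not necessarily invariant) d-critical chart $(U', V', f', i|_{U'})$ with $V' \subseteq V_0$ Zariski open. Pulling $f'$ back to the formal completion $\hat V_0$ of $V_0$ at $0$ and applying $\mathcal R$ yields a $G$-invariant formal function $\hat f$. Because the d-critical section on $U$ is $G$-invariant and $I_U$ is $G$-stable, $g^* f' - f' \in I_U^2$ for every $g \in G$, so $\hat f - \hat f'$ lies in $\widehat{I_U}^2$; hence $\hat f$ still represents the prescribed d-critical structure, and a derivative computation gives $\Crit(\hat f) = \hat U$ formally at $x$. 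Equivariant Artin approximation then produces a $G$-invariant open $V \subseteq V_0$ through $x$ and an invariant regular function $f \in \Gamma(V, \oO_V)^G$ whose germ at $x$ matches $\hat f$; since $\Crit(f)$ is automatically $G$-invariant, shrinking $V$ (and $U$ correspondingly) to exclude any spurious components of $\Crit(f)$ yields the desired invariant chart $(U, V, f, i)$.

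For part (2), I would mimic Joyce's non-equivariant refinement construction equivariantly. After restricting both charts to a common $G$-invariant affine open in $M$, take $Z_0 := V \times W$ with the diagonal $G$-action, equipped with the equivariant embedding $k_0 = (i, j)$. A non-equivariant application of Proposition~\ref{d-crit chart comparison} furnishes a regular function $h_0$ on an open of $Z_0$ along with embeddings $\Phi_0 \colon V \to Z_0$ and $\Psi_0 \colon W \to Z_0$ satisfying $f = h_0 \circ \Phi_0$ and $g = h_0 \circ \Psi_0$. Averaging the formal germ of $h_0$ at $k_0(x)$ via $\mathcal R$ and algebraizing by equivariant Artin approximation, exactly as in part (1), produces an invariant d-critical chart $(T, Z, h, k_0)$; the equivariant analogues $\Phi, \Psi$ of $\Phi_0, \Psi_0$ are constructed using $G$-stable complements to the tangent spaces of $V, W$ in the ambient linear representations.

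The main difficulty lies in reconciling the averaging step with the strict pull-back identities $f = h \circ \Phi$ and $g = h \circ \Psi$ required by the definition of a chart embedding, since Reynolds averaging only preserves the d-critical structure modulo $I_T^2$, not pointwise on embedded subschemes. I would resolve this by arranging $\Phi$ and $\Psi$ to be $G$-equivariant from the outset so that $\Phi^*$ and $\Psi^*$ intertwine the Reynolds operators on source and target, and then correcting $h$ within its class modulo $I_T^2$ by a $G$-invariant element chosen to enforce both identities simultaneously; existence of such a correction follows from solvability of the resulting linear system within $\Gamma(Z, I_T^2)^G$, which is guaranteed by complete reducibility of $G$-representations.
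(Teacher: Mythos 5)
The paper provides no proof of this proposition; it is quoted from \cite[Remark~2.47]{JoyceDCrit}, so there is no internal argument to compare against. Your overall strategy --- reduce to the non-equivariant Propositions~\ref{modify embedd} and~\ref{d-crit chart comparison}, embed equivariantly into a linear representation, and repair invariance by Reynolds averaging at the fixed point --- is the natural one, and you are correct that one cannot in general shrink a Zariski neighborhood of a fixed point to a $G$-invariant one (take $\bA^2 \setminus \{(1,0)\}$ with $\bC^*$ acting with weights $(1,-1)$). Nonetheless, two steps are genuine gaps. You appeal to ``equivariant Artin approximation'' without identifying a precise statement, and you do not verify that the closed conditions defining a d-critical chart --- the schematic identity $U = \Crit(f)$ and the equality $f + I_U^2 = s|_U$ inside $\sS_M$ --- survive the approximation rather than merely holding to high finite order; approximation results generically return \'{e}tale-local, not Zariski-open, data, and this step needs much more care than the single sentence you give it.

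For part (2), the difficulty you single out (keeping the strict identities $f = h\circ\Phi$, $g = h\circ\Psi$ after averaging) in fact disappears once $\Phi, \Psi$ are $G$-equivariant and $f, g$ invariant: one has $\mathcal{R}(h)\circ\Phi = \Phi^*\mathcal{R}(h) = \mathcal{R}(\Phi^* h) = \mathcal{R}(f) = f$ automatically, precisely by the intertwining of Reynolds operators you invoke, so the proposed correction inside $\Gamma(Z, I_T^2)^G$ is superfluous. The actual gaps are elsewhere, in a clause you dispatch without argument: (i) producing $G$-equivariant embeddings $\Phi, \Psi$ from the non-equivariant $\Phi_0, \Psi_0$ furnished by Proposition~\ref{d-crit chart comparison}, while preserving the constraint that they extend the inclusions $U, S \hookrightarrow T$ --- averaging a scheme morphism is not meaningful unless the target is (an open in) a linear $G$-representation and the averaged map is checked to remain a locally closed embedding; and (ii) showing that $T = \Crit(\mathcal{R}(h))$ after averaging, since $\mathcal{R}(h) - h \in \Gamma(Z, I_T^2)$ preserves the inclusion $T \subseteq \Crit(\mathcal{R}(h))$ but may enlarge the critical locus, which returns you exactly to the invariant-shrinking problem you flagged in part (1).
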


\begin{rmk} \label{Rmk d-crit}
If $G$ is a torus $\left( \bC^{\ast} \right)^k$, then Proposition \ref{equivariant d-crit prop} is true without the assumption that $x$ is a fixed point of $G$.
\end{rmk}

\begin{rmk}\label{4.11}
There is a notion of d-critical locus for Artin stacks $\mM$ (cf. \cite[Section~2.8]{JoyceDCrit}). Then (cf. \cite[Example~2.55]{JoyceDCrit}) d-critical structures on quotient stacks $\left[ M / G \right]$ are in bijective correspondence with invariant d-critical structures on $M$.

Moreover, one may pull back d-critical structures along smooth morphisms between stacks.
\end{rmk}

\section{Semi-perfect Obstruction Theory} \label{SemiPerf}

This section contains necessary material about semi-perfect obstruction theories, as developed in \cite{LiChang}.

Let $U \to C$ be a morphism, where $U$ is a scheme of finite type and $C$ a smooth quasi-projective curve. We first recall the definition of 
perfect obstruction theory \cite{BehFan, LiTian}.

\begin{defi} \emph{(Perfect obstruction theory \cite{BehFan})} \label{Perf obs th}
A (truncated) perfect (relative) obstruction theory consists of a morphism $\phi \colon E \to L_{U/C}^{\geq -1}$ in $D^b(\Coh U)$ such that
\begin{enumerate}
\item $E$ is of perfect amplitude, contained in $[-1,0]$.
\item $h^0(\phi)$ is an isomorphism and $h^{-1}(\phi)$ is surjective.
\end{enumerate}
We refer to $\Ob_\phi := \Hone(E^\vee)$ as the obstruction sheaf of $\phi$.
\end{defi}

\begin{defi} \emph{(Infinitesimal lifting problem)} \label{Inf lift prob}
Let $\iota \colon \Delta \to \bar{\Delta}$ be an embedding with $\bar{\Delta}$ local Artinian, such that $I \cdot \fm = 0$ where $I$ is the ideal of $\Delta$ and $\fm$ the closed point of $\bar{\Delta}$. We call $(\Delta, \bar{\Delta}, \iota, \fm)$ a small extension. Given a commutative square
\begin{align}
\xymatrix{
\Delta \ar[r]^g \ar[d]^\iota & U \ar[d]\\
\bar{\Delta} \ar[r] \ar@{-->}[ur]_{\bar{g}} & C
}
\end{align}
such that the image of $g$ contains a point $p \in U$, the problem of finding $\bar{g} \colon \bar{\Delta} \to U$ making the diagram commutative is the ``infinitesimal lifting problem of $U/C$ at $p$".
\end{defi}

\begin{defi} \emph{(Obstruction space)} \label{Obs spaces}
For a point $p \in U$, the intrinsic obstruction space to deforming $p$ is $T_{p, U/ C}^1 := \Hone \left( (L_{U/C}^{\geq -1})^\vee \vert_p \right)$. The obstruction space with respect to a perfect obstruction theory $\phi$ is $\OB(\phi,p) := \Hone( E^\vee \vert_p )$.
\end{defi}

Given an infinitesimal lifting problem of $U/C$ at a point $p$, there exists by the standard theory of the cotangent complex a canonical element 
\begin{align}
\omega \left( g, \Delta, \bar{\Delta} \right) \in \Ext^1 \left( g^{\ast} L_{U/C}^{\geq -1} \vert_p, I\right) = T^1_{p, U/C} \otimes_\bC I
\end{align} 
whose vanishing is necessary and sufficient for the lift $\bar{g}$ to exist. 

\begin{defi} \emph{(Obstruction assignment)} \label{Obs assignment}
For an infinitesimal lifting problem of $U / C$ at $p$ and a perfect obstruction theory $\phi$ the obstruction assignment at $p$ is the element
\begin{align}
ob_U(\phi,g,\Delta,\bar{\Delta}) = h^1(\phi^\vee) \left( \omega \left( g, \Delta, \bar{\Delta} \right) \right) \in \OB(\phi,p) \otimes_\bC  I.
\end{align}
\end{defi}

Suppose now that $U$ is given by the vanishing of a global section $s \in \Gamma \left( V, F \right)$ where $F$ is a vector bundle on a scheme $V$ which is smooth over $C$. Let $J$ denote the ideal sheaf of $U$ in $V$ and $j \colon U \to V$ the embedding. Then we have a perfect obstruction theory given by the diagram
\begin{align}
\xymatrix{
E \ar@{=}[r] \ar[d]^-\phi & [ F^\vee \vert_U \ar[r]^-{d_{V/C} s^\vee} \ar[d]^-{s^\vee} & \Omega_{V/C} \vert_U \ar@{=}[d] ]\\
L_{U/\mM}^{\geq -1} \ar@{=}[r] & [ J/J^2 \ar[r]^-{d_{V/C}} & \Omega_{V/C} \vert_U ].
}
\end{align}

Since $V$ is smooth over $C$ we can find a lift $g' \colon \bar{\Delta} \to V$ of the composition $j \circ g$. Composing with the section $s \colon V \to F$ we obtain a morphism $s \circ g'  \colon \bar{\Delta} \to (g')^\ast F$. Since $g = g' \vert_\Delta$ factors through $U$, we must have $s \circ g' \in I \otimes_\bC F \vert_p$.

Let $\rho \colon I \otimes_\bC F \vert_p \to I \otimes_\bC \Ob_\phi \vert_p = I \otimes_\bC \OB(\phi,p)$ be the natural projection map.

\begin{lem} \emph{\cite[Lemma~1.28]{KiemLiCat}} \label{compute obs assign}
$ob_U(\phi,g,\Delta,\bar{\Delta}) = \rho \left( s \circ g'\right)$.
\end{lem}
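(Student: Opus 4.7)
The plan is to unpack the definition of the intrinsic obstruction $\omega(g, \Delta, \bar{\Delta}) \in T^1_{p, U/C} \otimes_\bC I$ using a concrete cocycle representative, apply $h^1(\phi^\vee)$ to it, and identify the result with $\rho(s \circ g')$. The geometric input is that $V/C$ is smooth, which both produces the lift $g'$ and gives the standard 2-term representatives of the complexes involved.

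First I would set up the 2-term models: $L_{U/C}^{\geq -1} = [J/J^2 \xrightarrow{d_{V/C}} \Omega_{V/C}|_U]$ with dual at $p$ given by $[T_{V/C}|_p \to (J/J^2)^\vee|_p]$, whose $H^1$ realizes $T^1_{p, U/C}$. Next I would write down an explicit cocycle representative of $\omega$: the ring map $(g')^\ast \colon \oO_V \to \oO_{\bar{\Delta}}$ sends $J$ into $I$, because $g'|_\Delta$ factors through $U = (J=0)$; since $I \cdot \fm = 0$ forces $I^2 = 0$, it descends to a map $J/J^2 \to I$, and evaluating at $p$ produces an element of $\Hom((J/J^2)|_p, I) \simeq (J/J^2)^\vee|_p \otimes_\bC I$, well-defined modulo the coboundary coming from $T_{V/C}|_p \otimes_\bC I$, which is precisely the ambiguity in the choice of $g'$.

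Then I would apply $h^1(\phi^\vee)$. Since $\phi^{-1} = s^\vee \colon F^\vee|_U \to J/J^2$ is the map induced by the section $s$, its transpose $(J/J^2)^\vee \to F|_U$ composes with the cocycle above to give the map $F^\vee|_p \to I$ sending $\alpha \mapsto \langle \alpha, s \rangle \circ g' = \alpha(s \circ g')$. Under the identification $\Hom(F^\vee|_p, I) \simeq F|_p \otimes_\bC I$, this is exactly the element $s \circ g'$; taking its class modulo $\im(d_{V/C} s|_p \colon T_{V/C}|_p \to F|_p)$ yields $\rho(s \circ g') \in \Ob_\phi|_p \otimes_\bC I$, which establishes the equality.

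The main obstacle will be justifying that this explicit cocycle genuinely represents the intrinsic obstruction class $\omega(g,\Delta,\bar\Delta)$ coming from the cotangent-complex formalism of \cite{BehFan, LiTian}, and not merely a natural construction with the right shape. The cleanest way to do this is to identify $\omega$ with the cohomology class of the Kodaira--Spencer-style morphism $g^\ast L_{U/C}^{\geq -1} \to I[1]$ induced by $g'$ together with the small extension $0 \to I \to \oO_{\bar{\Delta}} \to \oO_\Delta \to 0$; the smoothness of $V$ over $C$ is precisely what allows the lift $g'$ to exist and guarantees that the entire obstruction is captured by the $J/J^2$ part of the 2-term model, making the bookkeeping above match the abstract Ext class.
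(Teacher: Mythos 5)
Your proposal is correct and proceeds along essentially the same lines as the paper's proof: both use the smoothness of $V/C$ to produce the lift $g'$ and to kill the $\Omega_{V/C}$-component of the obstruction (the paper phrases this as $ob_V = 0$ together with the long exact sequence from the triangle $g^*\Omega_{V/C} \to g^*E \to g^*F^\vee[1]$, you phrase it as the intrinsic class being supported on the $J/J^2$ factor), and then both identify the residual map $F^\vee|_p \to I$ with $\alpha \mapsto \alpha(s\circ g')$ via $\phi^{-1} = s^\vee$. The only difference is bookkeeping — a diagram chase in the derived category versus an explicit cocycle computation — and the ``main obstacle'' you flag (identifying the explicit cocycle with the Kodaira--Spencer morphism $g^*L_{U/C}^{\geq -1}\to I[1]$) is precisely what the paper's first commutative diagram records.
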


\begin{proof}
$ob_U(\phi,g,\Delta,\bar{\Delta})$ is given by the composition
\begin{align}
g^\ast E \lr g^\ast L_{U/ C}^{\geq -1} \lr L_{\Delta / \bar{\Delta}}^{\geq -1} \lr I[1].
\end{align}
This fits into a commutative diagram
\begin{align}
\xymatrix{
g^* \Omega_{V / C} \ar@{=}[r] \ar[d] & g^* \Omega_{V / C} \ar[d] \ar[dr]^-{ob_V}\\
g^* E \ar[r]^-\phi \ar[d] & g^* L_{U / C}^{\geq -1} \ar[r] & I[1] \\
g^* {F^\vee[1]} \ar@<-0.6ex>@{-->}[urr]
}
\end{align}
Since $V$ is smooth over $C$, the map $ob_V$ must be zero. Using the distinguished triangle of the first column, we get a long exact sequence in cohomology
\begin{gather} \label{long exact}
\Hom(g^* \Omega_{V / C}, I) = I \otimes_\bC T_{V/C} \vert_p \xrightarrow{\left( d_{V/C} s^\vee \right)^\vee} \Hom(g^* F^\vee, I) = I \otimes_\bC F \vert_p\\ \notag
\lr \Ext^1(g^* E, I) \lr \Ext^1(g^* \Omega_{V / C}, I).
\end{gather}
Now, the fact that $ob_V$ is zero implies that $ob_U(\phi,g,\Delta,\bar{\Delta})$ lies in the cokernel $I \otimes_\bC \OB(\phi,p)$ of the map $(d_{V/C} s^\vee)^\vee$ in \eqref{long exact}. It is now easy to see using the diagram
\begin{align*}
\xymatrix{
g^* \Omega_{V /C} \vert_U \ar[r] \ar@{=}[d] & g^* E \ar[r] \ar[d]^-{g^* \phi} & g^* F^\vee |_U [1] \ar[dr]^-{(g')^*s^\vee|_\Delta} \ar[d]\\
g^* L_{V / C}^{\geq -1} \vert_U \ar[r] \ar[d] & g^* L_{U /C}^{\geq -1} \ar[d] \ar[r] & g^* L_{U/V}^{\geq -1} \ar[r] \ar[d]  & g^* J / J^2 [1] \ar[d] \\
L_{\bar{\Delta} / C}^{\geq -1} \vert_\Delta \ar[r] & L_{\Delta / C}^{\geq -1} \ar[r] & L_{\Delta / \bar{\Delta}}^{\geq -1} \ar[r] & I[1],
}
\end{align*}
that indeed $ob_U(\phi,g,\Delta,\bar{\Delta}) = \rho( s \circ g' )$.
\end{proof}

\begin{defi} \label{Same obs assign}
Let $\phi \colon E \to L_{U / C}^{\geq -1}$ and $\phi' \colon E' \to L_{U / C}^{\geq -1}$ be two perfect obstruction theories and $\psi \colon \Ob_\phi \to \Ob_{\phi'}$ be an isomorphism. We say that the obstruction theories give the same obstruction assignment via $\psi$ if for any infinitesimal lifting problem of $U/C$ at $p$
\begin{align}
\psi \left( ob_U(\phi,g,\Delta,\bar{\Delta}) \right) = ob_U(\phi',g,\Delta,\bar{\Delta}) \in \OB(\phi',p) \otimes_\bC I.
\end{align}
\end{defi}
We are now ready to give the definition of a semi-perfect obstruction theory.

\begin{defi} \emph{(Semi-perfect obstruction theory \cite{LiChang})} \label{semi-perfect obs th}
Let $\mM\to C$ be a morphism, where $\mM$ is a DM stack, proper over $C$, of finite presentation and $C$ is a smooth quasi-projective curve. A semi-perfect obstruction theory $\phi$ consists of an \'{e}tale covering $\lbrace U_\alpha \rbrace_{\alpha \in A}$ 
of $\mM$ and perfect obstruction theories $\phi_\alpha \colon E_\alpha \to L_{U_\alpha / C}^{\geq -1}$ such that
\begin{enumerate}
\item For each pair of indices $\alpha, \beta$, there exists an isomorphism \begin{align*}
\psi_{\alpha \beta} \colon \Ob_{\phi_\alpha} \vert_{U_{\alpha\beta}} \lra \Ob_{\phi_\beta} \vert_{U_{\alpha\beta}}
\end{align*}
so that the collection $\lbrace \Ob_{\phi\lalp}, \psi\lab \rbrace$ gives descent data of a sheaf on $\mM$.
\item For each pair of indices $\alpha, \beta$, the obstruction theories $E_\alpha \vert_{U_{\alpha \beta}}$ and $E_\beta \vert_{U_{\alpha \beta}}$ give the same obstruction assignment via $\psi_{\alpha \beta}$ (as in Definition \ref{Same obs assign}).
\end{enumerate}
\end{defi}

\begin{rmk}
The obstruction sheaves $\lbrace \Ob_{\phi\lalp} \rbrace_{\alpha \in A}$ glue to define a sheaf $\Ob_{\phi}$ on $\mM$. This is the obstruction sheaf of the semi-perfect obstruction theory $\phi$.
\end{rmk}

Suppose now that $\mM \to C$ is as above and admits a semi-perfect obstruction theory. Then, for each $\alpha \in A$, we have
\begin{align*}
\cC_{U\lalp/C} \subset N_{U\lalp/C} = h^1/h^0 ( (L_{U\lalp/C}^{\geq -1})^\vee ) \lhook\joinrel\xrightarrow{h^1/h^0(\phi\lalp^\vee)} h^1/h^0 ( E\lalp^\vee ) \lr h^1(E\lalp^\vee),
\end{align*}
where $\cC_{U\lalp/C}$ and $N_{U\lalp/C}$ denote the intrinsic normal cone stack and intrinsic normal sheaf stack respectively, where by abuse of notation we identify a sheaf $\fF$ on $\mM$ with its sheaf stack.

We therefore obtain a cycle class $[\fc_{\phi \lalp}] \in Z_* \Ob_{\phi\lalp}$ by taking the pushforward of the cycle $[\cC_{U\lalp/C}] \in Z_* N_{U\lalp/C}$. 

\begin{thm-defi} \emph{\cite[Theorem-Definition~3.7]{LiChang}} \label{cone}
Let $\mM$ be a DM stack, proper over $C$, of finite presentation and $C$ a smooth quasi-projective curve, such that $\mM \to C$ admits a semi-perfect obstruction theory $\phi$. The classes $[\fc_{\phi \lalp}] \in Z_* \Ob_{\phi\lalp}$ glue to define an intrinsic normal cone cycle $[\fc_\phi] \in Z_* \Ob_\phi$. Let $s$ be the zero section of the sheaf stack $\Ob_\phi$. The virtual cycle of $\mM$ is defined to be
\begin{align*}
[\mM, \phi]^{\mathrm{vir}} := s^{!} [\fc_\phi] \in A_* \mM,
\end{align*}
where $s^{!} \colon Z_* \Ob_\phi \to A_* \mM$ is the Gysin map. This virtual cycle satisfies all the usual properties, such as deformation invariance.
\end{thm-defi}

\begin{rmk} \label{5.10}
One can also consider \'{e}tale covers of $\mM$ by DM quotient stacks $[U\lalp / G\lalp]$, where $G\lalp$ acts on $U\lalp$ with finite stabilizers. There is a natural generalization of the notion of semi-perfect obstruction theory and Theorem~\ref{cone} in this setting. This will be used in Section~\ref{gendt} in order to glue the intrinsic normal cone cycles obtained by perfect obstruction theories on a cover of this form.
\end{rmk}

\section{Local Calculations} \label{LoCalc}

In this section, we establish terminology and collect a series of lemmas and propositions that will be useful in defining the semi-perfect obstructon theory of the intrinsic stabilizer reduction $\tilde{\mM}$ of a GIT quotient stack $\mM = [ X / G]$ with a d-critical structure $s \in \Gamma(\mM, \sS_\mM)$.

This is necessary as the Kirwan blowup of a d-critical quotient stack does not have to be d-critical, so we have to work with a local model that is more general than an invariant d-critical chart and whose structure is preserved by Kirwan blowups and \'{e}tale slices. At the same time, this local model should be equipped with a natural perfect complex which will provide us with a perfect obstruction theory when the Kirwan blowup procedure terminates. 

In addition, we define appropriate notions of compatibility between these local models, generalizing the concept of an embedding between d-critical charts, which we call $\Omega$-equivalence and $\Omega$-compatibility. $\Omega$-equivalence allows us to compare between local models with the same ambient smooth scheme, while $\Omega$-compatibility addresses the possibility of enlarging the ambient smooth scheme by adding extra coordinates. This comparison of local models is preserved under Kirwan blowups and \'{e}tale slices and implies suitable compatibilities between their associated perfect obstruction theories, which are essential in constructing the semi-perfect obstruction theory of $\tilde{\mM}$ in Section~\ref{gendt}.

\subsection{Local models and standard forms} \label{inductive step} Let $V$ be a smooth affine $G$-scheme. The action of $G$ on $V$ induces a morphism $\fg \otimes \oO_V \rightarrow T_V$ and its dual $\sigma_V : \Omega_V \rightarrow \fg^{\vee} \otimes \oO_V$. 

We consider the following data on $V$.

\begin{setup-def} \label{ind hyp}
The quadruple $(V, F_V, \omega_V, D_V)$, where $F_V$ is a $G$-equivariant vector bundle on $V$, $\omega_V$ an invariant section, with scheme theoretic zero locus $U = (\omega_V = 0) \sub V$, and $D_V \sub V$ an effective invariant divisor, satisfying:
\begin{enumerate}
\item $\sigma_V(-D_V)  : \Omega_V ( -D_V ) \to \fg^\vee (-D_V)$
factors through a morphism $\phi_V$ as shown
\begin{align} \label{5.23}
\Omega_V(-D_V) \lra F_V \xrightarrow{\phi_V} \fg^\vee(-D_V);
\end{align}
\item The composition $\phi_V \circ \omega_V$ vanishes identically;
\item Let $R$ be the identity component of the stabilizer group of a closed point in $V$ with closed orbit. Let $V^R$ denote the fixed point locus of $R$. Then $\phi_V |_{V^R}$ composed with the projection $\fg^\vee(-D_V) \rightarrow \fr^\vee(-D_V)$ is zero, where $\fr$ is the Lie algebra of $R$;
\end{enumerate}
gives rise to data $$\Lambda_V = (U, V, F_V, \omega_V, D_V, \phi_V)$$ on $V$. We say that these data give a weak local model structure for $V$. We also say that $U$ is in weak standard form.
\end{setup-def}

\begin{rmk}
Note that if $f \colon V \to \bA^1$ is a $G$-invariant function on $V$, then $(U, V, \Omega_V, df, 0, \sigma_V)$ give a weak local model for $V$, being equivalent to an invariant d-critical chart $(U,V,f,i)$ for $U$. Therefore, an invariant d-critical locus is a particular case of weak standard form.
\end{rmk}

The next lemma states that the structure of a weak local model behaves well under taking Kirwan blowups and slices thereof.

\begin{lem} \label{Obs bundle for blow} Let $\Lambda_V = (U, V, F_V,\omega_V,D_V, \phi_V)$ be as in Setup~\ref{ind hyp}. Let $\pi: \hV\to V$ be the Kirwan blowup of $V$ associated with $G$. Then 
we have induced data $\Lambda_{\hV} = (\hat U, \hat V, F_{\hV}, \omega_{\hV}, D_{\hV}, \phi_{\hV})$, where $F_{\hV}$ is the blowup bundle (cf. Definition~\ref{blow-up bundle}) of $F_V$, $\omega_{\hV}$ the blowup section and $D_{\hV}=\pi\sta D_V+2E$, that give a weak local model structure for $\hV$.

Moreover, for a slice $S$ of a closed point $x$ in ${\hV}$ with closed $G$-orbit and stabilizer $H$, we obtain induced data $\Lambda_S = (T, S, F_S, \omega_S, D_S, \phi_S)$, where $F_S$ is an $H$-equivariant bundle on $S$ with a section $\omega_S$ and the conditions of Setup~\ref{ind hyp} are satisfied for $S$ as well.
\end{lem}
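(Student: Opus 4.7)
The plan is to canonically construct $\phi_{\hV}$, verify the three conditions of Setup~\ref{ind hyp} for the induced data on $\hV$, and then descend the structure to the slice $S$. By Definition~\ref{blow-up bundle} and Proposition~\ref{coinc} the bundle $F_{\hV}$, the section $\omega_{\hV}$, and $\hat U=(\omega_{\hV}=0)$ are already at hand, and we declare $D_{\hV}=\pi^\ast D_V+2E$; the main content is thus to produce $\phi_{\hV}$ and check compatibility.

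To construct $\phi_{\hV}$, I would pull back $\phi_V$ to get $\pi^\ast\phi_V\colon\pi^\ast F_V\to\fg^\vee(-\pi^\ast D_V)$. Applying condition~(3) of Setup~\ref{ind hyp} with $R=G$ forces $\phi_V|_{V^G}=0$, so $\pi^\ast\phi_V$ vanishes along $E$ and, after dividing by the tautological section $\xi$ of $\oO_{\hV}(E)$, factors as $\tilde\phi\colon\pi^\ast F_V\to\fg^\vee(-\pi^\ast D_V-E)$. Restricting $\tilde\phi$ to the subsheaf $F_{\hV}\subset\pi^\ast F_V$, I use the identification $F_{\hV}|_E$ = pullback of $F_V|_{V^G}^{fix}$. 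A $G$-equivariant argument then applies: the first-order normal jet of $\phi_V$ along $V^G$ sends a fixed summand into the moving part of $\fg^\vee$, and this moving part is absorbed by the $G$-equivariant structure of $\oO_{\hV}(E)|_E=\oO_E(-1)$, whose $G$-weights are the moving weights of $N_{V^G/V}$. A second division by $\xi$ then produces $\phi_{\hV}\colon F_{\hV}\to\fg^\vee(-\pi^\ast D_V-2E)=\fg^\vee(-D_{\hV})$.

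Next I would verify the three conditions for $\Lambda_{\hV}$. Condition~(2), $\phi_{\hV}\circ\omega_{\hV}=0$, follows from pulling back $\phi_V\circ\omega_V=0$ and tracking the $\xi$-divisions defining $\omega_{\hV}$ and $\phi_{\hV}$. Condition~(1) amounts to producing a factor $\Omega_{\hV}(-D_{\hV})\to F_{\hV}$ of $\sigma_{\hV}(-D_{\hV})$ through $\phi_{\hV}$; starting from the factor on $V$ given by condition~(1), I would pull back and use the conormal sequence $\oO(-E)|_E\hookrightarrow\Omega_{\hV}|_E$ together with the $G$-equivariant identification of $N_{V^G/V}$ with the moving part of $T_V|_{V^G}$ to generate the required twists. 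Condition~(3) needs checking only for $R\in\fR(\hV)=\fR(V)\setminus\{G\}$, since the Kirwan blowup eliminates $G$-fixed closed-orbit points; for such $R$ the fixed locus $\hV^R$ lies in $\pi^{-1}(V^R)$ and the vanishing of $\phi_V|_{V^R}$ under $\fg^\vee(-D_V)\to\fr^\vee(-D_V)$ carries over to $\hV^R$ by naturality.

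For the slice $S$ at a closed point $x\in\hV$ with stabilizer $H$: Luna's slice theorem provides an $H$-equivariant étale morphism $G\times_H S\to\hV$ onto a $G$-invariant neighborhood of $G\cdot x$. I would set $F_S:=F_{\hV}|_S$, $\omega_S:=\omega_{\hV}|_S$, $D_S:=D_{\hV}|_S$, and define $\phi_S\colon F_S\to\fh^\vee(-D_S)$ as the composition of $\phi_{\hV}|_S$ with the $H$-equivariant projection $\fg^\vee\to\fh^\vee$, available because $H$ is reductive so $\fg=\fh\oplus\fh^\perp$ as $H$-representations. Conditions~(1)--(3) for $\Lambda_S$ descend from those on $\hV$ via étale base change: the compatibility $T_x\hV\cong T_xS\oplus\fg/\fh$ as $H$-representations means that $\sigma_S$ is obtained from $\sigma_{\hV}|_S$ by the same projection, so the factorization, the vanishing identity, and the fixed-locus condition all transport. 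The principal obstacle is the equivariant argument producing the $-2E$ twist rather than merely $-E$: one order of vanishing of $\phi_V$ along $V^G$ is given directly by condition~(3), but the second order requires a careful tracking of the moving/fixed decompositions of $\fg^\vee$ and $F_V|_{V^G}$ against the $G$-action on $\oO_{\hV}(E)|_E$, and this is where most of the work will lie.
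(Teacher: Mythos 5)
Your construction of $\phi_{\hV}$ and the verification of conditions (1)--(3) on the Kirwan blowup follow essentially the same strategy as the paper, although the paper organizes the second $\xi$-division more cleanly in a single diagram: it restricts the once-divided map $\pi^*\phi_V\colon\pi^*F_V\to\fg^\vee(-E-D_V)$ to $E$, notes by equivariance that this kills $\pi^*(F_V|_{V^G}^{fix})$ and hence factors through $\pi^*(F_V|_{V^G}^{mv})$, and then takes kernels of the three vertical restriction-to-$E$ maps simultaneously to obtain $\Omega_{\hV}(-2E-D_V)\to F_{\hV}\xrightarrow{\phi_{\hV}}\fg^\vee(-2E-D_V)$ in one stroke. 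Your ``first-order normal jet \dots absorbed by the $G$-weights of $\oO_{\hV}(E)|_E$'' is the same equivariance fact, though stated more loosely; for condition (3) on $\hV$ you should also say that $(\hV)^R$ is the proper transform of $V^R$, so the vanishing on $\hV^R-E$ extends to all of $\hV^R$.

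There is, however, a genuine gap in your treatment of the slice $S$. You set $F_S:=F_{\hV}|_S$, whereas the paper defines $F_S$ to be the quotient $F_{\hV}|_S/(\fg/\fh)^\vee(-D_S)$. Starting from the exact sequence $0\to(\fg/\fh)^\vee\to\Omega_{\hV}|_S\to\Omega_S\to 0$ and using that $\sigma_{\hV}$ restricted to $(\fg/\fh)^\vee$ is the inclusion into $\fg^\vee$, one realizes $(\fg/\fh)^\vee(-D_S)$ as a common subbundle of $\Omega_{\hV}(-D_{\hV})|_S$, $F_{\hV}|_S$ and $\fg^\vee(-D_{\hV})|_S$, and then quotients the entire factorization $\Omega_{\hV}(-D_{\hV})\to F_{\hV}\to\fg^\vee(-D_{\hV})$ restricted to $S$ by this subbundle to obtain $\Omega_S(-D_S)\to F_S\xrightarrow{\phi_S}\fh^\vee(-D_S)$ canonically. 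With your $F_S$, the arrow $\Omega_S(-D_S)\to F_S$ needed in condition (1) cannot be produced naturally, since $\Omega_S$ is a quotient, not a subsheaf, of $\Omega_{\hV}|_S$; you would have to pick a noncanonical equivariant splitting. Moreover your $F_S$ has rank $\dim V$ instead of $\dim S$, so the reduced bundle $F_S^{\rred}=\ker\phi_S$ would have the wrong rank for the zero-dimensional obstruction theory. The short exact sequence $0\to(\fg/\fh)^\vee(-D_S)\to F_{\hV}|_S\to F_S\to 0$ is used explicitly in Lemmas~\ref{comparison for slice}, \ref{6.21} and \ref{passing to a slice red}, so this is not a cosmetic choice.
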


\begin{proof} By pulling back via $\pi : \hat{V} \to V$ the factorization in \eqref{5.23}, we obtain
\begin{align*}
\pi^ *\Omega_V(-D_V) \lra \pi^* F_V \xrightarrow{\pi^* \phi_V} \fg^\vee(-D_V).
\end{align*} 

Let $E\sub\hV$ be the exceptional divisor. By slight abuse of notation, we use $D_V$ to also denote the pull-back of $D_V$ to the blow-up.
Then, applying (3) with  $R=G$, $\pi^* \phi_V$ factors through $\fg^\vee (-E-D_V)$. 
Using the obvious inclusion $\Omega_{\hat{V}}(-E) \rightarrow \pi^* \Omega_V$, we get that the morphism $\Omega_{\hat{V}}(-E-D_V) \rightarrow \fg^\vee (-E-D_V)$ induced by the action of $G$, factors as
\begin{align*}
\Omega_{\hat{V}} (-E-D_V) \lra \pi^* \Omega_V(-D_V) \lra \pi^* F_V \lra \fg^\vee (-E-D_V).
\end{align*}
We have the following diagram 
\begin{align}
\xymatrix{
\Omega_{\hat{V}}(-E-D_V) \ar[r] \ar[d] & \pi^* F_V \ar[r] \ar[d] & \fg^\vee (-E-D_V) \ar[d]\\ 
\Omega_{\hat{V}}(-E-D_V) |_E \ar[r] & \left( \pi^* F_V \right) |_E \ar[r] \ar[d] & \fg^\vee (-E-D_V) |_E\\
 & \pi^* \left( F_V |_{V^G}^{mv} \right) \ar[ur]
}
\end{align}
Note that 
\begin{align}
\left( \pi^* F_V \right) |_E \cong \pi^\ast \left( F_V |_{V^G}^{fix} \right) \oplus \pi^\ast \left( F_V |_{V^G}^{mv} \right)
\end{align}
and by equivariance we see that $\pi^* \phi_V |_E$ maps $\pi^\ast \left( F_V |_{V^G}^{fix} \right)$ to zero inside $\fg^\vee \left( -E-D_V \right) |_E$. This induces the last up-right arrow of the diagram.
Therefore by taking kernels, we get a factorization
\begin{align} \label{5.25}
\sigma_{\hat{V}} : \Omega_{\hat{V}}(-D_{\hat{V}}) \lra F_{\hat{V}} \xrightarrow{\phi_{\hat{V}}} \fg^\vee (-D_{\hat{V}}),
\end{align}
where $D_{\hat{V}} = 2E + D_V$. This shows (1) for $\hat{V}$.

By looking at the diagram, we can easily see that it follows from the identical vanishing of $\phi_V \circ \omega_V$ on $V$ that $\phi_{\hat{V}} \circ \omega_{\hat{V}}$ vanishes identically on $\hat{V}$. This is (2).

Let us check (3) on $\hat{V}$. Away from the exceptional divisor $E$, $\phi_{\hat{V}}$ is the same as $\phi_V$ and hence we have the same vanishing on $\hat{V}^R - E$. On the other hand, on $\hV$, Kirwan's general theory in \cite{Kirwan} guarantees that no new $R$ can arise from the blowup procedure and $(\hat{V})^R$ is the proper transform of $V^R$. It readily follows that, since $\phi_{\hat{V}} |_{(\hat{V})^R}$ composed with the projection onto $\fr^\vee (-D_{\hat{V}})$ is vanishing on $\hat{V} - E$, it vanishes on $(\hV)^R$ as desired.

Next we restrict \eqref{5.25} to a slice $S$ in $\hat{V}$. The fibration $G \times_H S \rightarrow G/H$ with fiber $S$ gives an exact sequence
\begin{align*}
0 \lra (\fg/ \fh)^\vee \lra \Omega_{\hat{V}} |_S \lra \Omega_S \lra 0,
\end{align*}
where $\fh$ is the Lie algebra of $H$. The composition of the first arrow $(\fg / \fh)^\vee \rightarrow \Omega_{\hat{V}} |_S$ with the homomorphism $\sigma_{\hat{V}} : \Omega_{\hat{V}} |_S \rightarrow \fg^\vee$ induced by the action of $G$ is the inclusion $(\fg / \fh)^\vee \hookrightarrow \fg^\vee$. Therefore $(\fg / \fh)^\vee (-D_{\hV}) |_S$ is a subbundle of 
$\Omega_{\hat{V}}(-D_{\hV})|_S$ and $\pi^* F_V |_S$ as well as $\fg^\vee(-D_{\hV})|_S$. If we take the quotient of \eqref{5.25} restricted to $S$ by $(\fg / \fh)^\vee (-D_{\hV})|_S$, we obtain a factorization 
\begin{align}
\sigma_S : \Omega_S(-D_S) \lra F_S \xrightarrow{\phi_S} \fh^\vee(-D_S)
\end{align}
of the morphism $\sigma_S$ induced by the action of $H$ on $S$, where $D_S$ is the restriction of $D_{\hV}$ to $S$. 
This shows (1) for $S$. Finally, it is not hard to verify that (2) and (3) are also true. \end{proof}

We may now give the following definition, which will be useful when we introduce the concept of $\Omega$-compatibility later in the paper.
\begin{defi} \label{local model}
We say that the data $\Lambda_V = (U, V, F_V, \omega_V, D_V, \phi_V)$ give a local model for $V$ if either they are the data of a d-critical chart or are obtained by such after a sequence of Kirwan blowups and/or taking slices of closed points with closed orbit. We also say that $U$ is then in standard form.
\end{defi}

\subsection{Obstruction theory of local model} \label{red obs th sec}
Suppose that $U$ is in weak standard form for data $\Lambda_V$ of a local model on $V$.

\begin{lem} \label{lemma complex}
The following sequence is a complex:
\begin{align}  \label{complex}
K_V = [ \fg \stackrel{\sigma_V^\vee}{\lra} T_V \vert_U \xrightarrow{\left( d_V \omega_V^\vee \right)^\vee} F_V \vert_U \stackrel{\phi_V}{\lra} \fg^\vee(-D_V) ].
\end{align}
\end{lem}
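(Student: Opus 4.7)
The plan is to verify the two compositions separately: first, $(d_V \omega_V^\vee)^\vee \circ \sigma_V^\vee = 0$, and second, $\phi_V \circ (d_V \omega_V^\vee)^\vee = 0$, both as maps on $U$.

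For the first composition, I would use the $G$-invariance of $\omega_V$. The map $\sigma_V^\vee \colon \fg \otimes \oO_U \to T_V|_U$ sends $\xi \in \fg$ to the fundamental vector field $\xi_V$ generated by $\xi$ via the $G$-action on $V$. The map $(d_V \omega_V^\vee)^\vee \colon T_V|_U \to F_V|_U$, being the dual of the derivative of the cosection $\omega_V^\vee \colon F_V^\vee \to \oO_V$ restricted to $U$, encodes differentiation of $\omega_V$ along tangent vectors (and is well-defined intrinsically on $U$, since $\omega_V|_U = 0$). The composite therefore sends $\xi$ to the derivative $\xi_V(\omega_V)|_U$, which vanishes identically on $V$ by $G$-invariance of $\omega_V$.

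For the second composition, I would use condition (2) of Setup-Definition~\ref{ind hyp}, namely $\phi_V \circ \omega_V \equiv 0$ as a global section of $\fg^\vee(-D_V)$. Applying $d_V$ to this identity and using the Leibniz rule produces two terms: one involving the derivative of $\omega_V$ composed with $\phi_V$, and one involving the derivative of $\phi_V$ applied to $\omega_V$. Restricting to $U$, the latter vanishes because $\omega_V|_U = 0$, and rewriting the former in bundle-map notation yields exactly $\phi_V \circ (d_V \omega_V^\vee)^\vee = 0$ on $U$.

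The main obstacle is nothing deep; it is simply a matter of making precise the meaning of ``differentiating'' a section of a vector bundle without introducing extraneous connection-dependent terms. Both arguments sidestep this by working on the zero locus $U$, where the relevant derivatives are intrinsically defined. I note that condition~(3) of Setup-Definition~\ref{ind hyp}, concerning the fixed loci, is not needed for the complex property itself and only enters later when refining $K_V$ to a two-term complex via Kirwan blowups.
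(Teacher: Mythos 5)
Your proposal is correct and follows essentially the same route as the paper: the first composition vanishes by $G$-invariance of $\omega_V$, and the second by differentiating the identity $\phi_V \circ \omega_V = 0$ (giving $\phi_V \circ (d_V\omega_V^\vee)^\vee = -\omega_V^\vee\circ d_V\phi_V$), which vanishes upon restriction to $U = (\omega_V = 0)$. Your remark that condition~(3) of Setup-Definition~\ref{ind hyp} is not used here, and your care about the derivative being intrinsically defined on the zero locus, are both accurate side observations that the paper leaves implicit.
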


\begin{proof}
Since $\omega_V$ is $G$-invariant, the composition $\left( d_V \omega_V^\vee \right)^\vee \circ \sigma_V=0$. Moreover, 
since $\phi_V \circ \omega_V=0$, by differentiating we obtain $\phi_V \circ \left( d_V \omega_V^\vee \right)^\vee = - \omega_V^\vee \circ d_V\phi_V$, which is zero when restricted to $U=(\omega_V=0)$. This proves the lemma.
\end{proof}

\begin{defi} \emph{(Reduced tangent and obstruction sheaf)}
We define the reduced tangent sheaf and reduced obstruction sheaf of $V$ to be
$$T_V^\rred := \coker{\sigma_V^\vee}, \quad \text{and}\quad
F_V^\rred := \ker{\phi_V}.
$$
The section $\omega_V$ induces a section of $F_V^\rred$, denoted by $\omega_V^\rred$.
\end{defi}

The restriction of the complex \eqref{complex} to the stable part $U^s$ of $U$ gives rise (and is quasi-isomorphic) to a two-term complex
\begin{align*}
K_V^\rred = [\bigl( d_V (\omega_V^\rred)^\vee \bigr)^\vee: T_V^\rred \vert_{U^s} \lra F_V^\rred \vert_{U^s} ].
\end{align*}
We denote $\coker \bigl( d_V (\omega_V^\rred)^\vee \bigr)^\vee |_{U^s}$ by $\Ob_V^\rred$. One can easily check that there are natural isomorphisms
\begin{align} \label{H^1 to Ob}
H^1(K_V) |_{U^s} \cong H^1(K_V|_{U^s}) \cong \Ob_{V}^\rred .
\end{align} 

By slight abuse of terminology, we also refer to $\Ob_V^\rred$ as the reduced obstruction sheaf. This is validated by the following proposition.

\begin{prop} \emph{(Reduced obstruction theory)}
The dual of $K_V^\rred$ induces a perfect obstruction theory on the DM stack $\left[ U^s / G\right]$.
\end{prop}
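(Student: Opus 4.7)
The plan is to verify the two defining conditions of a perfect obstruction theory from Definition~\ref{Perf obs th}: first, that $(K_V^\rred)^\vee$ has perfect amplitude contained in $[-1,0]$; second, that it admits a natural morphism to $L_{[U^s/G]}^{\geq -1}$ whose $h^0$ is an isomorphism and $h^{-1}$ is surjective.

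First I would check the perfect amplitude. The decisive input is that stable points have finite stabilizers, so the infinitesimal action $\sigma_V^\vee \colon \fg \otimes \oO_{U^s} \to T_V |_{U^s}$ is fiberwise injective; hence $T_V^\rred = \coker \sigma_V^\vee$ is locally free on $U^s$. Dually, the Setup factorization $\sigma_V(-D_V) = \phi_V \circ (\Omega_V(-D_V) \to F_V)$, combined with the fiberwise surjectivity of $\sigma_V$ on $U^s$, forces $\phi_V$ to be fiberwise surjective there, so $F_V^\rred = \ker \phi_V$ is also locally free on $U^s$. Therefore $K_V^\rred = [T_V^\rred \to F_V^\rred]|_{U^s}$ is a two-term complex of vector bundles in degrees $[0,1]$, and $(K_V^\rred)^\vee$ has perfect amplitude in $[-1,0]$.

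Next I would construct the morphism to the truncated cotangent complex. After shrinking $V$ to an open neighborhood of $U^s$ on which $G$ acts with finite stabilizers (possible by openness), $[V/G]$ is a smooth DM stack containing $[U^s/G]$ as the closed substack cut out by $\omega_V$. The standard triangle $p^\ast L_{[U/G]} \to L_U \to L_{U/[U/G]} = \fg^\vee|_U[0]$ identifies $p^\ast L_{[U^s/G]}^{\geq -1}$ with a complex quasi-isomorphic to $[I/I^2 \to \ker \sigma_V|_{U^s}]$ in degrees $[-1,0]$, where $I$ is the ideal of $U$ in $V$. The Setup condition $\phi_V \circ \omega_V = 0$ dualizes to $\omega_V^\vee \circ \phi_V^\vee = 0 \colon \fg(D_V) \to F_V^\vee \to \oO_V$, so $\omega_V^\vee$ factors through the quotient $(F_V^\rred)^\vee = F_V^\vee / \phi_V^\vee(\fg(D_V))$. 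This supplies the map in degree $-1$, while in degree $0$ one uses the canonical identification $(T_V^\rred)^\vee = \ker \sigma_V$. Both are $G$-equivariant and descend to the desired morphism $(K_V^\rred)^\vee \to L_{[U^s/G]}^{\geq -1}$.

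Finally I would verify the $h^0$ isomorphism and $h^{-1}$ surjection. The four-term complex $K_V|_{U^s}$ has vanishing cohomology in degrees $-1$ and $2$ by the fiberwise injectivity/surjectivity established above, so it is quasi-isomorphic to $K_V^\rred$ in the derived category, as noted in the paragraph preceding the proposition. Dually, $(K_V^\rred)^\vee \simeq \tau^{\geq -1}(K_V^\vee)|_{U^s}$, and this 3-term complex matches the expression found for $L_{[U^s/G]}^{\geq -1}$. The constructed morphism is then a quasi-isomorphism, which satisfies both required conditions automatically. I expect the main obstacle to be the bookkeeping for the divisor twist $D_V$: the argument is cleanest in the original d-critical chart case where $D_V = 0$, but under successive Kirwan blowups $D_V$ accumulates $2E$ contributions by Lemma~\ref{Obs bundle for blow}, and one must carefully track these twists through the cotangent comparison. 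This ultimately reduces to the factorization property in Setup~\ref{ind hyp}(1).
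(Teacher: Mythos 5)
Your first two steps follow the paper's route faithfully: you establish perfect amplitude from the fiberwise injectivity of $\sigma_V^\vee$ and surjectivity of $\phi_V$ on the stable locus, and you use the standard triangle $q^\ast L_{[U^s/G]} \to L_{U^s} \to \fg^\vee$ to identify $q^\ast L_{[U^s/G]}^{\geq -1}$ with $[I^s/(I^s)^2 \to \Omega_V^\rred|_{U^s}]$, producing the morphism via $(\omega_V^\rred)^\vee$ in degree $-1$ and the identity in degree $0$. This is exactly the paper's construction.

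The final step, however, has a genuine error. You conclude by asserting that the constructed morphism $(K_V^\rred)^\vee \to L_{[U^s/G]}^{\geq -1}$ is a quasi-isomorphism, ``which satisfies both required conditions automatically.'' This is false in general. The degree $-1$ component $(\omega_V^\rred)^\vee \colon (F_V^\rred)^\vee|_{U^s} \to I^s/(I^s)^2$ is surjective because $\omega_V^\rred$ generates the ideal, but it is \emph{not} injective unless the section is regular; as a basic example, $\omega = d(x^2 y)$ on $\bA^2$ has $(F^\vee)|_U \to I/I^2$ with the nontrivial kernel element $(x, -2y)$. If the map were a quasi-isomorphism, the obstruction sheaf would vanish and $[U^s/G]$ would be virtually smooth, which is exactly the degenerate case one wants to exclude. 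The correct and much weaker statement is the one actually required by Definition~\ref{Perf obs th}: $h^0$ is an isomorphism because the degree-$0$ map is the identity on $\Omega_V^\rred|_{U^s}$, and $h^{-1}$ is surjective because $(\omega_V^\rred)^\vee$ is. Related to this, the identification ``$\tau^{\geq -1}(K_V^\vee)|_{U^s}$ matches the expression for $L_{[U^s/G]}^{\geq -1}$'' silently replaces $(F_V^\rred)^\vee$ by $I^s/(I^s)^2$ in degree $-1$, which is precisely where the two complexes differ. Finally, the worry about bookkeeping the $D_V$ twist is a red herring: once the factorization $\phi_V$ of Setup~\ref{ind hyp}(1) is in hand, the twist only enters via the reductions $T_V^\rred$, $F_V^\rred$ and causes no further difficulty in the cotangent comparison.
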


\begin{proof}
On $V^s$, $\sigma_V^\vee$ is injective and $\phi_V$ is surjective. The latter follows from the fact that the surjective morphism $\sigma_V^\vee (-D_V)$ factors through $\phi_V$. In particular, the two terms of $K_V^\rred$ are vector bundles and $U^s$ is the zero locus of $\omega_V^\rred \vert_{V^s}$.

Let $q_V \colon V^s \to \left[ V^s / G \right]$ be the quotient morphism. We have the exact triangle of truncated cotangent complexes
\begin{align*}
q_V^* L_{\left[ V^s / G \right]}^{\geq -1} \lr L_{V^s}^{\geq -1} \lr \fg^\vee \lr q_V^* L_{\left[ V^s / G \right]}^{\geq -1}[1],
\end{align*}
from which we deduce that $T_V^\rred |_{V^s}= q_V^* T_{\left[ V^s / G \right]}^{\leq 1}$. Therefore, using the same triangle for $U^s$, we find that
\begin{align*}
q_U^* L_{\left[ U^s / G \right]}^{\geq -1} = [ I^s / (I^s)^2 \lra \Omega_V^\rred |_{U^s} ],
\end{align*}
where $I^s$ is the ideal of $U^s$ in $V^s$, $\Omega_V^\rred|_{V^s} = \left( T_V^\rred |_{V^s} \right)^\vee = \ker \sigma_V |_{V^s} = q_V^* \Omega_{\left[ V^s / G \right]} = q_V^* L_{\left[ V^s / G \right]}^{\geq -1}$ and that there is an arrow
\begin{align*}
\xymatrix{
K_V^\rred|_{U^s}^\vee \ar@{=}[r] \ar[d] & [ F_V^\rred \vert_{U^s}^\vee \ar[r]^-{d_V (\omega_V^\rred)^\vee} \ar[d]^-{(\omega_V^\rred)^\vee} & \Omega_V^\rred \vert_{U^s} \ar@{=}[d] ]\\
q_U^* L_{\left[ U^s / G \right]}^{\geq -1} \ar@{=}[r] & [ I^s/(I^s)^2 \ar[r]^-{d_{V}} & \Omega_V^\rred \vert_{U^s} ].
}
\end{align*} 
Therefore $K_V^\rred|_{U^s}$ descends to a perfect obstruction theory on $\left[ U^s / G \right]$.
\end{proof}

\begin{rmk}
Since the rank of $F_V$ is equal to $\dim V$, in order to obtain a zero-dimensional virtual cycle, we need to replace $F_V|_{U^s}$ by $F_V^\rred|_{U^s}$, which has rank $\dim V - \dim G = \dim [V^s / G]$.
Note  that the surjective morphism $F_V |_{U^s} \stackrel{\phi_V}{\to} \fg^\vee(-D_V)$ 
induces a twisted cosection $\Ob_{U^s} \to \fg^\vee(-D_V)$ (cf. \cite{cosection}), which enables us to make the perfect obstruction theory $0$-dimensional.
\end{rmk}

\subsection{$\Omega$-equivalence} We introduce the following definition.

\begin{defi} \emph{($\Omega$-equivalence)} \label{omega equivalence}
Let $V$ be a smooth affine $G$-scheme and $F_V$ a $G$-equivariant bundle on $V$. We say that two invariant sections $\omega_V, \bom_V \in \Gamma(V, F_V)$ are $\Omega$-equivalent if
\begin{enumerate}
\item $(\omega_V) = (\bom_V) =: I_U$ as ideal sheaves in $\oO_V$, and
\item there exist equivariant morphisms $A,B \colon F_V \to T_V$ such that
\begin{align}
\label{omegaf - omegag} \omega_V^\vee = \bom_V^\vee + \bom_V^\vee \circ A^\vee \circ (d\bom_V^\vee) \ \left( \text{mod} \ I_U^2 \right), \\
\label{omegag - omegaf} \bom_V^\vee = \omega_V^\vee + \omega_V^\vee \circ B^\vee \circ (d\omega_V^\vee) \ \left( \text{mod} \ I_U^2 \right).
\end{align}
\end{enumerate}
\end{defi}

The reason for introducing this notion is the following proposition.

\begin{prop} \label{equivalence for df and dg}
Let $U$ be an affine $G$-invariant d-critical scheme and $(U,V,f,i)$ and $(U,V,g,i)$ two invariant d-critical charts with $V$ an affine, smooth $G$-scheme. Then $\omega_f = df, \ \omega_g = dg$ are $\Omega$-equivalent sections of $\Omega_V$.
\end{prop}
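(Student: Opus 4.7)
The plan is to verify the two conditions of Definition~\ref{omega equivalence} directly, leveraging the d-critical structure on $U$ together with the reductivity of $G$ to secure equivariance of the required witnesses.

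Condition (1) is immediate: both $(U,V,f,i)$ and $(U,V,g,i)$ are d-critical charts with the same underlying $U$, so $U = \Crit(f) = \Crit(g)$ as subschemes of $V$, whence their Jacobian ideals satisfy $(df) = (dg) = I_U$ in $\oO_V$. For condition (2), the crucial starting input is that $f$ and $g$ represent the same section $s|_U$ of $\sS_M|_U \subseteq \oO_V/I_U^2$, which forces $f - g \in I_U^2$ in $\oO_V$; moreover, $f - g$ is $G$-invariant, since $f$ and $g$ both are.

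The main step is to produce a $G$-equivariant symmetric morphism $A \colon \Omega_V \to T_V$ witnessing the first identity of (2). I plan to consider the $G$-equivariant $\oO_V$-linear surjection
$$\mu \colon \mathrm{Sym}^2 T_V \twoheadrightarrow I_U^2, \qquad \xi \cdot \eta \longmapsto dg^\vee(\xi) \cdot dg^\vee(\eta),$$
which is surjective because $I_U$ is generated by the components of $dg$. By reductivity of $G$, the functor of $G$-invariants is exact on coherent sheaves over the affine $G$-scheme $V$, so the invariant element $f - g \in (I_U^2)^G$ admits an invariant lift $H \in (\mathrm{Sym}^2 T_V)^G$, which I reinterpret as a symmetric equivariant morphism $A \colon \Omega_V \to T_V$. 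The desired identity then follows from a Leibniz-rule computation: differentiating the equality $f - g = \mu(H)$ produces three terms, one of which (involving $dH$ itself) lies in $I_U^2 \cdot \Omega_V$ and is discarded, while the other two combine -- precisely because $H$ is symmetric -- into the composition $\bom_V^\vee \circ A^\vee \circ d\bom_V^\vee$ appearing in Definition~\ref{omega equivalence}, up to a factor of $2$ that can be absorbed into $A$. Swapping the roles of $f$ and $g$ and repeating the argument produces the equivariant $B$ witnessing the second identity.

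The only genuine obstacle in this plan is securing the $G$-equivariance of $A$ and $B$, and this is settled by the Reynolds operator once $\mu$ is organized as an equivariant map of coherent sheaves on the affine $G$-scheme $V$. The remaining work is routine: the use of $\mathrm{Sym}^2 T_V$ rather than $T_V \otimes T_V$ is exactly what forces the two ``mixed derivative'' terms in the Leibniz expansion to coincide, producing a clean match with Definition~\ref{omega equivalence} with no residual correction modulo $I_U^2 \cdot \Omega_V$.
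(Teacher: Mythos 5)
Your proof is correct and follows essentially the same route as the paper: the starting point $f - g \in I_U^2$ coming from the d-critical structure, the Leibniz-rule differentiation to manufacture the witness $A \colon \Omega_V \to T_V$, and the reliance on reductivity of $G$ for equivariance are all the same ingredients. The only cosmetic difference is that you secure an invariant (and symmetric) $A$ up front by lifting $f - g$ along the $G$-equivariant surjection $\mathrm{Sym}^2 T_V \twoheadrightarrow I_U^2$ and invoking exactness of $G$-invariants, whereas the paper chooses arbitrary coefficients $a^{kl}$ in \'etale coordinates, differentiates, and only afterwards applies the Reynolds operator to make $A$ equivariant.
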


\begin{proof} We may assume that $f-g \in I_U^2$, where $I_U$ is the ideal of $U$ in $V$.

Let $x_1, ..., x_n$ be \'{e}tale coordinates on $V$. Let us write $f_i$ for $\frac{\partial f}{\partial x_i}$, $f_{ij}$ for $\frac{\partial^2f}{\partial x_i \partial x_j}$ and $H_f$ for the Hessian of $f$ for convenience (and similarly for $g$). Then we have $I_U = ( f_i )_{i=1}^n = ( g_ i )_{i=1}^n$. Moreover $f - g \in I_U^2$ implies that 
\begin{align} \label{f-g}
f = g + \sum_{k,l} a^{kl} \cdot g_k \cdot g_l \text{, } a^{kl} \in \Gamma(\oO_V).
\end{align}

Differentiating, we obtain for any $i$ and pair $(i,j)$ the relations
\begin{align} \label{df - dg}
f_i = g_i + \sum_{k,l} a^{kl} \cdot g_{ki} \cdot g_l + \sum_{k,l} a^{kl} \cdot g_k \cdot g_{li} \text{ mod } I_U^2, \\ \notag
f_{ij} = g_{ij} + \sum_{k,l} a^{kl} \cdot g_{lj} \cdot g_{ki} + \sum_{k,l} a^{kl} \cdot g_{kj} \cdot g_{li} \text{ mod } I_U.
\end{align}
Note that we may re-write \eqref{df - dg} as 
\begin{align*}
\omega_f = \omega_g + H_g \circ A \circ dg \left( \text{mod} \ I_U^2 \right),
\end{align*}
where $A$ is a morphism $A \colon \Omega_V \rightarrow T_V$.

Since $df$, $dg$ and $H_g$ are invariant, applying the Reynolds operator we can assume that $A$ is equivariant.
Hence
\begin{align} 
\omega_f^\vee = \omega_g^\vee + \omega_g^\vee \circ A^\vee \circ (d_V\omega_g^\vee)  \ \left( \text{mod} \ I_U^2 \right),
\end{align}
and similarly for $g$ we have
\begin{align} 
\omega_g^\vee = \omega_f^\vee + \omega_f^\vee \circ B^\vee \circ (d_V\omega_f^\vee)  \ \left( \text{mod} \ I_U^2 \right).
\end{align}
This proves the proposition. \end{proof}

The following two lemmas show that $\Omega$-equivalence is preserved by the operations of Kirwan blowup and taking slices of closed orbits.

\begin{notation} In what follows $$\Lambda_V = (U,V,F_V,\omega_V, D_V,\phi_V), \ \bar{\Lambda}_V = (U,V,F_V,\bom_V, D_V,\phi_V)$$ will denote data of a weak local model structure on $V$. Similarly on the Kirwan blowup we write $$\Lambda_{\hV} = (\hU, \hV, F_{\hV}, \omega_{\hV}, D_{\hV}, \phi_{\hV}), \ \bar{\Lambda}_{\hV} = (\hU, \hV, F_{\hV}, \bom_{\hV}, D_{\hV}, \phi_{\hV})$$ and on an \'{e}tale slice thereof $$\Lambda_S = (T,S,F_S,\omega_S, D_S,\phi_S),\ \bar{\Lambda}_S = (T,S,F_S,\bom_S, D_S,\phi_S).$$
\end{notation}

\begin{lem}\label{comparison} Let $V$ be a smooth affine $G$-scheme and $\Lambda_V$, $\bar{\Lambda}_V$ as above, such that $\omega_V, \bom_V$ are $\Omega$-equivalent with $A, B$ as in \eqref{omegaf - omegag} and \eqref{omegag - omegaf}. Then the blowup sections $\omega_{\hV}, \bom_{\hV}$ are $\Omega$-equivalent (via induced equivariant morphisms $\hat{A}, \hat{B} \colon F_{\hV} \to T_{\hV}$).
\end{lem}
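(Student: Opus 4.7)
The plan is to check the two defining conditions of $\Omega$-equivalence (Definition~\ref{omega equivalence}) for $\omega_{\hV}$ and $\bom_{\hV}$. The first condition, equality of ideals, follows immediately from Proposition~\ref{coinc} together with Lemma~\ref{ideal of intrinsic}: the zero loci of $\omega_{\hV}$ and $\bom_{\hV}$ are the Kirwan blowups of $U=(\omega_V=0)$ and $U=(\bom_V=0)$, respectively, and these coincide by the canonical nature of the Kirwan blowup; hence $(\omega_{\hV})=(\bom_{\hV})=I_{\hU}$.

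For the second condition, I would first construct $\hat A$ from $A$ (and $\hat B$ from $B$). Since $A\colon F_V\to T_V$ is $G$-equivariant, its matrix entries in an equivariant weight trivialization are functions of prescribed weights, so in particular those entries whose weight is nonzero vanish along $V^G$. Working locally on a Kirwan blow-up chart with exceptional divisor $\{\xi=0\}$ in the spirit of the proof of Lemma~\ref{Obs bundle for blow}, one then checks that the composition $F_{\hV}\hookrightarrow\pi^*F_V\xrightarrow{\pi^*A}\pi^*T_V$ factors through $T_{\hV}\subset\pi^*T_V$: the generators of $F_{\hV}$ (fixed basis sections $e_i$ and moving ones scaled by $\xi$) are sent to $\oO_{\hV}$-combinations of the generators of $T_{\hV}$, the compatibility of $\xi$-factors being forced by the weights of the entries of $A$. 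This defines the desired equivariant $\hat A\colon F_{\hV}\to T_{\hV}$, and $\hat B$ is built identically.

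It remains to verify \eqref{omegaf - omegag} and \eqref{omegag - omegaf} for the blow-up data. I would pull the identity \eqref{omegaf - omegag} on $V$ back along $\pi$, obtaining a congruence in $\pi^*F_V$ modulo $\pi^*I_U^2\cdot\pi^*F_V$, and then rewrite this congruence in the subsheaf $F_{\hV}\subset\pi^*F_V$ with its trivialization where the fixed basis elements are unchanged and the moving ones are multiplied by $\xi$. The extra $\xi$-factors introduced by dividing the moving components of $\omega_{\hV}$ and $\bom_{\hV}$ by $\xi$ should cancel against those in the derivatives $\partial/\partial y_j=\xi\,\partial/\partial x_j$ on the blow-up and in the matrix of $\hat A$ expressed in the new basis, producing the identity in $F_{\hV}$ modulo $I_{\hU}^2\cdot F_{\hV}$ (the shift from $\pi^*I_U^2$ to $I_{\hU}^2$ being legitimate because $\pi^*I_U^2\subset I_{\hU}^2$).

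The main technical obstacle is exactly this bookkeeping of $\xi$-powers: one must confirm that the passage from $\pi^*F_V$ to the subsheaf $F_{\hV}$ preserves the congruence consistently on both the linear left-hand side $\omega_{\hV}^\vee-\bom_{\hV}^\vee$ and the quadratic right-hand side $\bom_{\hV}^\vee\circ\hat A^\vee\circ d\bom_{\hV}^\vee$. By Remark~\ref{passage to formal}, the verification may be performed in the formal completion of $V$ along $V^G$, where the pure weight decomposition of all the relevant sheaves reduces the check to a finite combinatorial one on monomial weights.
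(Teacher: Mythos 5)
Your overall strategy matches the paper's: check equality of ideals, then pull back \eqref{omegaf - omegag}, construct an induced $\hat A$, and untwist by $\xi$ to land in $F_{\hV}$. The ideal equality and the construction of $\hat A$ (you via weights in a local chart, the paper via the relative Euler sequence for $E$ and the tangent sequence of the blowup, yielding a sheaf-theoretic factorization $F_{\hV}\to T_{\hV}$ of $\pi^*A\circ\gamma$) are both fine, and these are interchangeable formulations.

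However, the final modulus-upgrade step has a genuine gap. You justify passing from a congruence ``modulo $\pi^*I_U^2$'' to one ``modulo $I_{\hU}^2$'' by the inclusion $\pi^*I_U^2\subset I_{\hU}^2$, but this does not do the job. After you strip the $\xi$'s (i.e.\ apply $\gamma^{-1}$, where $\gamma\colon F_{\hV}\hookrightarrow\pi^*F_V$ is multiplication by $\xi$ on the moving part), the coefficients of the error section need \emph{not} lie in $\pi^*I_U^2$: for a moving frame vector the coefficient has been divided by $\xi$, and $\xi^{-1}\pi^*I_U^2$ is generally larger than $\pi^*I_U^2$. Concretely, if $y$ is a single moving coordinate with $y=\xi$ on a blowup chart and $I_U=(xy)$ with $x$ fixed, a moving frame section with coefficient $x^2y^2$ becomes $x^2\xi$ after untwisting, which is not in $\pi^*I_U^2=(x^2\xi^2)$, even though it does lie in $I_{\hU}^2=(x^2)$. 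So the conclusion is right but not for the stated reason. What the paper actually uses — and states explicitly — is the stronger fact that any $G$-\emph{invariant} section $\eta\in\Gamma(V,I_U^2\cdot F_V)$ has its blowup lift $\hat\eta$ lying in $I_{\hU}^2\cdot F_{\hV}$. This requires the weight decomposition of $I_U^2$ together with the observation that nonzero-weight elements of $I_U$ pull back divisibly by $\xi$ into $\xi\cdot I_{\hU}$; the $G$-invariance of the total error $\omega_V-\bom_V-(d_V\bom_V^\vee)^\vee\circ A\circ\bom_V$ is essential for the bookkeeping to close. You gesture at exactly this (``formal completion'', ``weight decomposition'', ``bookkeeping of $\xi$-powers'') but then replace the needed argument with the insufficient inclusion $\pi^*I_U^2\subset I_{\hU}^2$. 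Replacing that sentence with the correct invariance-plus-weights lemma, and actually carrying it out, would close the gap.
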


\begin{proof}
Let $i_E : E \to \hat{V}$ be the exceptional divisor of the blowup $\pi \colon \hV \to V$.
We have $N_{V^G/V} = T_V \vert_{V^G}^{mv}$ and therefore the relative Euler sequence
\begin{align} \label{5.5}
0 \lra \oO_E(E) \lra \pi^* \left( T_V \vert_{V^G}^{mv} \right) \stackrel{\alpha}{\lra} T_{E/V^G}(E) \lra 0.
\end{align}
We also have the tangent sequence
\begin{align} \label{5.6}
0 \lra T_{\hat{V}} \stackrel{d\pi}{\lra} \pi^* T_V \stackrel{\beta}{\lra} i_{E *} T_{E/V^G}(E) \lra 0.
\end{align}
Using these and the definition of $F_{\hat{V}}$ we have the following commutative diagram
\begin{align} \label{5.7}
\xymatrix{
0 \ar[r] & F_{\hat{V}} \ar[r]^-\gamma \ar@{-->}[d]^-{\hat{A}} & \pi^* F_V \ar[r] \ar[d]^-{\pi^* A} & \pi^* F_V \vert_E \ar[r] \ar[d] & \pi^* \left( F_V \vert_{V^G}^{mv} \right) \ar[d] \\
0 \ar[r] & T_{\hat{V}} \ar[r]^-{d\pi} & \pi^* T_V \ar[r] \ar@<-0.3ex>[drr]_\beta & \pi^* T_V \vert_E \ar[r] & \pi^* \left( T_V \vert_{V^G}^{mv} \right) \ar[d]^-\alpha \\
& & & & T_{E / V^G}(E).\\
}
\end{align}
By \eqref{5.5}, \eqref{5.6} and \eqref{5.7} it follows that the composition $\beta \circ \pi^* A \circ \gamma$ is zero and therefore we obtain an equivariant morphism $\hat{A} \colon F_{\hV} \to T_{\hV}$ induced by $\pi^* A$. 

It remains to check that $\hat{A}$ satisfies \eqref{omegaf - omegag} for the blowup sections ${\omega}_{\hV}$ and $\bom_{\hV}$. We can dualize and pull back \eqref{omegaf - omegag} to obtain
\begin{align} \label{5.8}
\pi^* \omega_V = \pi^* \bom_V +  \pi^* \left( d_V \bom_V^\vee \right)^\vee \circ \pi^* A \circ \pi^* \bom_V \left( \text{mod} \ \pi^* I_U^2 \right).
\end{align}
From the diagram
\begin{align}
\xymatrix{
\oO_{\hV} \ar[r]^-{\bom_{\hV}} \ar[dr]_{\pi^* \bom_{V}} & F_{\hV} \ar[r]^{\hat{A}} \ar[d]^-{\gamma} & T_{\hV} \ar[r]^-{\left( d_{\hV} \bom_{\hV}^\vee \right)^\vee} \ar[d]^-{d\pi} & F_{\hV} \ar[d]^-{\gamma}\\
& \pi^* F_V \ar[r]_{\pi^* A} & \pi^* T_V \ar[r]_{\pi^* \left( d_V \bom_{V}^\vee \right)^\vee} & \pi^* F_V 
}
\end{align}
we see that
\begin{align}
\pi^* \left( d_V \bom_{V}^\vee \right)^\vee \circ \pi^* A \circ \pi^* \bom_{V} = \gamma \circ \left( d_{\hV} \bom_{\hV}^\vee \right)^\vee \circ \hat{A} \circ \bom_{\hV} . 
\end{align}
Therefore, we may re-write \eqref{5.8} as
\begin{align}
\gamma \circ \left( \omega_{\hV} - \bom_{\hV} - \left( d_{\hV} \bom_{\hV}^\vee \right)^\vee \circ \hat{A} \circ \bom_{\hV} \right) \in \Gamma \left( \hV, \pi^* I_U^2 \cdot \pi^* F_V \right).
\end{align}
By equivariance, we can un-twist by $\gamma$ (which is multiplication by $\xi$, the equation of the exceptional divisor, on the moving part of $\pi^* F_V$) to obtain
\begin{align}
\omega_{\hV} = \bom_{\hV} + \left( d_{\hV} \bom_{\hV}^\vee \right)^\vee \circ \hat{A} \circ \bom_{\hV} \left( \text{mod} \ I_{\hat{U}}^2 \right).
\end{align}
Here we are using the fact that if $\eta$ is an invariant section of $I_U^2 \cdot F_V$, then the induced section $\hat{\eta}$ factors through $I_{\hU}^2 \cdot F_{\hV}$.

Dualizing the latter, we obtain relation \eqref{omegaf - omegag} for the blowup. \eqref{omegag - omegaf} for the blowup follows by the same argument.
\end{proof}

\begin{lem} \label{comparison for slice}
Let $S$ be an \'{e}tale slice of a closed point $x \in \hat{V}$ with closed orbit and stabilizer $H$. Let $\Lambda_{\hV}$, $\bar{\Lambda}_{\hV}$ be data of a weak local model on $\hV$ such that $\omega_{\hV}, \bom_{\hV}$ are $\Omega$-equivalent. Let $\omega_S, \bom_S$ be the two sections obtained as part of the local setup induced on $S$ using these two choices of data and Lemma~\ref{Obs bundle for blow}. 
Then $\omega_S, \bom_S$ are also $\Omega$-equivalent.
\end{lem}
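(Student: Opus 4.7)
The plan is to produce $A_S, B_S : F_S \to T_S$ by restricting $\hat{A}, \hat{B}$ to the slice $S$ and descending through the $H$-equivariant quotients that define $F_S$ and $T_S$ in Lemma~\ref{Obs bundle for blow}, then checking that \eqref{omegaf - omegag} and \eqref{omegag - omegaf} for $\omega_{\hV}, \bom_{\hV}$ restrict cleanly to the analogous identities for $\omega_S, \bom_S$ modulo $I_T^2 = (I_{\hU}|_S)^2$. The setup to keep in mind is the slice fibration $G \times_H S \to G/H$, which yields the $H$-equivariant tangent and cotangent sequences
$$0 \to T_S \to T_{\hV}|_S \to (\fg/\fh) \otimes \oO_S \to 0, \quad 0 \to (\fg/\fh)^\vee \otimes \oO_S \to \Omega_{\hV}|_S \to \Omega_S \to 0,$$
and which identifies $F_S$ as the $H$-equivariant subquotient of $F_{\hV}|_S$ obtained by cancelling $(\fg/\fh)^\vee(-D_{\hV})|_S$, with $\omega_S, \bom_S$ as the induced sections.

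First I would use the reductiveness of $H$ to split these sequences equivariantly, fixing $H$-invariant decompositions $T_{\hV}|_S \cong T_S \oplus (\fg/\fh) \otimes \oO_S$ and $F_{\hV}|_S \cong F_S \oplus (\fg/\fh)^\vee(-D_{\hV})|_S$. Using these splittings, I would project the $H$-equivariant morphism $\hat{A}|_S : F_{\hV}|_S \to T_{\hV}|_S$ onto its $F_S$-to-$T_S$ component, defining $A_S : F_S \to T_S$; a symmetric construction applied to $\hat{B}|_S$ gives $B_S$. The resulting maps are $H$-equivariant by construction, in parallel with the diagram chase \eqref{5.7} of Lemma~\ref{comparison} but run in the opposite direction (descent instead of lifting).

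Next, I would restrict identity \eqref{omegaf - omegag} from $\hV$ to $S$, getting
$$\omega_{\hV}^\vee|_S = \bom_{\hV}^\vee|_S + \bom_{\hV}^\vee|_S \circ \hat{A}|_S^\vee \circ d\bom_{\hV}^\vee|_S \pmod{(I_{\hU}|_S)^2},$$
and push this equality through the dual of the inclusion $F_S \hookrightarrow F_{\hV}|_S$ to obtain the corresponding relation for $\omega_S, \bom_S$ with $A_S$. The argument for \eqref{omegag - omegaf} with $B_S$ is identical.

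The main obstacle will be verifying that the projection step is consistent, i.e.\ that the cross-terms coming from the $(\fg/\fh)^\vee(-D_{\hV})|_S$-components of $\hat{A}|_S$ and $d\bom_{\hV}^\vee|_S$ disappear modulo $I_T^2$ after projection onto $F_S$. The natural expectation is that $\bom_{\hV}|_S$ itself lies in the $F_S$-summand of the chosen splitting because the complementary summand maps isomorphically onto $(\fg/\fh)^\vee(-D_{\hV})|_S$ via $\phi_{\hV}$ while $\phi_{\hV} \circ \bom_{\hV} = 0$ by condition (2) of Setup~\ref{ind hyp}; this forces the apparently extraneous cross-terms to vanish. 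The remainder is routine $H$-equivariant bookkeeping modelled on Lemma~\ref{comparison}.
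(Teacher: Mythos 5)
Your overall strategy coincides with the paper's: choose $H$-equivariant splittings of the short exact sequences defining $T_S\subset T_{\hV}|_S$ and $F_S$ as a quotient of $F_{\hV}|_S$ (the paper calls the resulting right inverses $r_S\colon\Omega_S\to\Omega_{\hV}|_S$ and $\delta_S\colon F_S\to F_{\hV}|_S$), define $\hat A_S^\vee := \delta_S^\vee\circ\hat A^\vee\circ r_S$, and push the relation \eqref{omegaf - omegag} from $\hV$ through to $S$. That much matches.

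However, the justification you give for the vanishing of the cross-terms is incomplete and would not, as stated, close the argument. You invoke only that $\phi_{\hV}\circ\bom_{\hV}=0$, i.e.\ that $\bom_{\hV}|_S$ lies in the $F_S$-summand of the splitting, so that $\bom_{\hV}^\vee|_S = \bom_S^\vee\circ\delta_S^\vee$. That deals with the $F$-side. But the other cross-term lives on the $\Omega$-side: after precomposing \eqref{omegaf - omegag} with $\gamma_S^\vee$, one is left comparing $d_{\hV}\bom_{\hV}^\vee|_T\circ\gamma_S^\vee$ with $r_S\circ d_S\bom_S^\vee|_T$, two maps $F_S^\vee|_T\to\Omega_{\hV}|_T$. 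Their difference $\zeta$ satisfies $q_S\circ\zeta=0$, so a priori $\zeta|_T$ takes values in $\ker q_S|_T = (\fg/\fh)^\vee|_T$ and need not vanish; composing afterwards with $\bom_S^\vee\circ\delta_S^\vee\circ\hat A^\vee$ only contributes one factor of $I_T$, which is not enough to land in $I_T^2$. To conclude that $\zeta|_T=0$ you must additionally use that $d_{\hV}\bom_{\hV}^\vee|_T$ factors through $\Omega_{\hV}|_T^\rred$ (and $d_S\bom_S^\vee|_T$ through $\Omega_S|_T^\rred$), which is the content of Lemma~\ref{lemma complex} coming from the $G$-invariance of $\bom_{\hV}$ — the $\sigma$-end of the four-term complex — not from condition~(2). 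Since $q_S$ restricted to $\Omega_{\hV}|_T^\rred$ is an isomorphism onto $\Omega_S|_T^\rred$ with inverse $r_S$, that factorization forces $\zeta|_T=0$ and the rest goes through. This is precisely the step the paper handles by factoring the non-commutative comparison diagram through the reduced sub-diagram and appealing to Lemmas~\ref{Obs bundle for blow} and \ref{lemma complex}. In short: the condition $\phi_{\hV}\circ\bom_{\hV}=0$ handles only one of the two cancellations; without also invoking the reduced factorization of $d\bom_{\hV}^\vee$, the ``routine $H$-equivariant bookkeeping'' you defer to has a genuine hole.
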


\begin{proof}
Let $\hat{U}$ be the zero locus of $\bom_{\hV}$ (or equivalently $\omega_{\hV}$) and $T$ be the zero locus of $\bom_S$ (equivalently $\omega_S$). 
The commutative diagram
\begin{align*}
\xymatrix{
0 \ar[r] & \left(\fg / \fh \right)^\vee \ar@{=}[d] \ar[r] & \Omega_{\hV} |_S \ar[r]^-{q_S} \ar[d]^-{\sigma_{\hV}^\vee} & \Omega_S \ar[r] \ar[d]^-{\sigma_S^\vee} & 0\\
0 \ar[r] & \left(\fg / \fh \right)^\vee \ar[r] & \fg^\vee \ar[r] & \fh^\vee \ar[r] & 0
}
\end{align*}
induces an isomorphism of kernels $\Omega_{\hV} \vert_T^\rred \xrightarrow{q_S|_T} \Omega_S|_T^\rred$ of $\sigma_{\hV}^\vee|_T$ and $\sigma_S^\vee|_T$ respectively. Since both exact sequences are locally split we may shrink $S$ around $x$ and find a ($H$-equivariant) right inverse $r_S \colon \Omega_S \to \Omega_{\hV}|_S$ for $q_S$, which is then also an inverse for $q_S|_T$ when restricted to $\Omega_S|_T^\rred$.

By the same argument for the commutative diagram (recall the definition of $F_S$ in the proof of Lemma~\ref{Obs bundle for blow})
\begin{align*}
\xymatrix{
0 \ar[r] & \left(\fg / \fh \right)^\vee(-D_S) \ar@{=}[d] \ar[r] & F_{\hV} |_S \ar[r]^-{\gamma_S} \ar[d]^-{\phi_{\hV}} & F_S \ar[r] \ar[d]^-{\phi_S} & 0\\
0 \ar[r] & \left(\fg / \fh \right)^\vee(-D_S) \ar[r] & \fg^\vee(-D_S) \ar[r] & \fh^\vee(-D_S) \ar[r] & 0
}
\end{align*}
we may find a right inverse $\delta_S \colon F_S \to F_{\hV}|_S$, which is an inverse for $\gamma_S$ when restricted to $F_S|_T^\rred$, the kernel of $\phi_S|_T$.

Consider the (non-commutative) diagram
\begin{align*}
\xymatrix{
F_{\hV}^\vee |_T \ar[r]^-{d_{\hV} \bom_{\hV}^\vee} & \Omega_{\hV}|_T \ar[d]^-{q_S} \ar[r]^-{\hat{A}^\vee} & F_{\hV}^\vee|_T \ar[r]^-{\bom_{\hV}^\vee} & I_{\hU} / I_{\hU}^2|_T \ar[d]^-{q_S}\\
F_S^\vee |_T \ar[r]_-{d_S \bom_S^\vee} \ar[u]^-{\gamma_S^\vee} & \Omega_S|_T \ar@{-->}[r]_-{\hat{A}_S^\vee} & F_S^\vee|_T \ar[u]^-{\gamma_S^\vee} \ar[r]_{\bom_S^\vee} & I_T / I_T^2,
}
\end{align*}
where we define $\hat{A}_S^\vee := \delta_S^\vee \circ \hat{A}^\vee \circ r_S$. The left- and rightmost squares are commutative.

We check now that the composition $q_S \circ \bom_{\hV}^\vee \circ \hat{A}^\vee \circ d_{\hV} \bom_{\hV}^\vee \circ \gamma_S^\vee$ is equal to $\bom_S^\vee \circ \hat{A}_S^\vee \circ d_S \bom_S^\vee$. 

By Lemma~\ref{Obs bundle for blow} and Lemma~\ref{lemma complex}, the above diagram factors through the sub-diagram
\begin{align*}
\xymatrix{
\Omega_{\hV}|_T^\rred \ar[r] \ar[d]^-{q_S} & \Omega_{\hV}|_T \ar[d]^-{q_S} \ar[r]^-{\hat{A}^\vee} & F_{\hV}^\vee|_T \ar[r] & \left( F_{\hV} |_T^\rred \right)^\vee \\
\Omega_S |_T^\rred \ar[r] & \Omega_S|_T \ar@{-->}[r]_-{\hat{A}_S^\vee} & F_S^\vee|_T \ar[u]^-{\gamma_S^\vee} \ar[r] & \left( F_S |_T^\rred \right)^\vee \ar[u]^-{\gamma_S^\vee}.
}
\end{align*}
Observe now that by the definition of $\hat{A}_S^\vee$ the outer square in this diagram commutes. This immediately implies that indeed
\begin{align} \label{6.13}
q_S \circ \bom_{\hV}^\vee \circ \hat{A}^\vee \circ d_{\hV} \bom_{\hV}^\vee \circ \gamma_S^\vee = \bom_S^\vee \circ \hat{A}_S^\vee \circ d_S \bom_S^\vee.
\end{align}
Since $\omega_S^\vee = q_S \circ \omega_{\hV}^\vee \circ \gamma_S^\vee$ and $\bom_S^\vee = q_S \circ \bom_{\hV}^\vee \circ \gamma_S^\vee$, applying $q_S \circ (\bullet) \circ \gamma_S^\vee$ to the restriction of \eqref{omegaf - omegag} to the slice $S$ and using \eqref{6.13} we obtain
\begin{align*}
\omega_S^\vee = \bom_S^\vee + \bom_S^\vee \circ \hat{A}_S^\vee \circ d_S \bom_S^\vee \left( \text{mod} \ I_T^2 \right) .
\end{align*}
The exact same argument can be used to show the existence of $\hat{B}_S$.
\end{proof}

The next lemma states that for the purposes of comparing obstruction sheaves and assignments we may replace a section by any $\Omega$-equivalent section without any effect.

Given two $\Omega$-equivalent $\omega_V, \bom_V \in H^0(F_V)$, denoting $U = ( \omega_V = 0 ) = ( \bom_V = 0 )$, we obtain

$$\left( d \omega_V^\vee \right)^\vee \vert_U, \ \left( d \bom_V^\vee \right)^\vee \vert_U \colon T_V|_U \lra  F_V |_U.$$

\begin{lem} \label{Obstruction sheaf comparison}
Let $V$ be a smooth affine $G$-scheme and $\Lambda_V$, $\bar{\Lambda}_V$ data of a weak local model on $V$, such that $\omega_V, \bom_V$ are $\Omega$-equivalent. Then
\begin{align} \label{6.21}
\coker \left( d \omega_V^\vee \right)^\vee \vert_{U} = \coker \left( d \bom_V^\vee \right)^\vee \vert_{U}.
\end{align}
Moreover, the two obstruction theories on $U$ induced by $\omega_V$ and $\bom_V$ give the same obstruction assignments via the morphism \eqref{6.21}. 
\end{lem}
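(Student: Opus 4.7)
The plan is to differentiate the $\Omega$-equivalence relations to compare the two Jacobian-type maps directly, and then invoke Lemma~\ref{compute obs assign} to deduce equality of obstruction assignments.

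For the cokernel identity \eqref{6.21}, I would differentiate
\[
\omega_V^\vee = \bar\omega_V^\vee + \bar\omega_V^\vee \circ A^\vee \circ (d_V\bar\omega_V^\vee) \pmod{I_U^2}.
\]
The error lies in $I_U^2 \cdot F_V$, so its derivative restricted to $U$ vanishes; of the three Leibniz terms produced by differentiating the composition on the right, two contain an undifferentiated factor $\bar\omega_V^\vee$ and also vanish on $U$. What survives, after dualising, is
\[
(d_V\omega_V^\vee)^\vee|_U = (d_V\bar\omega_V^\vee)^\vee|_U \circ \bigl(\id + A|_U \circ (d_V\bar\omega_V^\vee)^\vee|_U\bigr),
\]
so $\im(d_V\omega_V^\vee)^\vee|_U \subseteq \im(d_V\bar\omega_V^\vee)^\vee|_U$. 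The reverse inclusion follows symmetrically from \eqref{omegag - omegaf}, so the two images coincide as subsheaves of $F_V|_U$, giving \eqref{6.21}.

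For the obstruction assignment, fix an infinitesimal lifting problem $(g,\Delta,\bar\Delta)$ at a point $p \in U$ and a lift $g'\colon \bar\Delta \to V$ of $j \circ g$, which exists because $V$ is smooth. By Lemma~\ref{compute obs assign} the two obstructions are $\rho(\omega_V \circ g')$ and $\rho(\bar\omega_V \circ g')$, where $\rho$ denotes the common quotient map provided by \eqref{6.21}. Since $g|_\Delta$ factors through $U$, one has $(g')^{\ast} I_U \subseteq I$; combined with $I \cdot \fm = 0$, evaluation along $g'$ annihilates $I_U^2$, so the $\Omega$-equivalence becomes the exact identity
\[
\omega_V \circ g' - \bar\omega_V \circ g' = \bigl(\bar\omega_V^\vee \circ A^\vee \circ (d_V\bar\omega_V^\vee)\bigr) \circ g' \;\in\; I \otimes F_V|_p.
\]
Expanding the right-hand side in \'etale coordinates on $V$ and using $I \cdot \fm = 0$ to replace every non-$\bar\omega_V$ factor by its value at $p$, the expression takes the form $(d_V\bar\omega_V^\vee)^\vee|_p(w)$ for the element $w \in I \otimes T_V|_p$ obtained by contracting $A|_p$ against $\bar\omega_V \circ g'$. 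Thus the difference lies in $\im (d_V\bar\omega_V^\vee)^\vee|_p \otimes I$ and is killed by $\rho$, so the two obstruction assignments agree.

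The substantive observation is that the mod-$I_U^2$ ambiguity in the $\Omega$-equivalence collapses upon evaluation on a small extension, turning the defining identities into honest equalities in $I \otimes F_V|_p$. Once that is seen, both claims reduce to bookkeeping with the morphisms $A$ (and, for the reverse inclusion, $B$), whose only role is to witness that the relevant differences land in the appropriate image.
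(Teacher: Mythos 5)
Your proof is correct and follows essentially the same route as the paper's. For the cokernel identity you differentiate the $\Omega$-equivalence relation and observe that the only Leibniz term surviving mod $I_U$ is the one with both $\bar\omega_V$-factors differentiated, yielding $(d_V\omega_V^\vee)^\vee = (d_V\bar\omega_V^\vee)^\vee \circ (\id + A \circ (d_V\bar\omega_V^\vee)^\vee)$ on $U$; this and its symmetric counterpart give \eqref{6.21}, exactly as in the paper. For the obstruction assignments you invoke Lemma~\ref{compute obs assign} and note that pulling back along $g'$ kills $I_U^2$ (since $(g')^*I_U \subseteq I$ and $I \cdot \fm = 0$), turning the mod-$I_U^2$ identity into an honest equality in $I \otimes F_V|_p$ with difference landing in $\im(d_V\bar\omega_V^\vee)^\vee|_p \otimes I$, hence killed by $\rho$; the paper's version is terser and applies the dualized relation with $B$ instead of $A$, but the content is identical.
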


\begin{proof}
We check that $\im \left( d \omega_V^\vee \right)^\vee \vert_{U} = \im \left( d \bom_V^\vee \right)^\vee \vert_{U}$. Since $\omega_V, \bom_V$ are $\Omega$-equivalent, there exist equivariant morphisms $A,B \colon F \to T_V$ such that
\begin{align}\label{5.14}
\omega_V^\vee = \bom_V^\vee + \bom_V^\vee \circ A^\vee \circ (d\bom_V^\vee) \ \left( \text{mod} \ I_U^2 \right), \\
\notag \bom_V^\vee = \omega_V^\vee + \omega_V^\vee \circ B^\vee \circ (d\omega_V^\vee) \ \left( \text{mod} \ I_U^2 \right).
\end{align}
Differentiating \eqref{5.14} and dualizing, we obtain
\begin{align}
\left( d \omega_V^\vee \right)^\vee = \left( d \bom_V^\vee \right)^\vee +  \left( d \bom_V^\vee \right)^\vee \circ A \circ \left( d \bom_V^\vee \right)^\vee \left( \text{mod} \ I_{{U}} \right).
\end{align}
This implies that $\im \left( d \omega_V^\vee \right)^\vee|_U \subseteq \im \left( d \bom_V^\vee \right)^\vee|_U$. By the same argument, using the second equation in \eqref{5.14}, the first claim follows.

For the obstruction assignments, consider an infinitesimal lifting problem of $U$ at $p$. Let $\Ob = \coker \left( d \omega_V^\vee \right)^\vee \vert_{U}$ and 
$$\rho \colon I \otimes_\bC F_V|_p \to I \otimes_\bC H^1(E|_p) = I \otimes_\bC H^1(\bar{E}|_p)$$
be the quotient morphism.  
Then by Lemma~\ref{compute obs assign}, we need to show that $\rho( \omega_V \circ g') = {\rho} ( \bom_V \circ g')$. But this holds, since dualizing \eqref{5.14} and composing with $g'$ we have that $\omega_V \circ g' - \bom_V \circ g' \in I \otimes_\bC \im \left( d \omega_V^\vee \right)^\vee|_p$.
\end{proof}

\subsection{$\Omega$-compatibility} 

We begin with the following lemma, describing how normal bundles behave with respect 
to Kirwan blowups. It will be used repeatedly in the rest of this subsection.
\begin{lem} \label{conormal transform}
Let $\Phi \colon V \hookrightarrow W$ be a $G$-equivariant embedding of smooth affine schemes. Let $N_{V/W}$ be the normal bundle of $V$ in $W$, and let $\mathrm{bl}(N_{V/W})$ as in Definition~\ref{blow-up bundle}.
Then there is a natural isomorphism $\mathrm{bl}(N_{V/W}) \cong N_{\hV/\hat{W}}$.
\end{lem}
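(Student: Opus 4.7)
The plan is to realize both $\mathrm{bl}(N_{V/W})$ and $N_{\hV/\hW}$ as the kernel of the same natural surjection out of $\pi_V^* N_{V/W}$, by comparing the blowup tangent sequence \eqref{5.6} for $\pi_V \colon \hV \to V$ with its counterpart for $\pi_W \colon \hW \to W$ restricted to $\hV$, and then extracting normal bundles via the snake lemma.

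First I would establish that $\hV$ is the strict transform of $V$ inside $\hW$. Equivariance gives $V^G = V \cap W^G$, and Remark~\ref{passage to formal}(1) identifies the Kirwan blowup of a smooth scheme with the blowup along its fixed locus (restricted to the semistable locus). It remains to check that $V$ and $W^G$ meet cleanly: at any $p \in V^G$, reductivity of $G$ gives $T_p V \cap T_p W^G = (T_p V)^{fix} = T_p V^G$. From this I would also deduce that $E_W$ and $\hV$ meet transversally in $\hW$ with intersection $E_V$, which is what is needed for the restriction to $\hV$ of the $\pi_W$-tangent sequence to remain exact.

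Next I would assemble the commutative diagram of exact rows
\begin{align*}
\xymatrix{
0 \ar[r] & T_{\hV} \ar[r] \ar[d] & \pi_V^* T_V \ar[r] \ar[d] & i_{E_V *} T_{E_V / V^G}(E_V) \ar[r] \ar[d] & 0\\
0 \ar[r] & T_{\hW}|_{\hV} \ar[r] & \pi_V^* T_W|_V \ar[r] & i_{E_V *} T_{E_W / W^G}(E_W)|_{E_V} \ar[r] & 0,
}
\end{align*}
whose vertical maps are induced by the inclusion $V \hookrightarrow W$ and are all injective. The snake lemma then yields
$$0 \lra N_{\hV / \hW} \lra \pi_V^* N_{V/W} \lra K \lra 0,$$
where $K$ denotes the cokernel of the rightmost vertical arrow.

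Finally I would identify $K$ by applying a second snake lemma to the pair of Euler-type sequences \eqref{5.5} for $V$ and (the restriction to $V^G$ of) its counterpart for $W$. Using the identification $\oO_{E_V}(E_V) = \oO_{E_W}(E_W)|_{E_V}$ together with the reductive splitting $0 \to T_V|_{V^G}^{mv} \to T_W|_{V^G}^{mv} \to N_{V/W}|_{V^G}^{mv} \to 0$, one obtains $K \cong \pi_V^*(N_{V/W}|_{V^G}^{mv})$, and tracing through the diagrams confirms that the induced surjection $\pi_V^* N_{V/W} \to K$ is the very projection appearing in Definition~\ref{blow-up bundle}. Thus $N_{\hV/\hW} = \ker\bl \pi_V^* N_{V/W} \to \pi_V^*(N_{V/W}|_{V^G}^{mv}) \br = \mathrm{bl}(N_{V/W})$. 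The main obstacle is this second identification of $K$: one has to check carefully that the naturality maps between the blowup tangent sequences for $V$ and $W$ assemble correctly and that the resulting quotient matches the one in the definition of the blowup bundle.
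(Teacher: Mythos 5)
Your proof is correct but takes a genuinely different route from the paper's. The paper argues by an explicit local coordinate computation: it localizes at a point $x\in V^G$, chooses equivariant \'etale coordinates on $W$ adapted to the embedding $V\hookrightarrow W$ and to the fixed/moving decomposition of each tangent space, writes down the standard blowup charts, and directly compares the resulting local frames of $N_{\hV/\hW}$ and $\mathrm{bl}(N_{V/W})$, checking that they match modulo $I_{\hV}$. Your argument instead works globally and coordinate-free: the observation $T_pV \cap T_pW^G = (T_pV)^{fix} = T_pV^G$ (an application of reductivity) shows $V$ meets $W^G$ cleanly, so that $\mathrm{bl}_{V^G}V$ is the strict transform of $V$ in $\mathrm{bl}_{W^G}W$ and $E_W\cap\hV = E_V$ transversally; then the snake lemma applied to the two tangent sequences of type \eqref{5.6}, followed by a second snake lemma applied to the two Euler sequences of type \eqref{5.5} using the identification $\oO_{E_V}(E_V)\cong\oO_{E_W}(E_W)|_{E_V}$, realizes $N_{\hV/\hW}$ as the kernel of the very surjection $\pi_V^*N_{V/W}\to\pi_V^*(N_{V/W}|_{V^G}^{mv})$ defining $\mathrm{bl}(N_{V/W})$. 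What each approach buys: the coordinate computation makes the local geometry of the Kirwan blowup concrete and self-contained, while your diagram chase makes the isomorphism visibly natural and avoids chart-by-chart verification. The step you flag as the main obstacle is indeed the crux: one must check that the connecting map $\pi_V^*N_{V/W}\to K$ from the first snake lemma agrees, under the Euler-sequence identification of $K$ with $\pi_V^*(N_{V/W}|_{V^G}^{mv})$, with the restriction-then-project map of Definition~\ref{blow-up bundle}; this works out because the passages to moving parts and to normal directions are both projections onto $G$-invariant direct summands of $T_W|_{V^G}$ and hence commute. The argument is sound.
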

\begin{proof}
Let $x \in V^G$. Up to shrinking, we have a commutative diagram
\begin{align*}
\xymatrix{
V \ar[d]^-{\Phi} \ar[r] & T_x V = \bA^n \ar[d]^-{d\Phi}\\
W \ar[r] & T_x W = \bA^{n+m},
}
\end{align*}
where the maps $V \to T_x V, \ W \to T_x W$ are equivariant \'{e}tale and the $G$-action on the tangent spaces is linear. Since $G$ is reductive, we may pick coordinates $x_1, \dots, x_n$ on $\bA^n$ on which $G$ acts linearly and extra coordinates $x_{n+1}, \dots, x_{n+m}$ with a linear $G$-action on $\bA^{n+m}$ such that the embedding $\bA^n \to \bA^{n+m}$ takes the canonical form $(x_1, \dots, x_n) \to (x_1, \dots, x_n, 0, \dots, 0)$. In particular, we get \'{e}tale coordinates $x_1|_W, \dots, x_{n+m}|_W$ on $W$, $x_1|_V, \dots, x_n|_V$ on $V$ and may also arrange that $I_V = ( x_{n+1}|_W, \dots, x_{n+m}|_W )$.

In what follows, we often write just $x_i$ in place of $x_i |_W$ or $x_i |_V$ by abuse of notation.

Let us assume that $x_1, \dots, x_p$ and $x_{n+1}, \dots, x_{n+q}$ are moving and $x_{p+1}, \dots x_{n}$ and $x_{n+q+1}, \dots, x_{n+m}$ are fixed by $G$.

Since the question is local, we may localize at $x$ and assume that $V$ and $W = \Spec A$ are local and the maximal ideal of $A$ is $$\fm = ( x_1, \dots, x_p, x_{n+1}, \dots, x_{n+q} ).$$

Now $\hat{W}$ is covered by open affines of the form (for $k=1,\dots,p+q$)
\begin{align}
R_k^n = \Spec A[T_{k,1}, \dots, T_{k,p+q}][\xi_k] / (T_{k,k} - 1),
\end{align}
where $x_i = \xi_k T_{k,i}$ if $i \leq p$, $x_{n-p+i} = \xi_k T_{k,i}$ if $i >p$ and $\xi_k=0$ is the exceptional divisor in $R_k^n$.
It is easy to see that $\hV$ is covered by such affines for $k \leq p$ and in each such we have that 
\begin{align*} 
I_{\hV}|_{R^n_k} & = \left( \frac{1}{\xi_k} x_{n+1}, \dots, \frac{1}{\xi_k} x_{n+q}, x_{n+q+1}, \dots, x_{n+m} \right) \\
 & = \left( T_{k,p+1}, \dots, T_{k,p+q}, x_{n+q+1}, \dots, x_{n+m} \right).
\end{align*}
In particular, $N_{\hV / \hat{W}} \vert_{R_k^n}$ has a basis of sections given by $ \frac{\partial}{\partial T_{k,p+i}}, \frac{\partial}{\partial x_{n+q+j}}$.

Since $N_{V/W}$ has a basis by $\frac{\partial}{\partial x_{n+i}}$, we see that $\mathrm{bl}( N_{V/W} )$ has a basis by ${\xi_k} \frac{\partial}{\partial x_{n+i}}$ for $1 \leq i \leq q$ and $\frac{\partial}{\partial x_{n+j}}$ for $q+1 \leq j \leq m$. But $x_{n+i} = \xi_k T_{k,p+i}$ implies that $\frac{1}{\xi_k} d_{\hat{W}} x_{n+i} = d_{\hat{W}} T_{k,p+i} \ \left( \text{mod } I_{\hV} \right)$ and thus by dualizing ${\xi_k} \frac{\partial}{\partial x_{n+i}} = \frac{\partial}{\partial T_{k,p+i}} \ \left( \text{mod } I_{\hV} \right)$. We see that the frames of the two bundles $N_{\hV/\hat{W}}$ and $\mathrm{bl}( N_{V/W} )$ match. This concludes the proof. \end{proof}

For the purposes of comparing obstruction theories obtained by embeddings of d-critical charts and their Kirwan blowups and slices thereof, we introduce the following definition.

\begin{defi} \emph{($\Omega$-compatibility)} \label{omega compatibility} 
Let $U \sub V \xrightarrow{\Phi} W$ be a sequence of $G$-equivariant embeddings of affine $G$-schemes such that $U$ is in standard form for data $\Lambda_V = (U, V, F_V, \omega_V, D_V, \phi_V)$ and $\Lambda_W = (W, F_W, \omega_W, D_W, \phi_W)$. We say that $\omega_V$ and $\omega_W$ are $\Omega$-compatible via $\Phi$ if the following hold:
\begin{enumerate}
\item $D_W$ pulls back to $D_V$ under $\Phi$.
\item The embedding $\Phi \colon V \to W$ induces a surjective equivariant morphism $\eta_\Phi \colon F_W|_V \to F_V$, compatible with $\phi_W|_V$ and $\phi_V$, such that $\eta_\Phi (\omega_W|_V)$ is $\Omega$-equivalent to $\omega_V$. 
\item Let  $\Ob_W := \coker \left( d_W \omega_W^\vee \right)^\vee \vert_{U}, \ \Ob_V := \coker \left( d_V \omega_V^\vee \right)^\vee \vert_{U}$. Then $\eta_\Phi$ induces an isomorphism 
\begin{align*}
\Phi^\ck \colon \Ob_W \lra  \Ob_V.
\end{align*}
\end{enumerate}
\end{defi}

\begin{rmk}
It makes sense to talk about an induced surjection $\eta_\Phi$, since the local data we are considering arise either as the data of an embedding of d-critical charts or are obtained by such an embedding by performing Kirwan blowups and taking slices. In the former case $\eta_\Phi$ is just pullback of differential forms and in the latter it is canonically induced starting from pulling back differential forms and then blowing up or taking slices.
\end{rmk}

The motivation behind the above definition is the following lemma.

\begin{lem} \label{omega compat for d-crit}
Let $(U,V,f,i) \xrightarrow{\Phi} (R,W,g,j)$ be a $G$-equivariant embedding of invariant d-critical charts, where $V\sub W$ as before is a pair of smooth
$G$-schemes. Then $\omega_g = dg$ and $\omega_f = df$ are $\Omega$-compatible. 
\end{lem}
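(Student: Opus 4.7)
The plan is to verify the three conditions of Definition~\ref{omega compatibility} in order. In the d-critical chart setting the ambient data simplify to $F_V = \Omega_V$, $F_W = \Omega_W$, $D_V = D_W = 0$, $\phi_V = \sigma_V$, $\phi_W = \sigma_W$ (the action morphisms), $\omega_V = df$, $\omega_W = dg$; consequently $(d_V\omega_V^\vee)^\vee|_U$ and $(d_W\omega_W^\vee)^\vee|_U$ are the Hessian maps $H_f|_U$ and $H_g|_U$.

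Conditions (1) and (2) are essentially immediate. Since $D_V = D_W = 0$, condition (1) is trivial. For (2), I would take $\eta_\Phi$ to be the canonical $G$-equivariant surjection $\Phi^\ast \colon \Omega_W|_V \twoheadrightarrow \Omega_V$ coming from the conormal sequence $0 \to N^\vee_{V/W} \to \Omega_W|_V \to \Omega_V \to 0$. Compatibility with $\sigma_W$, $\sigma_V$ is the naturality of the action map with respect to $G$-equivariant morphisms. Using $f = g \circ \Phi$ gives $\eta_\Phi(\omega_W|_V) = d(g \circ \Phi) = df = \omega_V$ on the nose, so these sections are trivially $\Omega$-equivalent (take $A = B = 0$ in Definition~\ref{omega equivalence}).

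The substantive step is condition (3): the induced map $\Phi^\ck \colon \coker(H_g|_U) \to \coker(H_f|_U)$ must be an isomorphism. I would argue \'etale-locally on $U$. Around each $G$-fixed closed point of $U$, Luna's slice theorem supplies $G$-equivariant \'etale coordinates $x_1,\ldots,x_n$ on $V$ together with transverse coordinates $y_1,\ldots,y_m$ on $W$ with $V = (y = 0)$, so that $g|_V = f$ in these coordinates. Then $H_g|_U$ acquires a block decomposition with tangential block $H_f|_U$, mixed block $B|_U$, and normal block $H_{yy}|_U$, and $\eta_\Phi|_U$ is the projection onto the $\Omega_V|_U$ factor. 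Well-definedness of $\Phi^\ck$ reduces to the containment $\im B|_U \subseteq \im H_f|_U$: since $U \subseteq \Crit(g)$, each $\partial_{y_j} g$ lies in the ideal $I_U = (y_1,\ldots,y_m, f_{x_1},\ldots,f_{x_n}) \sub \oO_W$ and may be written as $\sum_i a_{ji} y_i + \sum_k b_{jk} f_{x_k}$; differentiating in $x_l$ and restricting to $U$ yields $B|_U = H_f|_U \cdot b^T|_U$. Surjectivity of $\Phi^\ck$ is then immediate from that of $\eta_\Phi$.

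The main obstacle is injectivity of $\Phi^\ck$, i.e.\ establishing $N^\vee_{V/W}|_U \subseteq \im H_g|_U$. A direct block-matrix calculation, attempting to lift an element $(0, \alpha) \in \Omega_V|_U \oplus N^\vee_{V/W}|_U$ through $H_g|_U$, reduces this to surjectivity (hence bijectivity, by equal ranks) of the Schur complement $H_{yy} - b H_f b^T$ as a morphism $N_{V/W}|_U \to N^\vee_{V/W}|_U$. This is the algebraic incarnation of the non-degeneracy of $g - f$ in the normal direction to $V$ and is the substantive content of Joyce's comparison for d-critical chart embeddings. The cleanest route is to invoke the normal form embedded in the proof of \cite[Theorem~2.20]{JoyceDCrit}: after a further $G$-equivariant \'etale refinement one can choose coordinates in which $g = f + Q(y)$ for an invariant non-degenerate quadratic form $Q$ in the normal variables, whence $b \equiv 0$ and $H_{yy} = 2Q$ is manifestly an isomorphism. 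Alternatively, one can appeal to the chart-independent nature of the d-critical obstruction sheaf to conclude that $\Phi^\ck$ must coincide with the intrinsic identification. Either argument yields condition (3) and completes the proof.
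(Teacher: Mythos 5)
Your verification of conditions (1) and (2) matches the paper exactly: $D_V = D_W = 0$, $\eta_\Phi$ is pullback of forms along the conormal sequence, and $\eta_\Phi(\omega_g|_V) = d(g\circ\Phi) = df = \omega_f$ on the nose.

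Where you diverge is condition (3), and here the paper's route is substantially shorter. Rather than decompose the Hessian into blocks, prove $\im B^T|_U \subseteq \im H_f|_U$, and then argue surjectivity of the Schur complement $H_{yy} - bH_fb^T$ by invoking the Joyce normal form $g = f + Q(y)$, the paper simply uses the standard cotangent sequence of the critical locus: since $I_U \subset \oO_W$ is generated by the partials of $g$, the composite $T_W|_U \to I_U/I_U^2 \to \Omega_W|_U$ is exactly $(d_W\omega_g^\vee)^\vee = H_g|_U$, so one has an exact sequence
\begin{align*}
T_W|_U \xrightarrow{H_g} \Omega_W|_U \longrightarrow \Omega_U \longrightarrow 0,
\end{align*}
and likewise for $V$ and $f$. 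Hence both cokernels are canonically $\Omega_U$, and since $\eta_\Phi$ commutes with the surjections onto $\Omega_U$, the induced map on cokernels is tautologically the identity on $\Omega_U$ — in particular an isomorphism, and automatically well-defined (because $\eta_\Phi$ sends $\ker(\Omega_W|_U \to \Omega_U) = \im H_g|_U$ into $\ker(\Omega_V|_U \to \Omega_U) = \im H_f|_U$). Your second suggested alternative (``the chart-independent nature of the d-critical obstruction sheaf'') is essentially this observation, but as stated it is a pointer rather than a proof; the cotangent-sequence argument is how one makes it precise without shrinking, coordinate choices, or appeal to the Morse-lemma normal form. Your block-matrix computation is correct and instructive (it makes the well-definedness of $\Phi^\ck$ concrete, and the factorization $B|_U = H_f b^T|_U$ is a clean local verification), but invoking the quadratic normal form from \cite{JoyceDCrit} to kill the Schur complement is a heavier tool than the situation requires and forces an extra \'etale shrinking that the paper avoids.
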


\begin{proof} 
$\eta_\Phi$ is pullback of differential forms and $\eta_\Phi( \omega_g|_V ) = \omega_f$. Moreover, we have exact sequences
\begin{align*}
\xymatrix{
T_W|_U \ar[r]^-{\left( d_W\omega_g^\vee \right)^\vee} & \Omega_W|_U \ar[d]^-{\eta_\Phi} \ar[r] & \Omega_U \ar@{=}[d] \ar[r] & 0 \\
T_V|_U \ar[r]^-{\left( d_V\omega_f^\vee \right)^\vee} & \Omega_V|_U \ar[r] & \Omega_U \ar[r] & 0 
}
\end{align*}
from which we deduce that $\coker \left( d_W\omega_g^\vee \right)^\vee |_U = \coker ( d_V\omega_f^\vee )^\vee |_U = \Omega_U$. We obtain an isomorphism on cokernels induced naturally by $\eta_\Phi$.
\end{proof}

In the rest of this subsection, we consider the following situation:

\begin{notation}
Let $U \sub V \xrightarrow{\Phi} W$ be a sequence of $G$-equivariant embeddings of affine $G$-schemes such that $U$ is in standard form for data $\Lambda_V = (U, V, F_V, \omega_V, D_V, \phi_V)$ and $\Lambda_W = (U, W, F_W, \omega_W, D_W, \phi_W)$ and $\omega_V$ and $\omega_W$ are $\Omega$-compatible.
\end{notation}

We now check that one may compare obstruction theories given by different data of a local model with $\Omega$-compatible sections.

\begin{lem} \label{obs assign for omega compat}
Let $\Lambda_V$, $\Lambda_W$ be as above. Consider the two complexes $E_W, E_V$
\begin{align*}
(L_U^{\geq -1})^\vee \lra & E_W = [ T_W |_U \xrightarrow{(d_W \omega_W^\vee)^\vee} F_W |_U ], \\
(L_U^{\geq -1})^\vee \lra & E_V = [ T_V |_U \xrightarrow{(d_V \omega_V^\vee)^\vee} F_V |_U ],
\end{align*}
on $U$.
The obstruction theories induced by the dual complexes $E_W^\vee, E_V^\vee$ give the same obstruction assignments via $\Phi^\ck$.
\end{lem}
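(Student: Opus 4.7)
The plan is to reduce everything to the two tools just built: Lemma~\ref{compute obs assign} (which computes each obstruction assignment as $\rho(s \circ g')$ for a lift $g'$) and Lemma~\ref{Obstruction sheaf comparison} (which says $\Omega$-equivalent sections give the same obstruction assignment). The $\Omega$-compatibility hypothesis is designed precisely to bridge these two.

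First, given an infinitesimal lifting problem $g \colon \Delta \to U$, $\bar g \colon \bar\Delta \to C$, I use smoothness of $V$ to choose a lift $g'_V \colon \bar\Delta \to V$ of $g$, and then set $g'_W \defeq \Phi \circ g'_V \colon \bar\Delta \to W$, which is automatically a lift of $g$ under $U \sub V \xhookrightarrow{\Phi} W$. By Lemma~\ref{compute obs assign} applied to each of the two perfect obstruction theories,
\begin{align*}
ob_U(E_V^\vee,g,\Delta,\bar\Delta) &= \rho_V(\omega_V \circ g'_V) \in \Ob_V \otimes_\bC I, \\
ob_U(E_W^\vee,g,\Delta,\bar\Delta) &= \rho_W(\omega_W \circ g'_W) = \rho_W\bl \omega_W|_V \circ g'_V\br \in \Ob_W \otimes_\bC I,
\end{align*}
where $\rho_V, \rho_W$ are the natural projections from $F_V|_U$, $F_W|_U$ onto the respective obstruction sheaves.

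Next, set $\bom_V \defeq \eta_\Phi(\omega_W|_V) \in \Gamma(V,F_V)$. By part (2) of $\Omega$-compatibility, $\bom_V$ is $\Omega$-equivalent to $\omega_V$, so Lemma~\ref{Obstruction sheaf comparison} gives $\coker\bl d_V\bom_V^\vee\br^\vee|_U = \coker\bl d_V\omega_V^\vee\br^\vee|_U = \Ob_V$ and the two induced obstruction assignments on $U$ agree; in particular
\begin{equation*}
\rho_V(\omega_V \circ g'_V) = \bar\rho_V(\bom_V \circ g'_V),
\end{equation*}
where $\bar\rho_V \colon F_V|_U \to \Ob_V$ is the projection coming from $\bom_V$.

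It remains to match the $W$-side with the $V$-side through $\Phi^\ck$. By part (3) of $\Omega$-compatibility, $\Phi^\ck \colon \Ob_W \to \Ob_V$ is, by construction, the map induced by $\eta_\Phi \colon F_W|_U \to F_V|_U$ on cokernels, so $\Phi^\ck \circ \rho_W = \bar\rho_V \circ \eta_\Phi$. Since $\bom_V = \eta_\Phi(\omega_W|_V)$, we have $\eta_\Phi(\omega_W|_V \circ g'_V) = \bom_V \circ g'_V$, and therefore
\begin{equation*}
\Phi^\ck\bl ob_U(E_W^\vee,g,\Delta,\bar\Delta) \br = \bar\rho_V(\bom_V \circ g'_V) = \rho_V(\omega_V \circ g'_V) = ob_U(E_V^\vee,g,\Delta,\bar\Delta),
\end{equation*}
as required. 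The main technical point — the fact that $\eta_\Phi$ sends $\im(d_W\omega_W^\vee)^\vee|_U$ into $\im(d_V\bom_V^\vee)^\vee|_U$ and so induces the isomorphism $\Phi^\ck$ — is already built into Definition~\ref{omega compatibility}; the only substantive work here is the compatibility of the lifts via $\Phi$ and the invocation of Lemma~\ref{Obstruction sheaf comparison} to swap $\bom_V$ for $\omega_V$, both of which are routine.
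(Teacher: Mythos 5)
Your proof is correct and follows essentially the same strategy as the paper: reduce via Lemma~\ref{compute obs assign} to comparing $\rho_V(\omega_V\circ g'_V)$ with $\Phi^\ck\bigl(\rho_W(\omega_W\circ\Phi\circ g'_V)\bigr)$, use $\Omega$-equivalence via Lemma~\ref{Obstruction sheaf comparison} to handle the discrepancy between $\eta_\Phi(\omega_W|_V)$ and $\omega_V$, and conclude from the compatibility $\Phi^\ck\circ\rho_W=\rho_V\circ\eta_\Phi$. The only cosmetic difference is that you explicitly carry along $\bom_V$ where the paper abbreviates by saying ``we may assume $\eta_\Phi(\omega_W|_V)=\omega_V$.''
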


\begin{proof} 
Suppose we have an infinitesimal lifting problem at a closed point $p \in U$. Let 
$$\rho_W \colon I \otimes F_W|_p \to I \otimes \Ob_W |_p\quad \text{and}\quad \rho_V \colon I \otimes F_V|_p \to I \otimes \Ob_V |_p$$ be the induced quotient morphisms. By Lemma~\ref{compute obs assign}, we need to show 
\begin{align} \label{6.30}
\Phi^\ck \left( \rho_W ( \omega_W \circ \Phi \circ g') \right) = \rho_V ( \omega_V \circ g')
\end{align}
Since $\eta_\Phi(\omega_W|_V)$ and $\omega_V$ are $\Omega$-equivalent, by Lemma~\ref{Obstruction sheaf comparison}, we may assume that $\eta_\Phi(\omega_W|_V) = \omega_V$.

Because of $\eta_\Phi \circ \omega_W \circ \Phi = \omega_V$, $\Phi^\ck \circ \rho_W = \rho_V \circ \eta_\Phi$, and the commutative diagram
\begin{align*}
\xymatrix{
\bar{\Delta} \ar[r]^-{g'} & V \ar[r]^-{\omega_V} \ar[d]^-{\Phi} & F_V \ar[r]^-{\rho_V} & \Ob_V \\
\Delta \ar[ur]_-g \ar[u] & W \ar[r]_-{\omega_W} & F_W \ar[u]_-{\eta_\Phi} \ar[r]_-{\rho_W} & \Ob_W, \ar[u]_-{\Phi^\ck}
}
\end{align*}
\eqref{6.30} follows. \end{proof}

We now show that $\Omega$-compatibility is preserved under taking Kirwan blowups and slices thereof. We begin with a preparatory lemma. 

\begin{lem} \label{6.21}
Let $\Lambda_V$, $\Lambda_W$ as above. Then the induced embedding of Kirwan blowups $\hat{\Phi} \colon \hV \to \hW$ induces a morphism $\hat{\Phi}^\ck \colon \Ob_{\hW} \to \Ob_{\hV}$. The same is true for an \'{e}tale slice of a closed point of $\hU$ with closed orbit. 
\end{lem}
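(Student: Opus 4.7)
\smallskip

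\noindent\emph{Proof plan.}
My plan is to deduce the lemma by first establishing that $\omega_{\hV}$ and $\omega_{\hW}$ are $\Omega$-compatible via the induced embedding $\hat\Phi$, and then reading $\hat\Phi^\ck$ off Definition~\ref{omega compatibility}(3). The embedding $\hat\Phi \colon \hV \to \hW$ exists by functoriality of the Kirwan blowup, since $\Phi$ sends $V^G$ into $W^G$. To build the surjection $\eta_{\hat\Phi} \colon F_{\hW}|_{\hV} \to F_{\hV}$ required by Definition~\ref{omega compatibility}(2), I would pull $\eta_\Phi$ back along $\pi \colon \hV \to V$ to obtain $\pi^*(F_W|_V) \to \pi^* F_V$ on $\hV$, and verify, using the kernel presentation of the blowup bundles in Definition~\ref{blow-up bundle}, that it restricts to a morphism $F_{\hW}|_{\hV} \to F_{\hV}$. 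This reduces to checking that $\eta_\Phi$ intertwines the decompositions $F_\bullet|_{V^G} = F_\bullet|_{V^G}^{fix} \oplus F_\bullet|_{V^G}^{mv}$, which holds by equivariance of $\eta_\Phi$ together with complete reducibility of $G$-representations.

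Next, I would verify the three conditions of $\Omega$-compatibility for $\hat\Phi$. Condition (1), that $\hat\Phi^* D_{\hW} = D_{\hV}$, follows from the formulas $D_{\hV} = 2 E_V + D_V$ and $D_{\hW} = 2 E_W + D_W$ of Lemma~\ref{Obs bundle for blow}, the hypothesis $\Phi^* D_W = D_V$, and the identity $\hat\Phi^* E_W = E_V$, which can be read off the explicit local coordinate description in the proof of Lemma~\ref{conormal transform}. The $\phi$-compatibility part of condition (2) is inherited from $\phi_V \circ \eta_\Phi = \phi_W|_V$ via the natural construction of $\phi_{\hV}, \phi_{\hW}$ in the proof of Lemma~\ref{Obs bundle for blow}. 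For the $\Omega$-equivalence of $\eta_{\hat\Phi}(\omega_{\hW}|_{\hV})$ with $\omega_{\hV}$, I would invoke Lemma~\ref{comparison}: the sections $\eta_\Phi(\omega_W|_V)$ and $\omega_V$ on $V$ are $\Omega$-equivalent by the hypothesis of $\Omega$-compatibility for $\Phi$, and by naturality of the blowup-section construction their Kirwan blowups are precisely $\eta_{\hat\Phi}(\omega_{\hW}|_{\hV})$ and $\omega_{\hV}$, so Lemma~\ref{comparison} transports the $\Omega$-equivalence onto $\hV$.

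With (1) and (2) in hand, condition (3) and with it the morphism $\hat\Phi^\ck \colon \Ob_{\hW} \to \Ob_{\hV}$ is obtained by composing the map on cokernels tautologically induced by $\eta_{\hat\Phi}$ with the isomorphism on cokernels supplied by Lemma~\ref{Obstruction sheaf comparison} applied to the $\Omega$-equivalent pair $\eta_{\hat\Phi}(\omega_{\hW}|_{\hV})$ and $\omega_{\hV}$. For the slice statement, the identical argument goes through verbatim, using Lemma~\ref{comparison for slice} in place of Lemma~\ref{comparison} to transport the $\Omega$-equivalence to an \'etale slice $S \subset \hV$.

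The main obstacle I anticipate is the construction of $\eta_{\hat\Phi}$ in the first step, specifically verifying surjectivity along the exceptional divisor $E_V$. Away from $E_V$ the map is simply $\eta_\Phi$ and is surjective by hypothesis, but on $E_V$ one must unwind the kernel definition of the blowup bundles and use equivariance plus semisimplicity to see that the induced map on moving-part quotients over $V^G$ remains surjective. This in turn reduces to the fact that $\eta_\Phi$ preserves the $G$-isotypic decomposition when restricted from $W^G$ to $V^G$, which should be automatic from equivariance but must be checked carefully.

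\smallskip
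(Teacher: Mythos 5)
There is a genuine gap, and it lies precisely where you take a step for granted. You say that once $\eta_{\hat\Phi}$ is constructed and the $\Omega$-equivalence of $\eta_{\hat\Phi}(\omega_{\hW}|_{\hV})$ with $\omega_{\hV}$ established, the morphism $\hat\Phi^\ck$ is obtained by composing ``the map on cokernels tautologically induced by $\eta_{\hat\Phi}$'' with the identification from Lemma~\ref{Obstruction sheaf comparison}. But there is no tautological map on cokernels. The two-term complexes are related by a square
\begin{align*}
\xymatrix{
T_{\hW}|_{\hU} \ar[r]^-{(d_{\hW} \omega_{\hW}^\vee)^\vee} & F_{\hW}|_{\hU} \ar[d]^-{\eta_{\hat{\Phi}}} \\
T_{\hV}|_{\hU} \ar[u] \ar[r]_-{(d_{\hV} \omega_{\hV}^\vee)^\vee} & F_{\hV}|_{\hU},
}
\end{align*}
in which the left vertical arrow $T_{\hV}|_{\hU} \hookrightarrow T_{\hW}|_{\hU}$ points the \emph{wrong way}. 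Sections of $T_{\hW}|_{\hU}$ lying in a normal complement $N_{\hV/\hW}|_{\hU}$ produce elements of $\im\, (d_{\hW}\omega_{\hW}^\vee)^\vee|_{\hU}$ whose images under $\eta_{\hat\Phi}$ need not a priori fall into $\im\, (d_{\hV}\omega_{\hV}^\vee)^\vee|_{\hU}$; verifying that they do is the whole content of this lemma, and the paper explicitly flags the point in the remark following Lemma~\ref{omega compat for blowups} (``It might seem counterintuitive that there is an induced map $\hat\Phi^\ck$, given that \ldots the derivative arrow \ldots goes the wrong way''). The paper's proof uses the $\Omega$-compatibility of $(\omega_V,\omega_W)$ on $V,W$ to produce a morphism $\alpha\colon N_{V/W}|_U \to T_V|_U$ satisfying $\eta_\Phi \circ (d_W\omega_W^\vee)^\vee|_{N_{V/W}} = (d_V\omega_V^\vee)^\vee \circ \alpha$, then lifts $\alpha$ to $\hat\alpha\colon N_{\hV/\hW}|_{\hU} \to T_{\hV}|_{\hU}$ using the identification $\mathrm{bl}(N_{V/W}) \cong N_{\hV/\hW}$ from Lemma~\ref{conormal transform}. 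This construction is the missing ingredient in your plan.

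Two further points. First, you aim to verify full $\Omega$-compatibility for the blowups, but condition~(3) of Definition~\ref{omega compatibility} asks for an isomorphism, whereas Lemma~\ref{6.21} asserts only a morphism; the isomorphism statement is the subsequent Lemma~\ref{omega compat for blowups}, proved by a separate length-counting argument over Artinian test schemes, so your plan proves the wrong (stronger, and not yet available) statement. Second, you identify surjectivity of $\eta_{\hat\Phi}$ along the exceptional divisor as the main obstacle; that step is comparatively routine given Lemma~\ref{Obs bundle for blow} and equivariance, while the wrong-way-arrow problem above --- which you do not mention --- is the real difficulty.
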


\begin{proof}
We need to show that $\eta_{\hat{\Phi}}$ maps $\im (d_{\hW} \omega_{\hW}^\vee)^\vee|_{\hU}$ to $\im (d_{\hV} \omega_{\hV}^\vee)^\vee|_{\hU}$. Since $\eta_{\hat{\Phi}}$ maps $T_{\hV}|_{\hU}$ into $\im (d_{\hV} \omega_{\hV}^\vee)^\vee|_{\hU}$, it suffices to show that the same is true for $N_{\hV/\hW}|_{\hU}$, for any local splitting $T_{\hW} |_{\hV} = T_{\hV} \oplus N_{\hV / \hW}$. 

$\eta_\Phi$ satisfies the same requirement, so by the same reasoning we may find a morphism $\alpha \colon N_{V/W}|_U \to T_V|_U$ such that the following diagram commutes (for a splitting $T_W|_V = T_V \oplus N_{V/W}$)
\begin{align*}
\xymatrix{
N_{V/W}|_U \ar[r] \ar[dr]_-\alpha & T_W|_U \ar[r]^-{(d_{W} \omega_{W}^\vee)^\vee} & F_W|_U \ar[d]^-{\eta_\Phi} \\
& T_V |_U \ar[r]_-{(d_{V} \omega_{V}^\vee)^\vee} & F_V|_U.
}
\end{align*}
By Lemma~\ref{comparison} and Lemma~\ref{conormal transform}, we have that $\mathrm{bl}(T_V) \sub T_{\hV}$, $\mathrm{bl}(T_W|_V) \sub T_{\hW} |_{\hV}$ are subbundles and $\mathrm{bl}(N_{V/W}) = N_{\hV/\hW}$. We therefore obtain a commutative diagram
\begin{align*}
\xymatrix{
N_{\hV/\hW}|_U \ar[r] \ar[dr]_-{\hat{\alpha}} & \mathrm{bl}(T_W|_U) \ar[r] & T_{\hW}|_{\hU} \ar[r]^-{(d_{\hW} \omega_{\hW}^\vee)^\vee} & F_{\hW}|_{\hU} \ar[d]^-{\eta_{\hat{\Phi}}} \\
& \mathrm{bl}(T_V|_U) \ar[r] & T_{\hV} |_{\hU} \ar[r]_-{(d_{\hV} \omega_{\hV}^\vee)^\vee} & F_{\hV}|_{\hU},
}
\end{align*}
where the composition $N_{\hV/\hW}|_{\hU} \to \mathrm{bl}(T_W|_U) \to T_{\hW}|_{\hU}$ comes from the induced splitting $T_{\hW}|_{\hV} = T_{\hV} \oplus N_{\hV/\hW}$ and the desired conclusion follows.

Let now $S$ be an \'{e}tale slice for $\hat{W}$ of a closed point $x \in \hV \xrightarrow{\Phi} \hat{W}$ with stabilizer $H$ and $T = \hat{V} \cap S$, $R = \hat{U} \cap T = \hat{U} \cap S$. Then we have induced data $\Lambda_T$, $\Lambda_S$ of a local model on $T$ and $S$ (cf. Lemma~\ref{Obs bundle for blow}) respectively such that $R$ is in standard form.

By the definition of $\omega_S, \omega_T$ we see that the two horizontal compositions in the diagram
\begin{align*}
\xymatrix{
T_S |_R \ar[r] & T_{\hW} |_R \ar[r] & F_{\hW} |_R \ar[r] \ar[d]^-{\eta_\Phi} & F_S |_R \ar[d]^-{\eta_\Psi} \ar[r]^-{\rho_S} & \Ob_S \\
T_T |_R \ar[r] \ar[u] & T_{\hV} |_R \ar[r] \ar[u] & F_{\hV} |_R \ar[r] & F_T |_R \ar[r]^-{\rho_T} & \Ob_T
}
\end{align*}
equal $( d_S \omega_S^\vee )^\vee$ and $( d_T \omega_T^\vee)^\vee$. By Lemma~\ref{lemma complex} and an identical argument to Lemma~\ref{comparison for slice}, in order to show that $\eta_\Psi$ maps the image $\im ( d_S \omega_S^\vee )^\vee|_R$ into $\im ( d_T \omega_T^\vee)^\vee|_R$ we may replace all bundles by their reduced analogues and get a commutative diagram of sheaves
\begin{align*}
\xymatrix{
T_S |_R^\rred \ar[r] & T_{\hW} |_R^\rred \ar[r] & F_{\hW} |_R^\rred \ar[r] \ar[d]^-{\eta_\Phi} & F_S |_R^\rred \ar[d]^-{\eta_\Psi} \\
T_T |_R^\rred \ar[r] \ar[u] & T_{\hV} |_R^\rred \ar[r] \ar[u] & F_{\hV} |_R^\rred \ar[r] & F_T |_R^\rred
}
\end{align*}
where all horizontal arrows except those in the middle are isomorphisms. It follows immediately that we obtain an induced morphism $\Psi^\ck \colon \Ob_S \to \Ob_T$. \end{proof}

\begin{lem} \label{omega compat for blowups}
Let $\Lambda_V$, $\Lambda_W$ be as above. Then the induced sections $\omega_{\hV}$ and $\omega_{\hW}$ (cf. Lemma~\ref{Obs bundle for blow}) are $\Omega$-compatible.
\end{lem}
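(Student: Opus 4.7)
The plan is to verify each of the three conditions of Definition~\ref{omega compatibility} for the pair $(\omega_{\hV}, \omega_{\hW})$ via the induced embedding $\hat{\Phi} \colon \hV \hookrightarrow \hW$, leveraging the analogous properties already established for $(\omega_V, \omega_W)$ together with the functoriality of the Kirwan blowup construction.

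For condition (1), since $\Phi$ is an equivariant embedding of smooth $G$-schemes, $V^G = V \cap W^G$ scheme-theoretically, so $\hV$ is the strict transform of $V$ inside $\hW$ and $E_{\hV} = \hat{\Phi}^\ast E_{\hW}$. Combining this with the hypothesis $D_V = \Phi^\ast D_W$ and the divisor formula $D_{\hV} = 2E_{\hV} + \pi_V^\ast D_V$ from Lemma~\ref{Obs bundle for blow} immediately yields $D_{\hV} = \hat{\Phi}^\ast D_{\hW}$. For condition (2), I would construct $\eta_{\hat{\Phi}} \colon F_{\hW}|_{\hV} \to F_{\hV}$ by pulling $\eta_\Phi$ back along $\pi_V \colon \hV \to V$; equivariance of $\eta_\Phi$ ensures it respects the fixed/moving decompositions on $V^G \subseteq W^G$, so it descends from $\pi_V^\ast(F_W|_V) \to \pi_V^\ast F_V$ to the subbundles $F_{\hW}|_{\hV}$ and $F_{\hV}$. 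Surjectivity and compatibility with $\phi_{\hW}$ and $\phi_{\hV}$ then follow from diagram chases paralleling those in Lemma~\ref{Obs bundle for blow}. Finally, by functoriality of the blowup section construction (Definition~\ref{blow-up bundle}), $\eta_{\hat{\Phi}}(\omega_{\hW}|_{\hV})$ equals the blowup section of $\eta_\Phi(\omega_W|_V) \in \Gamma(V, F_V)$, so Lemma~\ref{comparison} converts the $\Omega$-equivalence of $\eta_\Phi(\omega_W|_V)$ with $\omega_V$ into the $\Omega$-equivalence of $\eta_{\hat{\Phi}}(\omega_{\hW}|_{\hV})$ with $\omega_{\hV}$.

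For condition (3), Lemma~\ref{6.21} already supplies the morphism $\hat{\Phi}^\ck \colon \Ob_{\hW} \to \Ob_{\hV}$, and surjectivity comes for free from that of $\eta_{\hat{\Phi}}$. I expect the main obstacle to be injectivity of $\hat{\Phi}^\ck$. The plan is to pass to \'etale slices at closed orbits of $\hU$, where by Lemma~\ref{comparison for slice} together with the slice half of Lemma~\ref{6.21} the problem reduces to a diagram of reduced complexes on the slice. There, using Lemma~\ref{conormal transform} to identify $\ker(\eta_{\hat{\Phi}})$ with (a twist of) the conormal bundle of $\hV$ in $\hW$, I would show that this kernel is contained in the image of $(d_{\hW}\omega_{\hW}^\vee)^\vee$, thereby yielding injectivity on cokernels. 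The source of this containment is the hypothesis that $\Phi^\ck$ is already an isomorphism, which expresses exactly the analogous containment of $\ker(\eta_\Phi|_U)$ in the image of $(d_W \omega_W^\vee)^\vee|_U$ before blowing up; propagating this through the Kirwan blowup by the same functoriality used in Lemmas~\ref{comparison} and \ref{conormal transform} should give the desired injectivity.
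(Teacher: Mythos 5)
Your handling of conditions (1) and (2) of Definition~\ref{omega compatibility} is consistent with the paper, which simply cites Lemma~\ref{comparison}; your divisor calculation $D_{\hV} = 2E_{\hV} + \pi_V^\ast D_V = \hat{\Phi}^\ast(2E_{\hW}+\pi_W^\ast D_W) = \hat{\Phi}^\ast D_{\hW}$ is a correct unpacking of what that lemma implicitly grants.

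For condition (3), however, you take a genuinely different route from the paper, and the route you sketch has real gaps. The paper, after invoking Lemma~\ref{6.21} for existence and surjectivity of $\hat{\Phi}^\ck$, proves that it is an isomorphism by a length count: for every Artinian $\Delta \to \hU$ it shows $\ell(\Ob_{\hW}|_\Delta) = \ell((\Omega_{\hU}|_\Delta)^\vee) = \ell(\Ob_{\hV}|_\Delta)$, using only that $\mathrm{rank}\, F_{\hW} = \mathrm{rank}\, T_{\hW}$ (and likewise for $\hV$) together with the identification $\ker (d_{\hW}\omega_{\hW}^\vee)^\vee|_\Delta = (\coker\, d_{\hW}\omega_{\hW}^\vee|_\Delta)^\vee = (\Omega_{\hU}|_\Delta)^\vee$; since $\hat{\Phi}^\ck$ is already known to be surjective, equality of lengths forces it to be an isomorphism. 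This sidesteps any need to understand $\ker(\eta_{\hat{\Phi}})$ explicitly. Your proposal instead aims to prove the containment $\ker(\eta_{\hat{\Phi}})|_{\hU} \subset \im(d_{\hW}\omega_{\hW}^\vee)^\vee|_{\hU}$ directly. That would indeed give injectivity (together with the automatic fact that $\eta_{\hat{\Phi}}$ carries $\im(d_{\hW}\omega_{\hW}^\vee)^\vee$ onto $\im(d_{\hV}\omega_{\hV}^\vee)^\vee$, which your write-up does not mention but which does follow from $\Omega$-equivalence), and the idea of propagating the before-blowup containment $\ker(\eta_\Phi)|_U \subset \im(d_W\omega_W^\vee)^\vee|_U$ through the blowup is plausible. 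But the propagation is precisely where the work is, and it is left as an assertion: you would need to (i) correctly identify $\ker(\eta_{\hat{\Phi}})$ as the blowup bundle $\mathrm{bl}(\ker\eta_\Phi)$ (note this is not literally $N_{\hV/\hW}^\vee$; Lemma~\ref{conormal transform} treats the normal, not conormal, bundle, and $\mathrm{bl}(-)$ does not commute with dualization), (ii) extend an equivariant lift $\gamma \colon \ker\eta_\Phi|_U \to T_W|_U$ to $V$, push it through $\pi^*$ and restrict to $\mathrm{bl}(\ker\eta_\Phi) \to \mathrm{bl}(T_W) \subset T_{\hW}$, and (iii) verify the commutativity $(d_{\hW}\omega_{\hW}^\vee)^\vee \circ \hat{\gamma} = \mathrm{incl}$ modulo $I_{\hU}\cdot F_{\hW}$, which requires an argument that $F_{\hW} \cap (I_{\hU}\cdot\pi^*F_W) = I_{\hU}\cdot F_{\hW}$. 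None of this is automatic, and the detour through \'etale slices you propose in the middle of the argument is not needed for this lemma (that is the content of the separate Lemma~\ref{omega compat for slices}). The paper's length computation is the cleaner and safer path; if you want a structural proof along your lines, the items above must be carried out explicitly.
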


\begin{proof}
By Lemma~\ref{comparison}, it readily follows that conditions (1) and (2) in the definition of $\Omega$-compatibility hold for the respective Kirwan blowups.

It remains to check condition (3). By Lemma~\ref{6.21}, we see that $\eta_{\hat{\Phi}}$ induces a morphism $\hat{\Phi}^\ck \colon \Ob_{\hW} \to \Ob_{\hV}$. Moreover, since $\eta_{\hat{\Phi}}$ is surjective, $\hat{\Phi}^\ck$ is surjective. We need to check that it is an isomorphism.
\begin{align} \label{6.26}
\xymatrix{
T_{\hW}|_{\hU} \ar[r]^-{(d_{\hW} \omega_{\hW}^\vee)^\vee} & F_{\hW}|_{\hU} \ar[d]^-{\eta_{\hat{\Phi}}} \ar[r]^-{\rho_W} & \Ob_{\hW} \ar[d]^-{\hat{\Phi}^\ck} \ar[r] & 0\\
T_{\hV} |_{\hU} \ar[r]_-{(d_{\hV} \omega_{\hV}^\vee)^\vee} & F_{\hV}|_{\hU} \ar[r]^-{\rho_V} & \Ob_{\hV} \ar[r] & 0.
}
\end{align}
Consider a morphism $\Delta \to \hU$, where $\Delta$ is the spectrum of a local Artinian ring. We will show that $\ell(\Ob_{\hW} |_\Delta) = \ell(\Ob_{\hV} |_\Delta)$. Since length is additive in exact sequences, we have 
\begin{align} \label{8.7}
\ell(\Ob_{\hW}|_\Delta) = \ell(F_{\hW}|_\Delta) - \ell (\ker \rho_W|_\Delta)
\end{align}
We have an exact sequence 
$$ T_{\hW} |_\Delta \xrightarrow{(d_{\hW}\omega_{\hW}^\vee)^\vee |_\Delta} F_{\hW} |_\Delta \xrightarrow{\rho_W |_\Delta} Ob_{\hW} |_\Delta \lra 0.$$
Thus $\ker \rho_W|_\Delta = \im (d_{\hW}\omega_{\hW}^\vee)^\vee |_\Delta$ and it follows that 
\begin{align} \label{8.8}
\ell (\ker \rho_W|_\Delta) = \ell( \im (d_{\hW}\omega_{\hW}^\vee)^\vee |_\Delta ) = \ell( T_{\hW} |_\Delta ) - \ell ( \ker (d_{\hW}\omega_{\hW}^\vee)^\vee |_\Delta )
\end{align}
Notice now that 
\begin{align} \label{8.9}
\ker (d_{\hW}\omega_{\hW}^\vee)^\vee |_\Delta = \coker \left( d_{\hW}\omega_{\hW}^\vee |_\Delta \right)^\vee = \Omega_{\hU}|_\Delta^\vee
\end{align} 
Therefore combining \eqref{8.7}, \eqref{8.8} and \eqref{8.9} we get, since $F_{\hW}$ and $T_{\hW}$ have the same rank, $\ell(\Ob_{\hW}|_\Delta) = \ell (\Omega_{\hU}|_\Delta^\vee)$.

An identical argument shows that $\ell(\Ob_{\hV}|_\Delta) = \ell (\Omega_{\hU}|_\Delta^\vee)$. 

Since $\ell(\Ob_{\hW}|_\Delta) = \ell(\Ob_{\hV}|_\Delta)$ for all such $\Delta \to \hU$ and $\hat{\Phi}^\ck$ is surjective, we conclude that $\hat{\Phi}^\ck$ is an isomorphism.\end{proof}

\begin{rmk}
It might seem counterintuitive that there is an induced map $\hat{\Phi}^\ck$, given that in the diagram \eqref{6.26} the derivative arrow $T_{\hV} |_{\hU} \to T_{\hW} |_{\hU}$ goes ``the wrong way". However, the fact that we begin with d-critical charts and then perform Kirwan blowups allows us to show that this is possible. The corresponding situation for taking \'{e}tale slices also owes to the special properties of Kirwan blowups.

Moreover, the isomorphism property of $\hat{\Phi}^\ck$ depends crucially on the fact that the 2-term complexes in question induce perfect obstruction theories on $\hU$. This enables us to show that the kernels of $(d_{\hW}\omega_{\hW}^\vee)^\vee |_\Delta$ and $(d_{\hV}\omega_{\hV}^\vee)^\vee |_\Delta$ are the same and hence obtain an equality of lengths.
\end{rmk}

\begin{lem} \label{omega compat for slices}
Let $\Lambda_V$, $\Lambda_W$ be as above. Let $S$ be an \'{e}tale slice for $\hat{W}$ of a closed point $x \in \hV \xrightarrow{\Phi} \hat{W}$ with stabilizer $H$ and $T = \hat{V} \cap S$, $R = \hat{U} \cap T = \hat{U} \cap S$. Then we have induced data $\Lambda_T$, $\Lambda_S$ of a local model on $T$ and $S$ (cf. Lemma~\ref{Obs bundle for blow}) respectively such that $R$ is in standard form and $\omega_T$ and $\omega_S$ are $\Omega$-compatible.
\end{lem}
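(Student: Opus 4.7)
The plan is to verify the three conditions of Definition~\ref{omega compatibility} for $\omega_S$ and $\omega_T$, building on the $\Omega$-compatibility of $\omega_{\hV}$ and $\omega_{\hW}$ via $\hat{\Phi}$ already established by Lemma~\ref{omega compat for blowups}. Condition (1) is immediate: by Lemma~\ref{Obs bundle for blow} we have $D_S = D_{\hW}|_S$ and $D_T = D_{\hV}|_T$, and the identity $\hat{\Phi}^* D_{\hW} = D_{\hV}$ restricts to $D_S|_T = D_T$.

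For condition (2), the surjection $\eta_{\hat{\Phi}} \colon F_{\hW}|_{\hV} \to F_{\hV}$, together with the defining short exact sequences of $F_S$ and $F_T$ from the proof of Lemma~\ref{Obs bundle for blow}, descends after restriction to $T$ to an equivariant surjection $\eta_\Psi \colon F_S|_T \to F_T$ that is compatible with $\phi_S|_T$ and $\phi_T$ by construction. To show that $\eta_\Psi(\omega_S|_T)$ is $\Omega$-equivalent to $\omega_T$, observe that $\bom_{\hV} := \eta_{\hat{\Phi}}(\omega_{\hW}|_{\hV})$, together with the remaining data of $\Lambda_{\hV}$, gives a second weak local model structure on $\hV$ (it has the same zero locus $\hU$ as $\omega_{\hV}$ by $\Omega$-equivalence, and $\phi_{\hV} \circ \bom_{\hV} = 0$ follows from $\phi_{\hW} \circ \omega_{\hW} = 0$ via compatibility of the $\phi$'s). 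Applying Lemma~\ref{comparison for slice} to $(\omega_{\hV}, \bom_{\hV})$ along the slice $T \subset \hV$ then yields the desired $\Omega$-equivalence.

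Condition (3) is the substantive step. Lemma~\ref{6.21} already supplies the induced morphism $\Psi^\ck \colon \Ob_S \to \Ob_T$, and surjectivity of $\Psi^\ck$ follows from that of $\eta_\Psi$. For injectivity, we repeat the length argument from the proof of Lemma~\ref{omega compat for blowups}: at any local Artinian $\Delta \to R$, the equality $R = (\omega_S = 0) \subset S$ gives, after dualizing the conormal sequence, $\ker((d_S\omega_S^\vee)^\vee|_\Delta) = \Omega_R|_\Delta^\vee$, and since $F_S$ and $T_S$ are locally free of the same rank $\dim S$,
\[
\ell(\Ob_S|_\Delta) = \ell(F_S|_\Delta) - \ell(T_S|_\Delta) + \ell(\Omega_R|_\Delta^\vee) = \ell(\Omega_R|_\Delta^\vee).
\]
The identical computation gives $\ell(\Ob_T|_\Delta) = \ell(\Omega_R|_\Delta^\vee)$, so $\Psi^\ck$ is a surjection between sheaves of equal length at every local Artinian test scheme and hence an isomorphism.

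The main obstacle is the identification $R = (\omega_S = 0) \subset S$ (and analogously on $T$), without which the length count collapses. This rests on the observation that $\omega_{\hW}|_S$ takes values in $\ker \phi_{\hW}|_S$ (because $\phi_{\hW} \circ \omega_{\hW} = 0$), while the kernel $(\fg/\fh)^\vee(-D_S)|_S$ of the quotient $F_{\hW}|_S \to F_S$ intersects $\ker \phi_{\hW}|_S$ trivially, since $\phi_{\hW}$ restricts injectively to the $(\fg/\fh)^\vee$ summand. Hence $\omega_S$ and $\omega_{\hW}|_S$ have the same vanishing locus, namely $\hU \cap S = R$, and the argument on $T$ is verified in exactly the same way.
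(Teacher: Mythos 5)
Your proof is correct and follows the same route as the paper: condition (2) via Lemma~\ref{comparison for slice} applied to $\omega_{\hV}$ and $\bom_{\hV}:=\eta_{\hat\Phi}(\omega_{\hW}|_{\hV})$, which are $\Omega$-equivalent by Lemma~\ref{omega compat for blowups}; the morphism $\Psi^\ck$ from Lemma~\ref{6.21}; and the same length comparison $\ell(\Ob_S|_\Delta)=\ell(\Omega_R|_\Delta^\vee)=\ell(\Ob_T|_\Delta)$ as in Lemma~\ref{omega compat for blowups} for condition (3). The extra check that $R=(\omega_S=0)$ scheme-theoretically, which the paper absorbs into Lemma~\ref{Obs bundle for blow}, is a reasonable addition, though your pointwise phrasing gives only set-theoretic agreement; to get equality of ideals one should note that since $\phi_{\hW}\circ\omega_{\hW}=0$ and $\phi_{\hW}$ is the identity on $(\fg/\fh)^\vee(-D_S)$, the $(\fg/\fh)^\vee(-D_S)$-component of $\omega_{\hW}|_S$ is an $\oO_S$-linear combination of the components of $\omega_S$, so the two ideals coincide.
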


\begin{proof}
By Lemma~\ref{comparison for slice}, $\eta_\Psi (\omega_S |_T)$ and $\omega_T$ are $\Omega$-equivalent, since $\eta_\Phi(\omega_{\hW}|_{\hV})$ and $\omega_{\hV}$ are. Thus, by Lemma~\ref{6.21}, conditions (1) and (2) of the definition of $\Omega$-compatibility hold. Condition (3) follows by an identical argument involving lengths as in the previous lemma.
\end{proof}

The induced map $\Phi^\ck$ is independent of the particular choice of embedding $\Phi$.

\begin{lem} \label{cocyle for blow and slice}
Let $U \sub V \xrightarrow{\Phi, \Psi} W$ be $G$-equivariant embeddings of affine $G$-schemes such that $U$ is in standard form for data $\Lambda_V$ and $\Lambda_W$, $\omega_V$ and $\eta_\Phi(\omega_W|_V)$ are $\Omega$-equivalent, $\omega_V$ and $\eta_\Psi(\omega_W|_V)$ are $\Omega$-equivalent and the diagram 
\begin{align*}
\xymatrix{
U \ar[r] \ar[dr] & V \ar[d]^-{\Phi, \Psi} \\
 & W
}
\end{align*}
commutes. If $\Phi^\ck = \Psi^\ck$, then the induced morphisms on the Kirwan blowups satisfy $\hat{\Phi}^\ck = \hat{\Psi}^\ck$. The same holds if we take slices of these blowups.

Furthermore, if $\Lambda_V$ and $\Lambda_W$ are data of d-critical charts for $U$, then we indeed have $\Phi^\ck = \Psi^\ck$ for any two choices of embedding $V \to W$.
\end{lem}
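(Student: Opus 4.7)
The plan is to reformulate the hypothesis $\Phi^\ck = \Psi^\ck$ as a factorization identity on $V$, then propagate this identity through the Kirwan blowup and slice operations by directly adapting the techniques of Lemmas \ref{comparison}, \ref{comparison for slice}, \ref{6.21}, and finally dispatch the d-critical case by direct identification of cokernels.

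First I would observe that by construction $\Phi^\ck$ (resp.\ $\Psi^\ck$) is the morphism $\Ob_W \to \Ob_V$ induced by passing $\eta_\Phi|_U$ (resp.\ $\eta_\Psi|_U$) to the cokernels of $(d_W\omega_W^\vee)^\vee|_U$ and $(d_V\omega_V^\vee)^\vee|_U$. Thus $\Phi^\ck = \Psi^\ck$ is equivalent to requiring that $(\eta_\Phi - \eta_\Psi)|_U\colon F_W|_U \to F_V|_U$ has image contained in $\im (d_V\omega_V^\vee)^\vee|_U$. Lifting this inclusion to a morphism and symmetrizing via the Reynolds operator, we obtain an equivariant $\alpha\colon F_W|_U \to T_V|_U$ with $(\eta_\Phi - \eta_\Psi)|_U = (d_V\omega_V^\vee)^\vee \circ \alpha$. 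Since the bundles are locally free, $\alpha$ extends to an equivariant morphism $F_W|_V \to T_V$ (still denoted $\alpha$) satisfying $\eta_\Phi - \eta_\Psi = (d_V\omega_V^\vee)^\vee \circ \alpha \pmod{I_U}$ on $V$.

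Next I would lift $\alpha$ to the Kirwan blowup by directly mimicking the template of Lemma \ref{comparison}. Using the relative Euler and tangent sequences \eqref{5.5}, \eqref{5.6} together with the definition of the blowup bundle, the pullback $\pi^*\alpha$ descends to an equivariant $\hat\alpha\colon F_{\hat W}|_{\hat V} \to T_{\hat V}$, and pulling back the $V$-level identity to $\hat V$ followed by the equivariant un-twisting by the equation $\xi$ of the exceptional divisor on the moving components yields
\[
\eta_{\hat\Phi} - \eta_{\hat\Psi} = (d_{\hat V}\omega_{\hat V}^\vee)^\vee \circ \hat\alpha \pmod{I_{\hat U}}.
\]
Restricting to $\hat U$ and projecting to $\Ob_{\hat V}$ produces $\hat\Phi^\ck = \hat\Psi^\ck$. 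For an \'etale slice $S$ of a closed orbit, an entirely parallel argument in the spirit of Lemma \ref{comparison for slice} (using the right inverses $r_S$ and $\delta_S$ of the quotient maps $q_S$ and $\gamma_S$ to descend $\hat\alpha$ to an equivariant $\alpha_S\colon F_S|_T \to T_S|_T$) produces the analogous factorization on $S$, and hence the slice version of the statement. The main technical obstacle is the consistent book-keeping of the ``mod $I_U^2$'' and ``mod $I_U$'' error terms as one alternates between blowups and slices, but this is exactly what Lemmas \ref{comparison}, \ref{comparison for slice}, \ref{6.21}, \ref{omega compat for blowups}, \ref{omega compat for slices} have been set up to handle, so the bookkeeping transfers verbatim.

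Finally, for the d-critical case with $\Lambda_V = (U,V,\Omega_V,df,0,\sigma_V)$ and $\Lambda_W = (U,W,\Omega_W,dg,0,\sigma_W)$, the proof of Lemma \ref{omega compat for d-crit} identifies both $\Ob_W$ and $\Ob_V$ canonically with $\Omega_U$, and both $\eta_\Phi|_U$ and $\eta_\Psi|_U$ are given by pullback of differential forms, each inducing the same canonical quotient $\Omega_W|_U \twoheadrightarrow \Omega_U$ independently of the choice of embedding $V \to W$. Therefore $\Phi^\ck = \Psi^\ck$ holds automatically, completing the proof.
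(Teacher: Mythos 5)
Your proof of the main part (the blowup case) is essentially the same as the paper's: reformulate $\Phi^\ck=\Psi^\ck$ as the factorization $(\eta_\Phi-\eta_\Psi)|_U=(d_V\omega_V^\vee)^\vee\circ\alpha$, lift $\alpha$ along the blowup using the template of Lemmas~\ref{comparison} and \ref{6.21}, and conclude by un-twisting by $\xi$. The only variation is cosmetic: you extend $\alpha$ from $U$ to $V$ and then blow up, whereas the paper keeps $\beta$ on $U$ and lifts it directly to $\hat U$ via the blowup-bundle diagram; both routes depend on the same commutativity observations. Where you genuinely diverge is the d-critical case. The paper argues that $\Phi^*-\Psi^*$ lands in $I_U=\im\omega_V^\vee$ (since $\Phi$ and $\Psi$ agree on $U$), so $\eta_\Phi-\eta_\Psi$ lands in $\im(d_V\omega_V^\vee)|_U$, and then invokes the symmetry of the Hessian $d_V\omega_V^\vee=d_V(df)^\vee$ to identify $\im(d_V\omega_V^\vee)|_U$ with $\im(d_V\omega_V^\vee)^\vee|_U$. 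You instead use that $\Ob_W$ and $\Ob_V$ are both canonically $\Omega_U$, so $\Phi^\ck$ is forced to be the identity by the universal property of the cokernel once one notes that the compositions $\Omega_W|_U\to\Omega_V|_U\to\Omega_U$ via $\eta_\Phi$ and via $\eta_\Psi$ both equal the canonical pullback $\Omega_W|_U\to\Omega_U$ (since $\Phi|_U=\Psi|_U$). Your route avoids the symmetry of the Hessian entirely and makes the independence of the choice of embedding more transparent; the paper's route is more in keeping with the ``image containment'' language used in the rest of the section, so it sets up the blowup argument more directly. Both are correct and give the same conclusion.
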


\begin{proof}
It suffices to show that if $\eta_\Phi - \eta_\Psi$ maps $F_W|_U$ to $\im (d_V \omega_V^\vee)^\vee|_U$ then the same is true for $\hat{\Phi} - \hat{\Psi}$. Let us denote $\alpha = \eta_\Phi - \eta_\Psi$ for brevity. 

Since $\alpha|_U$ maps $F_W|_U$ to $\im (d_V \omega_V^\vee)^\vee|_U$, it follows that $\alpha$ factors as $$F_W |_U \xrightarrow{\beta} T_V |_U \xrightarrow{(d_V \omega_V^\vee)^\vee} F_V|_U.$$ 
Similarly to Lemma~\ref{6.21}, we obtain a commutative diagram
\begin{align*}
\xymatrix{
F_{\hW} |_{\hU} \ar[r]^-{\hat{\alpha}} \ar[dr]_-{\hat{\beta}} & F_{\hV}|_{\hU} \ar[r]^-{\gamma} & \pi^* F_V |_{\hU} \\
& T_{\hV}|_{\hU} \ar[r]_-{d\pi} & \pi^* T_V |_{\hU}. \ar[u]_{\pi^* (d_V \omega_V^\vee)^\vee}
}
\end{align*}

Hence $\gamma \circ \hat{\alpha} = \pi^* (d_V \omega_V^\vee)^\vee \circ d\pi \circ \hat{\beta} = \gamma \circ (d_{\hV} \omega_{\hV}^\vee)^\vee \circ \hat{\beta}$ on $\hU$, from which we deduce that $\gamma \circ \left( \hat{\alpha} - (d_{\hV} \omega_{\hV}^\vee)^\vee \circ \hat{\beta} \right) \in \pi^* I_U \cdot \pi^* F_V$, which in turn yields that $\hat{\alpha} - (d_{\hV} \omega_{\hV}^\vee)^\vee \circ \hat{\beta} \in I_{\hU} \cdot F_{\hV}$ and therefore $\hat{\alpha}|_{\hU} = (d_{\hV} \omega_{\hV}^\vee)^\vee|_{\hU} \circ \hat{\beta}|_{\hU}$, which implies what we want.

The proof for slices proceeds along the same lines of Lemma~\ref{comparison for slice} and Lemma~\ref{6.21} and we omit it. 

Finally if the two sets of data give d-critical charts of $U$, we have $\Phi^* - \Psi^* \colon \oO_W \to \im \omega_V^\vee \sub \oO_V$ for the map on coordinate rings and therefore by symmetry $\eta_\Phi - \eta_\Psi \colon \Omega_W |_U \to \im \left( d_V \omega_V^\vee \right)|_U = \im \left( d_V \omega_V^\vee \right)^\vee|_U \sub \Omega_V|_U$, which implies the desired equality $\Phi^\ck = \Psi^\ck$.
\end{proof} 

The following lemma asserts that taking a slice gives compatible reduced obstruction sheaves and assignments and concludes this section.

\begin{lem} \label{passing to a slice red}
Let $(U, V,F_V,\omega_V,D_V,\phi_V)$ be the data of a local model on $V$. Let $\Phi \colon S \to V$ be an \'{e}tale slice of a closed point of $V$ with closed $G$-orbit and stabilizer $H$ and $(T, S, F_S, \omega_S, D_S, \phi_S)$ be the induced data on $S$.

Consider the diagram
\begin{align} \label{7.2}
\xymatrix{
K_V|_T \ar@{=}[r] & [ \fg \ar[r] & T_V |_T \ar[r]^-{ ( d_V \omega_V^\vee )^\vee } & F_V |_T \ar[r] \ar[d] & \fg^\vee (-D_S) ] \ar[d] \\
K_S \ar@{=}[r] & [ \fh \ar[u]  \ar[r] & T_S |_T \ar[r]_-{ ( d_S \omega_S^\vee )^\vee } \ar[u] & F_S |_T \ar[r] & \fh^\vee (-D_S) ]
}
\end{align}
The surjection $\eta_\Phi \colon F_V |_S \to F_S$ induces an isomorphism $\Phi^b \colon H^1(K_V|_T) \to H^1(K_S)$ which restricts to an isomorphism $\Ob_V^\rred|_{T^s} \to \Ob_S^\rred$, via which the dual complexes $\left( K_V^\rred |_T \right)^\vee, \left( K_S^\rred \right)^\vee$ give the same obstruction assignments on $T$.
\end{lem}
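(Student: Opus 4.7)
The plan is to establish three things in succession: that $\eta_\Phi$ induces a well-defined morphism $\Phi^b$ on first cohomology, that $\Phi^b$ is an isomorphism (restricting to $\Ob_V^\rred|_{T^s} \to \Ob_S^\rred$), and that it intertwines the two obstruction assignments. First I would record the short exact sequences of $H$-equivariant sheaves on $S$
\[
0 \lra \fg/\fh \lra T_V|_S \lra T_S \lra 0,\qquad 0 \lra (\fg/\fh)^\vee(-D_S) \lra F_V|_S \lra F_S \lra 0,
\]
arising respectively from the fibration $G \times_H S \to G/H$ and from the defining construction of $F_S$ in the proof of Lemma~\ref{Obs bundle for blow}. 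Together with $\fh \hookrightarrow \fg$ and the dual projection $\fg^\vee(-D_S) \twoheadrightarrow \fh^\vee(-D_S)$, these four maps bridge the corresponding terms of $K_V|_T$ and $K_S$ in \eqref{7.2} with kernels and cokernels equal to $\fg/\fh$ or its dual twist.

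Second, I would apply the snake lemma to the morphism of short exact sequences whose middle column is $\phi_V|_S$, right column is $\phi_S$ and outer columns are identities on $(\fg/\fh)^\vee(-D_S)$, obtaining an isomorphism $\eta_\Phi \colon F_V^\rred|_S = \ker \phi_V|_S \xrightarrow{\sim} \ker \phi_S = F_S^\rred$. An analogous quotient argument gives $T_V^\rred|_S \cong T_S^\rred$, since $\sigma_V^\vee|_S(\fg/\fh)$ is exactly the normal direction inside $T_V|_S$, so quotienting by $\fg$ on the $V$-side coincides with quotienting by $\fh$ on the $S$-side. Because $\omega_S = \eta_\Phi(\omega_V|_S)$ by construction, differentiating shows that $(d_V\omega_V^\vee)^\vee|_T$ corresponds to $(d_S\omega_S^\vee)^\vee|_T$ under these identifications, so $K_V^\rred|_T \cong K_S^\rred$ as 2-term complexes of $H$-equivariant sheaves. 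Taking $H^1$ of the 4-term complexes term by term, the same snake-lemma argument works on all of $T$ (not only on $T^s$), yielding the desired isomorphism $\Phi^b$, which by \eqref{H^1 to Ob} restricts to the claimed isomorphism on the stable locus.

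For the compatibility of obstruction assignments, I would invoke Lemma~\ref{obs assign for omega compat} (or rather its proof). Here the hypothesis of $\Omega$-compatibility holds in the strongest possible form: $\eta_\Phi \circ \omega_V^\vee|_S = \omega_S^\vee$ on the nose. Concretely, for an infinitesimal lifting problem of $T$ at $p \in T$, a lift $g' \colon \bar{\Delta} \to S \hookrightarrow V$ serves simultaneously for $V$ and $S$; Lemma~\ref{compute obs assign} then computes the two obstructions as $\rho_V(\omega_V \circ g')$ and $\rho_S(\omega_S \circ g')$, which agree under $\Phi^b$ because $\omega_S \circ g' = \eta_\Phi(\omega_V \circ g')$.

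The step I expect to be the most delicate is verifying that the differential of $\omega_V$ descends compatibly to the reduced subquotients on both sides, since the inclusion $T_S|_T \hookrightarrow T_V|_T$ and the surjection $F_V|_T \twoheadrightarrow F_S|_T$ go in opposite directions in \eqref{7.2}. The crucial input is the vanishing $(d_V\omega_V^\vee)^\vee \circ \sigma_V^\vee = 0$ on $U$ from Lemma~\ref{lemma complex}, which guarantees that $(d_V\omega_V^\vee)^\vee$ factors through $T_V^\rred$ and lands in $F_V^\rred$, so that the induced square between the reduced complexes is well defined and commutes with the snake-lemma identifications.
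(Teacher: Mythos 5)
Your proposal is correct and follows essentially the same approach as the paper's proof: the two morphisms of short exact sequences (one for $\fh \hookrightarrow \fg$ and $T_S|_T \hookrightarrow T_V|_T$, one for $(\fg/\fh)^\vee(-D_S) \hookrightarrow F_V|_T \twoheadrightarrow F_S|_T$ versus $\fg^\vee(-D_S) \twoheadrightarrow \fh^\vee(-D_S)$) yield the isomorphisms on the reduced tangent and obstruction bundles, the central square of \eqref{7.2} then factors through a commuting square of the reduced sheaves, and the obstruction-assignment compatibility is handled exactly as in Proposition~\ref{obs assign for omega compat}. Your explicit verification via Lemma~\ref{compute obs assign}, using that $\omega_S = \eta_\Phi(\omega_V|_S)$ on the nose so that a single lift $g'\colon \bar{\Delta} \to S \hookrightarrow V$ computes both obstruction classes, is a useful unpacking of the paper's terse appeal to ``standard arguments.''
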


\begin{proof}
We have a commutative diagram of short exact sequences
\begin{align*}
\xymatrix{
0 \ar[r] & \fh \ar[r] \ar[d] & \fg \ar[r] \ar[d] & \fg / \fh \ar@{=}[d] \ar[r] & 0 \\
0 \ar[r] & T_S|_T \ar[r] & T_V|_T \ar[r]  & \fg / \fh \ar[r] & 0
}
\end{align*}
inducing an isomorphism $T_S|_T^\rred \to T_V|_T^\rred$.

By the definition of $F_S$, we have a diagram of short exact sequences
\begin{align*}
\xymatrix{
0 \ar[r] & (\fg / \fh)^\vee (-D_T) \ar[r] \ar@{=}[d] & F_V |_T \ar[r] \ar[d] & F_S|_T \ar[r] \ar[d] & 0 \\
0 \ar[r] & (\fg / \fh)^\vee (-D_T) \ar[r] & \fg^\vee(-D_T) \ar[r] & \fh^\vee(-D_T) \ar[r] & 0
}
\end{align*}
and similarly we deduce that $F_V|_T^\rred \to F_S|_T^\rred$ is an isomorphism. Then the central square of \eqref{7.2} factors through the commutative diagram
\begin{align*}
\xymatrix{
T_V |_T^\rred \ar[r] & F_V|_T^\rred \ar[d] \\
T_S |_T^\rred \ar[u] \ar[r] & F_S |_T^\rred,
}
\end{align*}
where the vertical arrows are isomorphisms. We obtain an induced isomorphism $H^1(K_V|_T) \to H^1(K_S)$ restricting to an isomorphism $\Ob_V^\rred|_{T^s} \to \Ob_S^\rred$ by \eqref{H^1 to Ob} and the obstruction assignments of the reduced complexes must match by standard arguments as in the proof of Proposition~\ref{obs assign for omega compat}.
\end{proof} 
\begin{rmk} \label{emb to unram}
All of the above results on $\Omega$-compatibility are true if one replaces locally closed embeddings $\Phi \colon V \to W$ by unramified morphisms and Zariski open embeddings by \'{e}tale maps. This is because for our purposes it suffices to work \'{e}tale locally and then Zariski open maps correspond to \'{e}tale maps and locally closed embeddings to unramified morphisms. We will tacitly use this observation in the following sections of the paper.

One may alternatively choose to work in the complex analytic topology where everything works verbatim.
\end{rmk}

\section{Generalized Donaldson-Thomas Invariants} \label{gendt}

This section leads up to the main result of the paper, the construction of the generalized DT invariant via 
Kirwan blowups, by combining the results of the previous sections on the intrinsic stabilizer reduction of a quotient stack, d-critical loci, obstruction theory and the local computations of Section~\ref{LoCalc}.

\subsection{Obstruction theory of intrinsic stabilizer reductions of equivariant d-critical loci}

As in Section~\ref{Kirwan blow-up}, let $\mM = \left[ X / G \right]$ be a quotient stack obtained by GIT. We have therefore a $G$-equivariant embedding $X \sub P$ where $P$ is a smooth scheme with a linearized $G$-action.

Suppose also that $\mM$ admits a d-critical structure $s \in \Gamma(\mM, \sS_\mM)$ so that $(\mM,s)$ is a d-critical stack. By Remark~\ref{4.11} this is equivalent to a $G$-invariant d-critical structure on $X$.

In this section, we carefully follow the steps of Section~\ref{Kirwan blow-up} tailored to the case of a d-critical locus to show that the intrinsic stabilizer reduction $\tilde{\mM}$ admits a semi-perfect obstruction theory.

Let $\fR(X) = \lbrace R_1, ..., R_m, \lbrace 1 \rbrace \rbrace$ in order of decreasing dimension.

Let $x \in X$ be a closed point with closed $G$-orbit, fixed by $R_1$. Since the action of $G$ is good, we have a $G$-invariant affine open $x \in U \sub X$. Therefore, we may apply the \'etale slice theorem \cite[Theorem~5.3]{Drezet} to get a locally closed affine $x \in T \sub U$ such that $$[T/R_1] \to [U/G] \sub [X/G]$$ is \'etale. 

Considering all such points $x \in X$, we obtain an \'etale cover
$$ \coprod [T\lalp^1 / R_1] \coprod [ (X - GZ_{R_1})/G ] \lr [X/G] = \mM,$$
where each $T\lalp^1$ is an $R_1$-invariant d-critical locus. 

In particular, for each index $\alpha$ there exist data of a local model $$(T\lalp^1, S\lalp^1, \Omega_{S\lalp^1}^1, df\lalp,0,\sigma_{S\lalp^1})$$ with $T\lalp^1 = (df\lalp = 0) \sub S\lalp^1$ in standard form and we may take $S\lalp^1$ to be affine, locally closed in $P$.

Let $X_1 = \hat{X}, P_1 = \hat{P}$ be the Kirwan blowups of $X_1, P_1$ respectively associated with $R_1$ and set $X_1^\circ = X - GZ_{R_1}$. Then we obtain an induced \'etale cover
$$ \coprod [\hat{T}\lalp^1 / R_1] \coprod [ X_1^\circ/G ] \lr [X_1/G] = \mM_1 \sub \pP_1 := [P_1/G].$$
For each $\alpha$, we have induced data $$(\hat{T}\lalp^1, \hat{S}\lalp^1, F_{\hat{S}\lalp^1}, \omega_{\hat{S}\lalp^1}, D_{\hat{S}\lalp^1}, \phi_{\hat{S}\lalp^1})$$ with $\hat{T}\lalp^1 = (\omega_{\hat{S}\lalp^1} = 0) \sub \hat{S}\lalp^1 \sub \hat{P} = P_1$. Note also that $[ X_1^\circ /G ] \lr \mM_1$ factors through $\mM$ as an open immersion.

Now $R_2 \in \fR(X_1)$ is of maximal dimension. Let $x \in X_1$ be a closed point with closed $G$-orbit, fixed by $R_2$. Then its orbit will lie in $X_1^\circ$ or it will be contained in the image of some $[\hat{T}\lalp^1 / R_1]$. Applying the same reasoning, we get an induced \'etale cover
$$ \coprod [\hat{T}\lbet^2 / R_2 ] \coprod [(\hat{T}\lalp^1)^\circ / R_1] \coprod [ X_2^\circ /G ] \lr [X_2/G] = \mM_2 \sub \pP_2 := [P_2/G],$$
where $(\hat{T}\lalp^1)^\circ = \hat{T}\lalp^1 - \hat{T}\lalp^1 \cap GZ_{R_2},\ X_2^\circ = X_1^\circ - GZ_{R_2}$ and the various ${T}\lbet^2 \sub S\lbet^2$ are \'etale slices of $G \times_{R_1} \hat{T}\lalp^1$ or $X_1^\circ$ in standard form, where we may take $S\lbet^2$ to be slices in $P_2 = \hat{P_1}$.

Continuing inductively, we have for any $n$ an \'{e}tale cover
$$\coprod [\hat{T}_\gamma^{n} / R_{n}] \coprod \dots \coprod [(\hat{T}\lalp^{1})^\circ / R_{1}] \coprod [ X_{n}^\circ /G] \to \mM_n \sub \pP_n := [P_n /G],$$
where (by abuse of notation) we write $$(\hat{T}\lalp^{i})^\circ = \hat{T}\lalp^{i} - GZ_{R_{i+1}} \cap  \hat{T}\lalp^{i} - ... - GZ_{R_{n}} \cap  \hat{T}\lalp^{i}$$ and so on, and the $T\lalp^{n}$ are appropriate slices of the elements of the \'etale cover for $\mM_{n-1}$. Note also that for each $i$, $[(\hat{T}\lalp^{i})^\circ / R_{i}]$ factors through an \'{e}tale morphism to $\mM_i$.

We see that $\tilde{\mM} = \mM_m = [ X_m / G ] \sub [P_m/G]$, a DM stack, is the intrinsic stabilizer reduction of $\mM$. We formalize the above procedure in the following lemma, where we also keep track of obstruction sheaves and their comparison data.

\begin{lem} \label{long prop}
For each $n \geq 0$, let $\eE_n^X, \eE_n^P$ be the union of all exceptional divisors in $\mM_n$ and $\pP_n$ respectively. There exist collections of \'{e}tale morphisms $[T\lalp / R\lalp] \to \mM_n$ and $[S\lalp / R\lalp] \to \pP_n$ such that:
\begin{enumerate}
\item For each $\alpha$, $R\lalp \in \lbrace R_1, ..., R_n \rbrace$.
\item Each $T\lalp$ is in standard form for data $(T\lalp, S\lalp, F_{S\lalp}, \omega_{S\lalp}, D_{S\lalp}, \phi_{S\lalp})$ of a local model on a smooth affine $R\lalp$-scheme $S\lalp \sub P_n$.
\item The collections cover $\eE_n^X$ and $\eE_n^P$ respectively.
\item The identity components of stabilizers that occur in $\mM_n$ lie, up to conjugacy, in the set $\lbrace R_{n+1}, ..., R_m \rbrace$.
\item The data $(T\lalp, S\lalp, F_{S\lalp}, \omega_{S\lalp}, D_{S\lalp}, \phi_{S\lalp})$ restricted to the complement of $\eE_n^P \sub \pP_n$ are the same as those of a d-critical chart on $T\lalp$.
\item For $\alpha, \beta$ and $q \in [T\lalp / R\lalp] \times_{\mM_n} [T\lbet / R\lbet]$ whose stabilizer has identity component conjugate to $R_q$, there exist an affine $T\lab$ in standard form for data $(T\lab, S\lab, F\lab, \omega\lab, D\lab, \phi\lab)$ of a local model and an equivariant commutative diagram
\begin{align} \label{rhombus}
\xymatrix{
& T\lab \ar[r] \ar[dl]_-{i\lalp} & S\lab \ar[dr]_-{\theta\lbet} \ar[dl]^-{\theta\lalp} & \ar[l] T\lab \ar[dr]^-{i\lbet} &\\ 
T\lalp \ar[r] \ar[dr] & S\lalp \ar[dr] & & S\lbet \ar[dl] & \ar[l] \ar[dl] T\lbet \\
& \mM_n \ar[r] & \pP_n & \ar[l] \mM_n, &
}
\end{align}
inducing a commutative diagram consisting of \'{e}tale maps on the corresponding quotient stacks for arrows pointing downwards. $T\lab, S\lab$ are \'{e}tale slices for both $T\lalp, T\lbet$ and $S\lalp, S\lbet$ respectively. All horizontal arrows are embeddings and $\theta\lalp, \theta\lbet$ are unramified and $R_q$-equivariant.
$\eta_{\theta\lalp} (\omega_{S\lalp})$ and $\eta_{\theta\lbet} (\omega_{S\lbet})$ are $\Omega$-equivalent to $\omega\lab$.

\item For each index $\alpha$, consider the 4-term complex
$$ K\lalp = [ \fr\lalp \lr T_{S\lalp} |_{T\lalp} \xrightarrow{(d\omega_{S\lalp}^\vee)^\vee} F_{S\lalp}|_{T\lalp} \xrightarrow{\phi_{S\lalp}} \fr\lalp^\vee(-D_{S\lalp})],$$
where by convention we place $F_{S\lalp}|_{T\lalp}$ in degree $1$. $\theta\lalp$ induces an isomorphism $\theta\lalp^b \colon \Ob_{S\lalp}^\rred |_{T\lab^s} \to \Ob_{S\lab}^\rred$. 
This does not change if we replace $\omega_{S\lalp}$ by any $\Omega$-equivalent section. The corresponding statements are true for the index $\beta$.

\item We obtain comparison isomorphisms $$\theta\lab^b := (\theta\lbet^b)^{-1} \theta\lalp^b \colon \Ob_{S\lalp}^\rred |_{T\lab^s} \to \Ob_{S\lbet}^\rred |_{T\lab^s}.$$ These give the same obstruction assignments for the complexes $K\lalp^\rred$, $K\lbet^\rred$ on $T\lab^s$.

\item Let now $q$ be a point in the triple intersection $$q \in [T\lalp / R\lalp] \times_{\mM_n} [T\lbet / R\lbet] \times_{\mM_n} [T_\gamma / R_\gamma]$$ with stabilizer in class $R_q$.  Then we have $S_{\lab}$ as in (6) for the indices $\alpha, \beta$, $S_{\beta\gamma}$ as in (6) for the indices $\beta, \gamma$ and $S_{\gamma \alpha}$ for the indices $\gamma, \alpha$ with a common $R_q$-invariant \'{e}tale neighbourhood $S_{\alpha \beta \gamma}$ fitting on top of the diagrams of the form \eqref{rhombus}.
The descents of $\theta\lab^b, \theta_{\beta \gamma}^b, \theta_{\gamma \alpha}^b$ to $[T_{\alpha \beta \gamma}^s/R_q]$ satisfy the cocycle condition.
\end{enumerate}
\end{lem}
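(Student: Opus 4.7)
The plan is to proceed by induction on $n$, following the inductive construction of the Kirwan partial desingularization from Section~\ref{Kirwan blow-up}. For $n=0$, the equivariant d-critical structure on $X = X_0$ provides, around each closed point with closed $G$-orbit and stabilizer with identity component $R_\alpha \in \fR(X)$, an invariant d-critical chart $(T_\alpha, S_\alpha, f_\alpha, i_\alpha)$ via Proposition~\ref{equivariant d-crit prop}, which we may take with $S_\alpha$ an \'etale slice in $P$. This gives the starting data $(T_\alpha, S_\alpha, \Omega_{S_\alpha}, df_\alpha, 0, \sigma_{S_\alpha})$ and verifies (1)--(5) for $n=0$, property (5) being automatic. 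The inductive step is carried out by performing a Kirwan blowup associated with $R_{n+1}$ (of maximal dimension in $\fR(X_n)$): on each chart of $\mM_n$ whose stabilizer encounters $R_{n+1}$ we apply Lemma~\ref{Obs bundle for blow} to obtain induced local model data on the Kirwan blowup, and on each closed point with closed orbit and stabilizer in class $R_{n+1}$ in the blowup we take an \'etale slice, again using Lemma~\ref{Obs bundle for blow} to produce local model data on the slice. Points whose orbit closure meets $\eE_n^X$ are covered by such slices; points away from all exceptional divisors are covered by open sets carried over from the original d-critical charts, establishing (5). Property (4) follows from Kirwan's result that $\fR(\hat P) = \fR(P) - \{R\}$.

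For (6), at a point $q$ lying over \'etale neighborhoods $[T_\alpha/R_\alpha]$ and $[T_\beta/R_\beta]$, I would argue exactly as in the proof of Proposition~\ref{indep}: since $[P_n/G]$ has affine diagonal, the fiber product $S_\alpha \times_{\pP_n} S_\beta$ is affine with an $(R_\alpha \times R_\beta)$-action, and inside it I take a Luna slice $S_{\alpha\beta}$ for the action of $R_q$ around $q$. By comparing dimensions and tangent spaces, the projections $\theta_\alpha, \theta_\beta$ are unramified (\'etale after shrinking in the representable sense), and the $\Omega$-compatibility of $\eta_{\theta_\alpha}(\omega_{S_\alpha})$ and $\eta_{\theta_\beta}(\omega_{S_\beta})$ with a common section $\omega_{\alpha\beta}$ on $S_{\alpha\beta}$ follows by unwinding how the various blowup and slice operations were performed: at the base $n=0$ this is Proposition~\ref{equivalence for df and dg} for two d-critical charts, and the $\Omega$-equivalence propagates through subsequent Kirwan blowups by Lemma~\ref{comparison} and through slices by Lemma~\ref{comparison for slice}.

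For (7), the fact that $\theta_\alpha$ induces an isomorphism $\theta_\alpha^b$ of reduced obstruction sheaves on the stable locus is exactly the content of Lemma~\ref{passing to a slice red} applied to the slice $\theta_\alpha \colon S_{\alpha\beta} \to S_\alpha$, and the invariance under $\Omega$-equivalent replacement of $\omega_{S_\alpha}$ is Lemma~\ref{Obstruction sheaf comparison}. Property (8) then follows by defining $\theta_{\alpha\beta}^b = (\theta_\beta^b)^{-1} \theta_\alpha^b$; the matching of obstruction assignments on $T_{\alpha\beta}^s$ comes from combining Lemma~\ref{passing to a slice red} (which matches obstruction assignments between $K_{S_\alpha}^\rred|_{T_{\alpha\beta}}$ and $K_{S_{\alpha\beta}}^\rred$, and similarly for $\beta$) with Lemma~\ref{Obstruction sheaf comparison} to pass between $\Omega$-equivalent sections.

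The main obstacle is the cocycle condition (9). Given a triple intersection with stabilizer class $R_q$, one takes the $R_q$-fixed subscheme of the triple fiber product $S_\alpha \times_{\pP_n} S_\beta \times_{\pP_n} S_\gamma$ and selects a Luna slice $S_{\alpha\beta\gamma}$ for $R_q$ around $q$ inside it, which maps $R_q$-equivariantly and unramifiedly to each of $S_{\alpha\beta}, S_{\beta\gamma}, S_{\gamma\alpha}$. One then needs that the pullbacks of $\theta_{\alpha\beta}^b, \theta_{\beta\gamma}^b, \theta_{\gamma\alpha}^b$ to $[T_{\alpha\beta\gamma}^s / R_q]$ compose to the identity. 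By construction of $\theta_{\alpha\beta}^b$ via $\theta_\alpha$ and $\theta_\beta$, this reduces to showing that two a priori different slice maps from $S_{\alpha\beta\gamma}$ to $S_\alpha$ (one direct, one factoring through $S_{\alpha\beta}$ or $S_{\gamma\alpha}$) induce the same map on reduced obstruction sheaves. This is precisely what Lemma~\ref{cocyle for blow and slice} is designed to deliver: the two slice maps are unramified and $\Omega$-compatible, and since at the level of d-critical charts two embeddings induce the same cokernel map (the final assertion of Lemma~\ref{cocyle for blow and slice}), the equality propagates through all subsequent Kirwan blowups and slicings, yielding the cocycle condition after descent to $[T_{\alpha\beta\gamma}^s / R_q]$.
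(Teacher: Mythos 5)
Your overall plan—induction on $n$, with new charts produced by slicing at $R_{n+1}$-fixed points, taking Kirwan blowups via Lemma~\ref{Obs bundle for blow}, and propagating comparison data via Lemmas~\ref{comparison}, \ref{comparison for slice}, \ref{passing to a slice red}, \ref{cocyle for blow and slice}—matches the paper's strategy. However, there are two concrete issues.

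First, your base case is misstated. For $n=0$ there are no exceptional divisors in $\mM_0=\mM$, and condition (1) requires $R\lalp\in\{R_1,\dots,R_n\}=\emptyset$; the paper therefore takes the \emph{empty} collection, which trivially satisfies (1)--(9). Your proposal of covering $X$ by invariant d-critical charts for every $R\lalp\in\fR(X)$ violates (1), and is really the preparatory step for passing from $n=0$ to $n=1$, not the base case. Relatedly, your description of the inductive step has the order of operations reversed: one first slices at an $R_{n+1}$-fixed point inside an existing chart (or inside a fresh d-critical chart if the point is away from all exceptional divisors), \emph{then} Kirwan-blows-up the slice; taking ``a slice in the blowup'' is not meaningful since $R_{n+1}$-fixed closed orbits no longer occur after the blowup.

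Second, and more substantively, in (6) and (9) your direct fiber-product-plus-Luna-slice construction of $S\lab$ does not by itself produce the full local model data $(T\lab,S\lab,F\lab,\omega\lab,D\lab,\phi\lab)$, nor does it show $T\lab$ is in \emph{standard form} (obtained from a d-critical chart by blowups and slices), which is exactly what is needed to invoke Lemmas~\ref{comparison}, \ref{comparison for slice}, \ref{Obstruction sheaf comparison}. You never say where $\omega\lab$ comes from. The paper handles this by splitting the new charts at step $n+1$ into \emph{type I} (slices of charts from step $n$, then blown up) and \emph{type II} (fresh d-critical charts at points away from $\eE_n^X$), observing that restricting a type I chart to the complement of $\eE_n^X$ yields a type II chart, so that one may always compare charts of the same type. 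For two type I charts the common roof $\Lambda\lab$ together with its $\Omega$-equivalences is inherited from the inductive hypothesis and carried through the blowup; for two type II charts the comparison reduces to Proposition~\ref{equivalence for df and dg} applied to two d-critical charts. This inductive bookkeeping is what guarantees that $S\lab$ is in standard form and that $\eta_{\theta\lalp}(\omega_{S\lalp})$, $\eta_{\theta\lbet}(\omega_{S\lbet})$, $\omega\lab$ are pairwise $\Omega$-equivalent. You should make this reduction and the inductive provenance of $\Lambda\lab$ explicit; as written, the claim that ``$\Omega$-equivalence follows by unwinding'' leaves the central verification unaddressed.
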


\begin{proof} 
For $n = 0$ there is nothing to show, as we may take an empty set of \'{e}tale morphisms. We proceed by induction. Suppose the claim is true for $n$.

Consider the locus of closed points $x \in \mM_n$ whose stabilizer has identity component conjugate to $R_{n+1}$. Then, either $x$ is in the image of some $[T\lalp / R\lalp]$ or not.

Let's examine the first case. Take $[T\lalp / R\lalp]$ and consider all such closed points $x \in [T\lalp / R\lalp]$. For each $x$, let $[T\lalp^x / R_{n+1}] \to [T\lalp / R\lalp]$ be induced by an \'{e}tale slice $[S\lalp^x / R_{n+1}] \to [S\lalp / R\lalp]$. Then we consider the collection of \'{e}tale maps $[\hat{T\lalp^x} / R_{n+1}] \to \mM_{n+1}$ together with $[T\lalp^\circ / R\lalp] \to \mM_{n+1}$ where $T\lalp^\circ$ is the complement in $T\lalp$ of the locus of closed points with stabilizer whose identity component is conjugate to $R_{n+1}$.
We may repeat this process for all $[T\lalp / R\lalp] \to \mM_n$ and $x$. We say that these \'{e}tale maps are of type I.

So, consider $x \in \mM_n$ such that $x$ does not lie in the image of any $[T\lalp / R\lalp]$. In particular, by (3) $x$ does not lie in $\eE_n^X$ and thus we may assume that it is a closed point of $[X/G]$. Then there exist slices $T \sub S \sub P$ such that $T$ is in standard form for data $(T, S, \Omega_S, df_S, 0, \sigma_S)$ of a local model on $S$ and we have \'{e}tale morphisms $[T/R_{n+1}] \to \mM_n,\ [S/R_{n+1}] \to \pP_n$. Then, we may take \'{e}tale maps $[\hat{T} / R_{n+1}] \to \mM_{n+1}, \ [\hat{S} / R_{n+1}] \to \pP_{n+1}$. We may repeat this process for all such $x$. We say that these maps are of type II.

We have thus produced a collection of \'{e}tale morphisms for $\mM_{n+1}$ and $\pP_{n+1}$. It is clear by our choice and the inductive hypothesis that (1)-(5) are automatically satisfied by Lemma~\ref{Obs bundle for blow} and the properties of Kirwan blowups.

To check (6), the fact that the maps are unramified follows from the slice property since the derivatives are injective around the point of interest. Furthermore, since by (5) the restriction of a map of type I to the complement of $\eE_n^X$ clearly yields rise to a map of type II for $\mM_{n+1}$ it suffices to produce comparison data for maps of the same type. 

Now, for two maps of type I, we may assume that $S\lab$ in \eqref{rhombus} factors through $S\lalp^x$ and $S\lbet^y$ and therefore we may use $\hat{\theta}\lalp$ and $\hat{\theta}\lbet$ to get comparison data, which satisfy the requirements, since $\Omega$-equivalence is preserved by Kirwan blowups and taking slices by Lemma~\ref{comparison} and Lemma~\ref{comparison for slice}. 

For two maps of type II, coming from two choices of $[T/R_{n+1}] \to \mM$ and $[S/R_{n+1}] \to \pP$, we may find (up to shrinking), as in the proof of Proposition~\ref{indep} and using the properties of d-critical loci, a common \'etale refinement $T\lab \to T\lalp,\ T\lab \to T\lbet$ with 
$T\lab = (\omega\lab = df\lab = 0) \sub S\lab$ and commutative comparison diagrams
\begin{align} \label{comp diagram}
\xymatrix{
T\lab \ar[r] \ar[d] & S\lab \ar[d]^-{\theta\lalp} \\
T\lalp \ar[r] & S\lalp \ar[d]^-{f\lalp} \\
 & \bA^1
}
\end{align}
fitting in a diagram of the form \eqref{rhombus} for $\mM_0 := \mM$ and $\pP_0 := \pP$, such that $\theta\lalp, \theta\lbet$ are \'{e}tale and $d f\lalp|_{S\lab}$, $d f\lbet|_{S\lab}$ are $\Omega$-equivalent to $df\lab$. Since $\Omega$-equivalence is preserved for Kirwan blowups and taking slices, we see that (6) is satisfied in this case as well and exceptional divisors pull back to exceptional divisors (cf. Lemma~\ref{Obs bundle for blow}).

Finally (7) and (8) are an immediate consequence of the inductive hypothesis and Lemma~\ref{passing to a slice red}  of the preceding section.
The existence of $S_{\alpha \beta \gamma}$ in (9) can be seen, using the inductive hypothesis for maps of type I, and the d-critical structure for maps of type II. The cocycle condition follows from the commutativity of the diagrams of quotient stacks induced by \eqref{rhombus} and the fact that by construction $\theta\lalp^b$ is induced by pullback, is functorial with respect to compositions and both the reduced obstruction sheaves $\Ob_{S\lalp}^\rred$ and the morphisms $\theta\lalp$ descend to the level of quotient stacks. We note here that all the results of Section 6 hold true if one replaces embeddings by unramified morphisms, so they apply to the present situation as well (cf. Remark~\ref{emb to unram}). \end{proof}

We are now in position to show that $\tilde{\mM} = \mM_m$ is equipped with a semi-perfect obstruction theory of dimension $0$. Let $[T\lalp / R\lalp] \to \tilde{\mM}$ be the cover granted by the previous lemma. Each $[T\lalp / R\lalp]$ is a DM stack and together they cover the strictly semistable locus of $\mM$. Then for any $x \in \mM^s$, as in the proof of the lemma, we have an \'{e}tale map $T_x \to \mM$, where $T_x$ is a d-critical locus with $T_x \sub S_x \to P$. The stable locus $\mM^s$ is not affected during the Kirwan blowup procedure and so $\mM^s$ is an open substack of $\tilde{\mM}$ and we get \'{e}tale maps $T_x \to \tilde{\mM}$. We obtain an \'{e}tale surjective cover 
$$\coprod [ T\lalp / R\lalp ] \coprod T_x \lra \tilde{\mM}.$$
It is easy to check that the obstruction sheaves and assignments of the $T_x$ are compatible with those of the cover $[T\lalp / R\lalp] \to \tilde{\mM}$ by an identical argument as in the above proof. We thus see that the reduced obstruction theories indeed give a semi-perfect obstruction theory of dimension $0$ on $\tilde{\mM}$.

We have proved our main theorem.

\begin{thm} \label{main thm}
Let $\mM = \left[ X / G \right]$, where $X$ is a quasi-projective scheme, which is the semistable part of a projective scheme $X^\dagger$ with a linearized $G$-action.

Suppose also that $\mM$ admits a d-critical structure $s \in \Gamma(\mM, \sS_\mM)$ so that $(\mM,s)$ is a d-critical quotient stack. Then the intrinsic stabilizer reduction $\tilde{\mM}$ is a proper DM stack endowed with a canonical 
semi-perfect obstruction theory of dimension zero induced by $s$, giving rise to a 0-dimensional virtual fundamental cycle 
$[\tilde{\mM}]^{\mathrm{vir}} \in A_*(\tilde{\mM})$.
\end{thm}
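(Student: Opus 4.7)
My plan is to invoke Lemma~\ref{long prop} at the terminal step $n = m$, extend the resulting atlas across the stable locus, and repackage the output as a semi-perfect obstruction theory in the sense of Definition~\ref{semi-perfect obs th} (with the DM-quotient generalization of Remark~\ref{5.10}); the virtual cycle is then produced by Theorem-Definition~\ref{cone}. Applying Lemma~\ref{long prop} with $n = m$, property~(4) ensures that every stabilizer in $\mM_m = \tilde{\mM}$ has trivial identity component, so each chart $[T_\alpha / R_\alpha]$ is DM, and by~(3) these charts cover the total exceptional locus $\eE_m^X$. The open complement $\mM^s \subset \tilde{\mM}$, untouched by the blowup sequence, is already DM, and there I would add direct d-critical charts $T_x$ at stable points to obtain an \'etale surjective cover $\coprod [T_\alpha/R_\alpha] \sqcup \coprod T_x \to \tilde{\mM}$.

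On each chart, the reduced obstruction theory proposition in Section~\ref{red obs th sec} shows that the dual of the reduced complex $K_\alpha^\rred$ descends to a perfect obstruction theory on $[T_\alpha/R_\alpha]$ with obstruction sheaf $\Ob_{S_\alpha}^\rred$; by the rank bookkeeping in the remark following that proposition, the virtual dimension equals $(\dim S_\alpha - \dim R_\alpha) - (\dim S_\alpha - \dim R_\alpha) = 0$. Properties~(7)--(9) of Lemma~\ref{long prop} then deliver exactly the two axioms of Definition~\ref{semi-perfect obs th}: clause~(8) supplies the comparison isomorphisms $\theta_{\alpha\beta}^b$ together with the matching of obstruction assignments, while clause~(9) supplies the cocycle condition on triple overlaps, so the local sheaves $\Ob_{S_\alpha}^\rred$ glue to a global obstruction sheaf $\Ob_{\tilde{\mM}}$. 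Compatibility with the stable-locus charts $T_x$ follows from the same $\Omega$-compatibility machinery of Section~\ref{LoCalc}, since no Kirwan modifications intervene on $\mM^s$.

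Finally, $\tilde{\mM}$ is proper because it arises as the DM quotient of the projective scheme $\tilde{X} \git G$, obtained as a chain of projective modifications of $X \git G$; Theorem-Definition~\ref{cone} therefore produces $[\tilde{\mM}]^{\mathrm{vir}} \in A_0(\tilde{\mM})$. The genuine content of the argument is the inductive construction of Lemma~\ref{long prop}; the present step amounts to observing that its clauses~(8) and~(9) translate verbatim into the axioms of a semi-perfect obstruction theory, together with the rank computation that certifies virtual dimension zero. The main obstacle I would anticipate is ensuring that the d-critical charts at stable points and the inductively produced charts across the exceptional locus carry compatible comparison data; this reduces to Lemma~\ref{omega compat for d-crit} and the $\Omega$-compatibility results of Section~\ref{LoCalc}, which already cover the required transitions between d-critical charts, their Kirwan blowups, and \'etale slices thereof.
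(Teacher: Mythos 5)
Your proposal matches the paper's own argument essentially step for step: apply Lemma~\ref{long prop} at $n=m$, note that property~(4) forces finite stabilizers so each $[T_\alpha/R_\alpha]$ is DM, adjoin d-critical charts $T_x$ over the unmodified stable locus, read off the two axioms of Definition~\ref{semi-perfect obs th} from clauses~(7)--(9), check the rank count for virtual dimension zero via the reduced complexes, and invoke Theorem-Definition~\ref{cone} for the virtual cycle. Your added remark on properness (projectivity of $\widetilde{X}\git G$ via the chain of projective modifications of $X\git G$) is implicit in the paper's Section~2 and correctly fills out the statement.
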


\begin{rmk} \label{presentation independence}
Since at any point of our construction, we are working with stabilizers of closed points and slices thereof, $\tilde{\mM}$ and its obstruction theory are independent of the particular presentation of $\mM$ as a quotient stack $\left[ X / G \right]$.
\end{rmk}

\begin{rmk} \label{scale invariance}
If we replace the d-critical structure $s$ by $s' = c \cdot s$, where $c \in \bC^*$, the virtual cycle stays the same. This is because rescaling the d-critical structure is equivalent to replacing every d-critical chart $(U,V,f,i)$ of $X$ by $(U,V,cf,i)$ and this does not affect the intrinsic normal cone of the obstruction theory of $\tilde{\mM}$.
\end{rmk}

\subsection{Generalized DT invariants via Kirwan blowups} 
Let $\mM$ be the moduli space of Gieseker semistable sheaves with fixed Chern character $\gamma$ on a Calabi-Yau threefold $W$. Let $\kappa \colon \oO_W \to K_W$ be a trivialization of $K_W$. 

The space $\mM$ is the truncation of a derived Artin stack $\pmb{\mM}$. Moreover, by \cite{PTVV} $\kappa$ induces a 
$(-1)$-shifted symplectic structure on $\pmb{\mM}$ and therefore by \cite{JoyceArt} a d-critical structure on $\mM$. Since $\mM = \left[ X / G \right]$ is obtained by GIT (cf. \cite[Section~4]{HuyLehn}), the conditions of Theorem~\ref{main thm} hold and we have the intrinsic stabilizer reduction
$\tilde{\mM}$ and its virtual fundamental cycle $[\tilde{\mM}]^{\mathrm{vir}}$.

Since choosing a different trivialization amounts to rescaling $\kappa$ and subsequently the d-critical structure, by Remarks \ref{presentation independence} and \ref{scale invariance}, the virtual cycle does not depend on the choice of $\kappa$ or presentation as a quotient stack.

\begin{thm-defi} \emph{(Generalized DT invariant via Kirwan blowups)} \label{Gen DT}
We define the generalized DT invariant via Kirwan blowups of Chern character $\gamma$ to be 
$$\mathrm{DT}_\gamma(W) := \mathrm{deg} [\tilde{\mM}]^{\mathrm{vir}}.$$
\end{thm-defi}

\section{Relative Theory and Deformation Invariance} \label{definv}

In this section, we show that Kirwan blowups behave well over a smooth base curve $C$ and use this fact together with the relative versions of the results of the paper, obtained via derived symplectic geometry, to conclude that the generalized Donaldson-Thomas invariant via Kirwan blowups is invariant under deformations of the complex structure of the Calabi-Yau threefold $W$.

\subsection{Kirwan blowups in families}\label{Family blow} Kirwan blowups behave well in families over a smooth curve.

\begin{lem} \label{flatness of ideal of V^G}
Let $G$ be a reductive group; let $V$ be a smooth $G$-scheme, $C$ a smooth curve and $\pi \colon V \to C$ a smooth $G$-equivariant morphism, where $G$
acts on $C$ trivially. Then $V^G$ is smooth over $C$.
\end{lem}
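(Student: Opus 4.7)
The plan is to reduce smoothness of $V^G \to C$ to a tangent space computation at each fixed point, exploiting reductivity of $G$ via the Reynolds operator. First I would recall the absolute statement that $V^G$ is smooth (over $\bC$): at any fixed point $x \in V^G$, Luna's slice theorem (or a direct formal linearization argument using reductivity) gives a $G$-equivariant étale map from a $G$-invariant neighborhood of $0$ in the tangent representation $T_xV$ to a neighborhood of $x$ in $V$, identifying $V^G$ with the linear subspace $(T_xV)^G$ near $x$. In particular $T_x V^G = (T_xV)^G$ and $V^G$ is smooth.

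Next I would show that $V^G \to C$ is smooth by checking surjectivity on tangent spaces at every closed point $x \in V^G$ (this suffices because both source and target are smooth over $\bC$). Set $c = \pi(x)$ and let $V_c$ denote the fiber of $\pi$ over $c$, which is a smooth $G$-scheme (since $G$ acts trivially on $C$, the fiber is $G$-invariant). Smoothness of $\pi$ and the triviality of the $G$-action on $T_cC$ give a short exact sequence of $G$-representations
\begin{equation}
0 \lra T_x V_c \lra T_x V \lra T_c C \lra 0.
\end{equation}
Since $G$ is reductive, the functor of $G$-invariants is exact, so
\begin{equation}
0 \lra (T_x V_c)^G \lra (T_x V)^G \lra T_c C \lra 0
\end{equation}
is exact. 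Under the identifications $(T_xV)^G = T_xV^G$ from the first step, and $(T_xV_c)^G = T_x(V_c)^G = T_x((V^G)_c)$ obtained by applying the same absolute smoothness argument to the smooth $G$-scheme $V_c$, this exact sequence becomes
\begin{equation}
0 \lra T_x (V^G)_c \lra T_x V^G \lra T_c C \lra 0.
\end{equation}
The surjectivity of $T_x V^G \to T_c C$ is exactly what smoothness of $V^G \to C$ at $x$ requires, completing the proof.

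The only nontrivial step is the absolute smoothness of $V^G$ together with the compatibility $(T_xV_c)^G = T_x((V^G)_c)$; both are consequences of the linearization/slice theorem for reductive group actions at a fixed point, applied once to $V$ and once to $V_c$. Everything else is formal: the Reynolds operator makes invariants exact, and smoothness over a smooth curve is detected by surjectivity of the differential on tangent spaces.
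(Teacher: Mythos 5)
Your proof is correct and follows essentially the same route as the paper: both reduce to surjectivity of $d\pi(x)\colon T_xV^G\to T_{\pi(x)}C$ at a fixed point, identify $T_xV^G=(T_xV)^G$ using smoothness of $V^G$, and then invoke reductivity (via the Reynolds operator / exactness of invariants) to lift a preimage of a tangent vector of $C$ to a $G$-invariant one. The paper averages a single preimage directly, while you phrase it as exactness of invariants applied to the short exact tangent sequence; these are the same argument, and the extra identification $(T_xV_c)^G=T_x((V^G)_c)$ you include is not needed for the statement as given.
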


\begin{proof} This is a standard fact as we are working over $\CC$.
First, as $V$ is smooth, $V^G$ is smooth. To prove that $V^G$ is smooth over $C$, we need to show that for
any closed $x\in V^G$, the projection $d\pi(x): T_x V^G\to T_{\pi(x)}C$ is surjective. 

Indeed, as both $V$ and $V^G$ are smooth, $T_x V^G=(T_x V)^G$. Let $v\in T_x V$ so that $d\pi(x)(v)\ne 0$. Applying the Reynolds operator
$\mathcal R$, we get that the $G$-invariant part of $v$, namely $\mathcal R(v)\in(T_x V)^G$, has $d\pi(x)(\mathcal R(v))
=d\pi(x)(v)$, thus $d\pi(x): T_x V^G\to T_{\pi(x)}C$ is surjective.
\end{proof}

The same proof gives the following result on \'etale slices.

\begin{lem} 
Let $\pi \colon V \to C$ be as in Lemma \ref{flatness of ideal of V^G}. If $x$ is a closed point in $V$ with reductive stabilizer $H$ and $S$ is an \'{e}tale slice for $x$, then $\pi \colon S \to C$ is smooth.
\end{lem}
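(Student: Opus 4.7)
The plan is to mimic the strategy of Lemma~\ref{flatness of ideal of V^G} but replace the invocation of the Reynolds operator by the Luna decomposition of the tangent space at $x$. First, I observe that $S$ is smooth over $\bC$: by Luna's \'etale slice theorem, $S\subset V$ is a smooth locally closed $H$-invariant subscheme of the smooth scheme $V$. Since both $S$ and $C$ are smooth over $\bC$, smoothness of $\pi|_S\colon S\to C$ is equivalent to surjectivity of the differential
\begin{equation*}
d\pi(s)\colon T_sS\lra T_{\pi(s)}C
\end{equation*}
at every closed point $s\in S$.

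To produce preimages, I would invoke the defining property of the \'etale slice. After shrinking $S$ around $x$ if necessary, the induced morphism $q\colon G\times_H S\to V$ is \'etale, so the composition $\pi\circ q\colon G\times_H S\to C$ is smooth (being an \'etale morphism followed by a smooth one). The \'etale isomorphism on tangent spaces $T_{[e,s]}(G\times_H S)\xrightarrow{\sim} T_sV$ identifies $T_sV$ with the image of $\fg\oplus T_sS$ under $(X,v)\mapsto X\cdot s+v$; in particular it yields the (not necessarily direct) decomposition $T_sV=T_s(G\cdot s)+T_sS$. Because $G$ acts trivially on $C$, all tangent vectors along $G$-orbits are $\pi$-vertical, so $d\pi(s)$ annihilates $T_s(G\cdot s)$. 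Smoothness of $\pi\colon V\to C$ then forces $d\pi(s)|_{T_sS}$ to be surjective onto $T_{\pi(s)}C$.

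This produces surjectivity of the differential at every $s\in S$, which suffices for smoothness. The only subtlety is to verify that the argument applies at all $s\in S$, not just at the distinguished point $x$; this is handled by the fact that the \'etale slice property of $q$ is preserved in an $H$-invariant neighborhood of $x$, which we may assume coincides with $S$ after shrinking. No serious obstacle arises: the argument is a direct analog of Lemma~\ref{flatness of ideal of V^G}, with the Luna slice decomposition playing the role of the Reynolds projection onto $G$-invariants.
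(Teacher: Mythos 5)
Your proof is correct. The paper itself merely asserts ``the same proof'' as Lemma~\ref{flatness of ideal of V^G}, i.e.\ the Reynolds-operator argument; you sensibly observe that the Reynolds operator for $H$ is not literally the right projection here (at a general $s\in S$, the $H$-invariant subspace $(T_sV)^H$ need not lie in $T_sS$), and you substitute the structurally correct tool: the Luna decomposition $T_sV = T_s(G\cdot s) + T_sS$ coming from the \'etale morphism $G\times_H S\to V$. Combined with the observation that orbit directions are $\pi$-vertical (since $G$ acts trivially on $C$), this gives surjectivity of $d\pi(s)\vert_{T_sS}$ directly. The two arguments share the same core idea --- lift from $T_sV$ and discard the vertical orbit component --- but your version makes the key geometric input (the slice decomposition rather than Reynolds averaging) explicit, and it also handles surjectivity at every closed point of $S$ rather than only at the distinguished point $x$. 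This is slightly more than necessary, since smoothness at $x$ alone would let one shrink $S$ to an open neighborhood smooth over $C$, but it is harmless and in fact makes the argument self-contained.
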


\begin{cor} \label{intrinsic family V smooth}
Let $\pi \colon V \to C$ be as in Lemma \ref{flatness of ideal of V^G}. Then for any point $c \in C$ we have a canonical isomorphism $( {\hV} )_c \cong \hat{V_c}$.
\end{cor}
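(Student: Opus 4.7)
The plan is to verify that both steps defining the Kirwan blowup---blowing up the $G$-fixed locus inside $V$, and then restricting to the semistable open subscheme---commute with restriction to the fiber $V_c$.

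First I would check that $(V^G)_c = (V_c)^G$ canonically. Since $G$ acts trivially on $C$, the $G$-action preserves fibers of $\pi$, so $(V_c)^G \subseteq (V^G)_c$. Conversely, a point of $(V^G)_c$ is fixed in $V$, hence fixed in $V_c$. Next, by Lemma~\ref{flatness of ideal of V^G}, $V^G$ is smooth over $C$, and $V$ is smooth over $C$ by hypothesis, so $V^G \hookrightarrow V$ is a regular closed embedding whose conormal sheaf is $C$-flat (indeed locally free on $V^G$). Restriction to the fiber over $c$ is flat base change, and under this flatness hypothesis the formation of the blowup commutes with base change, giving a canonical identification
\[
\bigl(\mathrm{bl}_{V^G}(V)\bigr)_c \;\cong\; \mathrm{bl}_{(V^G)_c}(V_c) \;=\; \mathrm{bl}_{(V_c)^G}(V_c).
\]

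It remains to see that the semistable-locus operation commutes with this restriction. By Remark~\ref{local GIT}, a point $y \in \mathrm{bl}_{V^G}(V)$ is unstable exactly when the closure of its $G$-orbit meets the unstable locus of the exceptional divisor $E = \bP N_{V^G/V}$, and the unstable locus of $E$ over a fixed point $x \in V^G$ is determined purely by the linear $G$-action on $\bP N_{V^G/V}\big|_x$. Since $V^G \hookrightarrow V$ is a regular embedding compatible with base change, the normal bundle restricts correctly: $N_{V^G/V}\big|_{(V^G)_c} = N_{(V_c)^G/V_c}$ as $G$-equivariant bundles. Thus the GIT data on each fiber of $E \to V^G$ over a point of $(V^G)_c$ match those on the corresponding fiber of the exceptional divisor of $\mathrm{bl}_{(V_c)^G}(V_c)$, so the unstable loci on the exceptional divisors agree fiberwise. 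Finally, because $G$ acts trivially on $C$, the $G$-orbit (and its closure) of any point $y \in V_c$ lies inside $V_c$, so the condition ``the orbit closure of $y$ meets the unstable locus on $E$'' is equivalent whether tested inside $\mathrm{bl}_{V^G}(V)$ or inside $\mathrm{bl}_{(V_c)^G}(V_c)$.

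Combining these two parts yields the canonical isomorphism $(\hV)_c \cong \hat{V_c}$. The only real point to watch out for is the last one: one must be sure that orbit-closure semistability is detected fiberwise, which uses in an essential way that $G$ acts trivially on $C$ so that orbits respect the projection $\pi$. Everything else is either a straightforward fixed-locus computation, flat-base-change for blowups along regular embeddings, or a direct inspection of Kirwan's local GIT criterion.
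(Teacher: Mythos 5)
Your proof is a detailed version of the paper's one-line argument, which simply asserts that the claim ``follows immediately from the fact that $V^G$ is smooth over $C$.'' The three-part structure you use (identify fixed loci on fibers; commute blowups; commute semistable loci) is the right way to unpack that sentence, and parts one and three are fine.

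The phrase ``restriction to the fiber over $c$ is flat base change'' is incorrect: $\{c\} \hookrightarrow C$ is a closed point, so $V_c \to V$ is \emph{not} flat, and the standard fact that blowups commute with flat base change does not apply directly. What you actually need --- and what you state correctly one sentence earlier --- is flatness (in fact smoothness) of the \emph{center} $V^G$ over $C$. Since $\oO_V / I_{V^G}^n$ is an iterated extension of the $C$-flat sheaves $\mathrm{Sym}^k N^\vee_{V^G/V}$ and $\oO_{V^G}$, each power $I_{V^G}^n$ is $C$-flat and restricts on the fiber to $I_{(V_c)^G}^n$; hence the Rees algebra, and therefore the blowup, commutes with restriction to $V_c$. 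Equivalently, $V^G \hookrightarrow V$ is \'{e}tale-locally a product $(\bA^r \hookrightarrow \bA^n) \times C$ over $C$, in which the commutation of blowup with taking fibers is visible by inspection. With that correction your argument is complete and matches the paper's intent.
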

\begin{proof}
This follows immediately from the fact that $V^G$ is smooth over $C$.
\end{proof}

We obtain the following result on intrinsic stabilizer reductions.

\begin{prop} \label{intrinsic family U general}
Let $X=(X^\dagger)^{ss}$ with $X\sub P\sub \bP^N\times C$ be closed $G$-subschemes as before 
(cf. the beginning of Section~ \ref{Kirwan blow-up}) except where $C$ is a smooth curve and $G$ acts on $\bP^N$ via
a homomorphism $G\to GL(N+1)$. Let $\ti X$ be the intrinsic stabilizer reduction of $X$. Then for any closed point $c\in C$, $(\ti X)_c=\tilde{X_c}$.
\end{prop}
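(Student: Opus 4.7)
The plan is to argue by induction on the Kirwan blowup steps that build $\widetilde{X}$ out of $X$, reducing at each stage to a purely local statement about the intrinsic blowup of a $G$-invariant closed subscheme inside a smooth slice. Recall that $\widetilde{X}=X_m$ is obtained by a finite sequence $X=X_0\leadsto X_1\leadsto\cdots\leadsto X_m$, where $X_{i+1}=\hat X_i$ is the Kirwan blowup associated to a maximal-dimensional $R_{i+1}\in\mathfrak R(X_i)$. Since fiber-taking is functorial and $\mathfrak R(X_c)\subseteq\mathfrak R(X)$, it suffices to prove the one-step assertion: for a single choice of $R$ of maximal dimension, $(\hat X)_c=\widehat{X_c}$ (with respect to the same $R$). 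Iterating this identity along the blowup tower then yields $(\widetilde X)_c=\widetilde{X_c}$.

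First I would reduce the one-step statement to a local computation using Luna's slice theorem. For each closed orbit $G\cdot x\subset X$ fixed by $R$, the slice theorem produces $R$-invariant affine slices $T\subset S$, with $T=S\times_P X$ and $S\subset P$ an $R$-invariant locally closed slice. By Proposition~\ref{indep} the Kirwan blowup $\hat X$ is canonically glued from the pieces $G\times_R \hat T\to \hat P$ together with $X\setminus GZ_R$. Since slice formation, fiber product with $X$, and the open complement $X\setminus GZ_R$ all commute with restriction to the fiber over $c\in C$, the assertion $(\hat X)_c=\widehat{X_c}$ reduces to the local claim: for $R$-invariant $T\subset S$ with $S$ a smooth affine $R$-scheme smooth over $C$ (on which $R$ acts trivially), one has $(\hat T)_c=\widehat{T_c}$ inside $(\hat S)_c=\widehat{S_c}$, the latter equality being exactly Corollary~\ref{intrinsic family V smooth}.

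Next I would verify this local claim by unwinding the definition of the intrinsic blowup. Write $I\subset\mathcal O_S$ for the ideal of $T$ and $I_c\subset\mathcal O_{S_c}$ for the ideal of $T_c$; flat base change along $S_c\hookrightarrow S$ gives $I_c=I|_{S_c}$. Because $R$ acts trivially along $C$, the Reynolds decomposition $I=I^{\text{fix}}\oplus I^{\text{mv}}$ is $\mathcal O_C$-linear, so it restricts fiberwise to the Reynolds decomposition $I_c=I_c^{\text{fix}}\oplus I_c^{\text{mv}}$. On the ambient blowup $\mathrm{bl}_R(S)\to S$, the exceptional divisor $E$ is smooth over $C$ with $E_c$ the exceptional divisor of $\mathrm{bl}_R(S_c)\to S_c$, and the tautological section $\xi$ defining $E$ restricts to the analogous section on $\mathrm{bl}_R(S_c)$. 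Consequently $\xi^{-1}\pi^{-1}(I^{\text{mv}})$ restricts to $\xi^{-1}\pi^{-1}(I_c^{\text{mv}})$, and so the defining ideal $I^{\text{intr}}$ of $T^{\text{intr}}$ given by \eqref{tilde I} restricts to $(I_c)^{\text{intr}}$. Hence $(T^{\text{intr}})_c=(T_c)^{\text{intr}}$; finally passing to semistable loci commutes with fiber restriction by Remark~\ref{local GIT}, since (un)stability of a point is detected on its $R$-orbit inside the exceptional projective space, a condition which is purely fiberwise. This gives $(\hat T)_c=\widehat{T_c}$, completing the local step and hence the proposition.

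The main obstacle is the compatibility of the intrinsic ideal $I^{\text{intr}}$ with fiber restriction; everything else is a straightforward combination of the canonicity of Kirwan blowups (Proposition~\ref{indep}), the smoothness of $V^G$ over $C$ (Lemma~\ref{flatness of ideal of V^G}), and the \'etale-slice description of the blowup. The subtlety is that $I^{\text{intr}}$ is not defined by a flat module-theoretic operation: it involves dividing the moving part $I^{\text{mv}}$ by the equation $\xi$ of the exceptional divisor. What makes the restriction argument go through is precisely that $R$ acts trivially on $C$, so the Reynolds decomposition is $\mathcal O_C$-linear, and that $E\to C$ is smooth with fiberwise-defining section $\xi$, so the ``untwisting by $\xi^{-1}$'' is compatible with pullback to $S_c$.
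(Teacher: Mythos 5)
Your proof is correct and follows essentially the same route as the paper: reduce to the single-step statement $(\hat X)_c = \widehat{X_c}$, then use that the Reynolds decomposition $I=I^{\mathrm{fix}}\oplus I^{\mathrm{mv}}$ is $\oO_C$-linear (since $G$ acts trivially on $C$) together with Corollary~\ref{intrinsic family V smooth} to see that the intrinsic ideal commutes with restriction to the fiber. The only cosmetic difference is that you route the local verification through Luna slices $T\subset S$, whereas the paper compares $I^{intr}\otimes_{\oO_C}\bC_c$ with $(I\otimes_{\oO_C}\bC_c)^{intr}$ directly in a $C$-embedding $X\subset V$ via a diagram of exact sequences.
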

\begin{proof}
Let $\hat X$ be the Kirwan blowup of $X$ with respect to $G$. By the construction of the intrinsic stabilizer reduction, the lemma follows
from showing that $(\hat X)_c=\hat{X_c}$.
 
We are considering the case where $X$ comes with an equivariant $C$-embedding $X \sub V$ where $V \to C$ is smooth. Let $I \sub \oO_V$ be the ideal sheaf of $X \sub V$. Then, applying the Kirwan blowup $\pi \colon \hat{X} \to X$ we have a short exact sequence
$$0 \lra I^{intr} \lra \oO_{\hV} \lra \oO_{\hat X} \lra 0.$$
Let $\bC_c$ be the residue field at $c \in C$. We then have
$$ I^{intr} \otimes_{\oO_C} \bC_c \lra \oO_{(\hat V)_c} \lra \oO_{(\hat X)_c} \lra 0.$$ 
This fits into a diagram of exact sequences
\begin{align*}
\xymatrix{
I^{intr} \otimes_{\oO_C} \bC_c \ar[r] \ar[d] & \oO_{(\hat V)_c} \ar[r] \ar[d] & \oO_{(\hat X)_c} \ar[r] \ar[d] & 0 \\
(I \otimes_{\oO_C} \bC_c)^{intr} \ar[r] & \oO_{\hat{V_c}} \ar[r] &  \oO_{\hat{X_c}} \ar[r] & 0.
}
\end{align*}
By the preceding corollary, the middle arrow is an isomorphism. Moreover, if $I = I^{fix} \oplus I^{mv}$ is the decomposition of $I$ into its fixed and moving parts, since $G$ is reductive, $$I \otimes_{\oO_C} \bC_c = \left( I^{fix} \otimes_{\oO_C} \bC_c \right) \oplus \left( I^{mv} \otimes_{\oO_C} \bC_c \right)$$ is the decomposition into fixed and moving parts since the action of $G$ on $C$ is trivial. We conclude that the leftmost horizontal arrows have the same image under the identification $\oO_{(\hat V)_c} \simeq \oO_{\hat{V_c}}$ and thus we have a natural isomorphism $(\hat X)_c \simeq \hat{X_c}$.
\end{proof}

\subsection{Derived symplectic geometry: local picture} We assume for simplicity that $C = \Spec S$ is a smooth affine curve over $\bC$.

In what follows, we consider commutative differential graded algebras $(A^\bullet, \delta)$ (cdga's) over $S$, which are negatively graded. This will be the case for all differentials and cotangent complexes as well. Also, whenever we refer to a reductive group $G$, we assume that it acts trivially on $C$.

Results and properties of derived affine schemes that are quoted in this subsection can be found in the exposition of \cite[Sections~2,3]{JoyceSch}.
\begin{defi} \emph{(Standard form cdga)}
We say that a cdga $(A^\bullet, \delta)$ is of standard form if $A^0$ is a smooth $S$-algebra and, as a graded algebra, it is freely generated over $A^0$ by finitely many generators in each negative degree (i.e. it is quasifree).
\end{defi}
Now, a cdga $(A^\bullet, \delta)$ gives rise to the affine derived scheme $\dspec A^\bullet$ with underlying classical truncation the affine scheme $\Spec A^\bullet = \Spec H^0 (A^\bullet)$. $\dspec A^\bullet$ and $\Spec H^0(\Abul)$ have the same underlying topological space. 

If $A^\bullet$ is of standard form, then its derived cotangent complex $\bL_{A^\bullet}$ is represented by the K\"{a}hler differentials together with the internal differential $(\Omega_{A^\bullet}, \delta)$. We also have the usual de Rham differential $d$ on $\Omega_{A^\bullet}$, so that we obtain a mixed complex. Moreover, $\bL_{A^\bullet} \otimes_{A^\bullet} H^0(A^\bullet)$ is a complex of free $H^0(A^\bullet)$-modules.
\begin{defi} \emph{(Minimality)}
Let $(A^\bullet, \delta)$ be of standard form and $x \in \dspec A^\bullet$. We say that $A^\bullet$ is minimal at $x$ if all the differentials in $\bL_{A^\bullet}|_x$ are zero. 
\end{defi}
If $G$ is a reductive group acting on $(A^\bullet, \delta)$ (and by our convention trivially on $C$), then we have analogous equivariant statements. We will consider minimality at $x$ only when $x$ is a fixed point of $G$.

We proceed to give a brief account of $(-1)$-shifted symplectic forms on $\dspec A^\bullet$, introduced in \cite{PTVV}.

\begin{defi} \emph{($(-1)$-shifted symplectic form)} We say that $\pmb{\omega} = (\omega_0, \omega_1, ...)$ is a $(-1)$-shifted symplectic form on $\dspec A^\bullet$ if $\omega_i \in \left( \wedge^{2+i} \Omega_{A^\bullet} \right)^{-1-i}$ are such that
\begin{enumerate}
\item $\omega_0$ gives a quasi-isomorphism $\bL_{A^\bullet} \to \bT_{A^\bullet}[1]$,
\item $\delta \omega_0 = 0$ and $d \omega_i + \delta \omega_{i+1} = 0$ for $i \geq 0$.
\end{enumerate}
We refer to condition (1) as the non-degeneracy property and condition (2) as the closedness property.

Two forms $\pmb{\omega}, \pmb{\omega'}$ are equivalent if there exist $\alpha_i \in \left( \wedge^{2+i} \Omega_{A^\bullet} \right)^{-2-i}$ such that $\omega_0 - \omega_0' = \delta \alpha_0$ and for all $i \geq 0$, $\omega_{i+1} - \omega_{i+1}' = d \alpha_i + \delta \alpha_{i+1}$.
\end{defi}

A $(-1)$-shifted symplectic form can be defined on a derived stack by smooth descent. Suppose now that the derived quotient stack $\left[ \dspec \Abul/G \right]$ has a $(-1)$-shifted symplectic form $\pmb{\omega}$, where $\Abul$ is in standard form and minimal at a fixed point $x$. Then $\omega_0$ induces a quasi-isomorphism $$\bL_{[\dspec \Abul/G]}|_{H^0(\Abul)} \to \bT_{[\dspec \Abul/G]}|_{H^0(\Abul)}[1].$$

This implies that $\bL_{\Abul}$ must have Tor-amplitude $[-2,0]$ and therefore $\Abul$ is freely generated over $A^0$ in degrees $-1$ and $-2$ by generators $y_j \in A^{-1}$ and $w_k \in A^{-2}$ respectively. We may write the above quasi-isomorphism as the following equivariant morphism of complexes
\begin{align} \label{8.1}
\xymatrix{
V^{-2} \ar[r] \ar[d] & V^{-1} \ar[r] \ar[d] & \Omega_{A^0} \ar[d] \ar[r] & \fg^\vee \ar[d]\\
\fg \ar[r] & T_{A^0} \ar[r] & (V^{-1})^\vee \ar[r] & (V^{-2})^\vee.
}
\end{align}
Since $\Abul$ is minimal at the fixed point $x$, we may localize $G$-invariantly around $x$ and assume that the vertical arrows are isomorphisms.
In particular, we obtain an isomorphism $V^{-1} \simeq T_{A^0}$. Let $x_i$ be a set of \'{e}tale coordinates for $A^0$. We may now choose generators $y_i \in A^{-1}$ such that $dy_i \in V^{-1}$ are the dual basis to $d x_i \in \Omega_{A^0}$ under this isomorphism. We may thus identify $A^{-1}$ and $T_{A^0}$ as $A^0$-modules. Therefore the differential $\delta \colon T_{A^0} \to A^0$ induces an invariant section $\omega$ of $\Omega_{A^0}$ whose zero locus is precisely $\Spec H^0(\Abul)$. 

\begin{defi} \label{cdga}
We say that a cdga $(\Abul, \delta)$ with a $G$-action, minimal at a fixed point $x$ of $G$, is special if it is freely generated over $A^0$ in degrees $-1$ and $-2$ by generators $y_i$ and $w_j$ respectively, together with an identification $A^{-1} = V^{-1}$ (where $V^{-1}$ is as in \eqref{8.1}) mapping $y_i$ to $dy_i$. 

We denote $U = \Spec H^0(\Abul)$ and $V = \Spec A^0$.
\end{defi}

Every finitely presented affine derived scheme is (up to Zariski shrinking) equivalent to $\dspec \Abul$ with $\Abul$ a standard form cdga and this is also true in the equivariant setting around a fixed point $x$ of $G$. For more details, we refer to \cite[Theorem~4.1]{JoyceSch} and its proof, which is valid in the equivariant setting as well. We deduce the following proposition.

\begin{prop} \label{prop 8.8}
Let $U$ be an affine $G$-scheme over $C$, which is the classical scheme associated to a derived affine $G$-scheme $\textbf{U}$ such that the stack $[ \textbf{U} / G ]$ is $(-1)$-shifted symplectic with form $\pmb{\omega}$. Moreover, let $x \in U$ be a fixed point of $G$. Then, up to equivariant Zariski shrinking, $\textbf{U}$ is equivalent to $\dspec A^\bullet$, where $(\Abul, \delta)$ is a special cdga, minimal at $x$.  

In particular, there exists a smooth affine $G$-scheme $V \to C$, a $G$-equivariant embedding $U \to V$ over $C$ minimal at $x$ and an induced invariant 1-form $\omega \in H^0(\Omega_{V/C})$ such that $U = Z ( \omega ) \sub V$ is the zero locus of $\omega$.
\end{prop}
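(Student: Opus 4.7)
The plan is to realise the statement as a refinement of \cite[Theorem~4.1]{JoyceSch} equipped with the extra data of a $(-1)$-shifted symplectic form, adapted equivariantly and relatively over $C$. First I would invoke the equivariant relative analogue of Joyce's presentation theorem: after $G$-invariant Zariski shrinking around $x$, the derived affine $G$-scheme $\textbf{U}$ over $C$ is equivalent to $\dspec A^\bullet$ for a standard-form cdga $(A^\bullet,\delta)$ over $S$ with a compatible $G$-action. A further equivariant shrinking makes $A^\bullet$ minimal at the fixed point $x$; the usual trick of eliminating non-minimal generators by a change of variables goes through $G$-equivariantly by averaging with the Reynolds operator, and the projection to $C$ is preserved because $G$ acts trivially on $C$.

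Next I would bring in the $(-1)$-shifted symplectic form $\pmb{\omega}$. Its non-degeneracy component $\omega_0$ supplies a quasi-isomorphism $\bL_{[\dspec A^\bullet/G]} \simeq \bT_{[\dspec A^\bullet/G]}[1]$. Minimality at $x$ promotes this, after possibly a further equivariant shrinking, to a strict isomorphism of the 4-term complexes shown in \eqref{8.1} with all four vertical arrows isomorphisms. In particular $\bL_{A^\bullet}$ has Tor-amplitude $[-2,0]$, and so $A^\bullet$ is freely generated over $A^0$ in degrees $-1$ and $-2$ only. Setting $V = \Spec A^0$, Lemma~\ref{flatness of ideal of V^G} (and its proof) gives that $V\to C$ is $G$-equivariantly smooth, so we may choose $G$-equivariant \'etale relative coordinates $x_i$ on $V/C$. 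The isomorphism $V^{-1}\simeq T_{V/C}$ coming from \eqref{8.1} then lets me pick generators $y_i \in A^{-1}$ with $dy_i$ dual to $dx_i$, which is exactly the \emph{special} condition of Definition~\ref{cdga}.

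It remains to produce the invariant 1-form. Under the identification $A^{-1}\cong T_{V/C}$ the internal differential $\delta\colon A^{-1}\to A^0$ corresponds to an element $\omega \in H^0(V,\Omega_{V/C})$; it is $G$-invariant because $\delta$ is so, and the classical truncation computation $H^0(A^\bullet)= A^0/\delta(A^{-1})$ identifies the zero locus $Z(\omega)\subset V$ with $U$. Compatibility with $C$ is automatic since all our shrinkings, the symplectic identification, and the differential $\delta$ are $S$-linear. The step I expect to be the main obstacle is the equivariant choice of generators: one must upgrade the pointwise isomorphism $V^{-1}|_x\simeq T_{V/C}|_x$ provided by $\omega_0$ at the fixed point to a $G$-equivariant isomorphism of locally free sheaves on a neighbourhood, and then select equivariant generators $y_i$ realising this identification. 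This relies essentially on minimality at $x$ (which turns the quasi-isomorphism into an honest isomorphism at the point), together with an equivariant Nakayama argument and the Reynolds operator to spread the identification and the generator choice to an invariant neighbourhood.
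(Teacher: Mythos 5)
Your proposal is correct and takes essentially the same route as the paper: invoke the equivariant (relative) version of \cite[Theorem~4.1]{JoyceSch} to get a standard-form cdga minimal at $x$, use non-degeneracy of $\omega_0$ together with minimality (and equivariant Zariski shrinking) to upgrade the quasi-isomorphism \eqref{8.1} to an isomorphism, conclude free generation in degrees $-1$ and $-2$, identify $A^{-1}\cong T_{A^0}$ via chosen generators, and read off the invariant $1$-form from $\delta$. The one small inaccuracy is the appeal to Lemma~\ref{flatness of ideal of V^G} to show $V\to C$ is smooth --- this is automatic from $A^0$ being a smooth $S$-algebra in the definition of a standard-form cdga, and the lemma addresses the smoothness of $V^G$ over $C$ rather than of $V$ itself; but the equivariant choice of relative \'etale coordinates you want there does follow by the same Reynolds-operator argument.
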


We can use the above to understand the local structure of quotient stacks that arise as the truncation of $(-1)$-shifted symplectic derived stacks. The following proposition can be deduced by work of Halpern-Leistner \cite{HL}.

\begin{prop} \label{derived local model}
Let $\pmb{\mM} \to C$ be a $(-1)$-shifted symplectic derived stack whose truncation $\mM = [ X / G ] \to C$ is a quotient stack such that the action of $G$ on $X$ is good (and trivial on $C$). Let $x \in \mM$ be a closed point with reductive stabilizer $R$. Then there exists an \'{e}tale morphism $\pmb{f} \colon [\pmb{T} / R] \to \pmb{\mM}$, a point $t \in [\pmb{T} / R]$ fixed by $R$, where $\pmb{T}$ is equivalent to $\dspec \Abul$ with $(\Abul, \delta)$ a special cdga, minimal at $t$, mapping $t$ to $x$ and inducing the inclusion $R \sub G$ on stabilizer groups.

At the classical level, we get an \'{e}tale morphism $f \colon [T/R] \to \mM$. There exist a smooth affine $R$-scheme $S \to C$, a $G$-equivariant embedding $T \to S$ over $C$ minimal at $t$ and an induced invariant 1-form $\omega \in H^0(\Omega_{S/C})$ such that $T = Z ( \omega ) \sub S$ is the zero locus of $\omega$.
\end{prop}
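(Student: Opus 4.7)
The plan is to combine an equivariant slice theorem at the derived level with the affine local model already established in Proposition~\ref{prop 8.8}. In broad strokes, I would first produce an \'etale neighborhood of the form $[\pmb{T}/R]\to\pmb{\mM}$ whose classical truncation encodes an ordinary Luna slice at $x$; then I would pass from the abstract derived affine $\pmb{T}$ to an explicit presentation by a special cdga using Proposition~\ref{prop 8.8}; finally I would extract the smooth ambient scheme $S$ and the invariant $1$-form $\omega$ from that presentation.

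For the first step, the hypotheses of the proposition match exactly those of the derived Luna-type theorem of Halpern-Leistner \cite{HL}: $\pmb{\mM}$ is a derived $1$-Artin stack over $C$ whose truncation $\mM=[X/G]$ has good $G$-action, and $x\in\mM$ is a closed point with reductive stabilizer $R$. That result produces an \'etale (representable) map
\[
\pmb{f}\colon [\pmb{T}/R]\lra \pmb{\mM},
\]
together with a point $t\in[\pmb{T}/R]$ fixed by $R$ mapping to $x$ and inducing $R\hookrightarrow G$ on stabilizers, such that $\pmb{T}$ is an $R$-equivariant affine derived scheme. Restricting to the truncations recovers a classical \'etale map $f\colon [T/R]\to\mM$, which is the usual Luna slice at $x$. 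Since $\pmb{f}$ is \'etale, the $(-1)$-shifted symplectic form on $\pmb{\mM}$ pulls back to a $(-1)$-shifted symplectic form on $[\pmb{T}/R]$, and hence to an $R$-equivariant one on $\pmb{T}$.

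For the second step I apply Proposition~\ref{prop 8.8} to the $R$-equivariant, $(-1)$-shifted symplectic derived affine $R$-scheme $\pmb{T}$ at its fixed point $t$. After possibly shrinking $R$-invariantly around $t$, this yields an equivalence $\pmb{T}\simeq\dspec(\Abul,\delta)$ with $(\Abul,\delta)$ a special cdga in the sense of Definition~\ref{cdga}, minimal at $t$. The same proposition then furnishes a smooth affine $R$-scheme $S=\Spec A^0$ over $C$, an $R$-equivariant closed embedding $T=\Spec H^0(\Abul)\hookrightarrow S$ (minimal at $t$, i.e.\ inducing an isomorphism on tangent spaces modulo the contribution from the deeper part of the cdga), and the invariant $1$-form $\omega\in H^0(\Omega_{S/C})$ arising from $\delta\colon A^{-1}=V^{-1}\cong T_{A^0}\to A^0$, whose zero locus is precisely $T$. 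The triviality of the $G$-action on $C$ guarantees that the constructions of Section~\ref{Family blow} apply and $S\to C$ is smooth.

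The only genuinely nontrivial ingredient is the first step: the existence of the derived \'etale neighborhood $[\pmb{T}/R]\to\pmb{\mM}$. Classically this is Luna's slice theorem, but upgrading it to a derived, $R$-equivariant statement which remembers the $(-1)$-shifted symplectic structure requires the local structure theorem for derived $1$-Artin stacks with reductive stabilizers of Halpern-Leistner, together with the fact that \'etale maps preserve shifted symplectic forms. Once this is available, matching it with Proposition~\ref{prop 8.8} is essentially formal, and the remaining verifications (minimality, that $\omega$ is invariant and cuts out $T$, and that $S\to C$ is smooth) are direct consequences of the cdga presentation and Lemma~\ref{flatness of ideal of V^G}.
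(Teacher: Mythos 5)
Your proposal takes essentially the same route as the paper: invoke Halpern-Leistner's local structure results to obtain an $R$-equivariant derived affine \'etale neighborhood $[\pmb{T}/R]\to\pmb{\mM}$, pull back the $(-1)$-shifted symplectic form along the \'etale map, and then apply Proposition~\ref{prop 8.8} to rewrite $\pmb{T}$ as $\dspec\Abul$ with $\Abul$ special and minimal at $t$, extracting $S=\Spec A^0$ and the $1$-form $\omega$ from the internal differential. The one place where you compress more than the paper does is the first step: you cite a single ``derived Luna-type theorem'' as a black box, whereas the paper constructs the derived slice by hand in three substeps --- using the good action to find a $G$-invariant affine open $U\ni x$, citing \cite[Lemma~2.4]{HL} only for the existence of a derived affine $G$-scheme $\pmb{U}=\dspec B^\bullet$ with $[\pmb{U}/G]\to\pmb{\mM}$ fitting into a fiber square over $[U/G]$, applying the \emph{classical} Luna \'etale slice theorem to $V=\Spec B^0$ to produce $S$ and hence $[T/R]\to[U/G]$, and then lifting the slice to a derived $R$-scheme $\pmb{T}$. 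This is a difference in citation granularity rather than in mathematical substance, but it matters because the paper does not assume a ready-made derived equivariant slice theorem is available to quote; if you wish to keep the streamlined first step you should verify that the result you are invoking from \cite{HL} really does deliver an $R$-equivariant derived affine slice directly, otherwise you need to interpolate the classical Luna slice and derived lift as the paper does.
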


\begin{proof} Let $x \in U$ be a $G$-invariant affine open in $X$ (such exists since the $G$-action on $X$ is good). Since $U$ is affine and $G$ is reductive, by \cite[Lemma~4.2.5]{HL}, there exists an affine derived $G$-scheme $\pmb{U} = \dspec B^\bullet$ such that we have a fiber diagram 
\begin{align*}
\xymatrix{
[U/G] \ar[r] \ar[d] & \mM \ar[d]\\ 
[ \pmb{U} / G ] \ar[r] & \pmb{\mM}.
}
\end{align*}
Let $V = \Spec B^0$. By Luna's \'{e}tale slice theorem, we may pick an affine slice $x \in S$ in $V$ and obtain an \'{e}tale map $[T/R] \to [U/G]$, where $T = U \cap S$, induced from the \'{e}tale map $[S / R] \to [V / G]$. Using $S \to V$, there exists a derived affine scheme $\pmb{T} = \dspec C^\bullet$ with an $R$-action and $C^0 = B^0$, whose classical truncation is $T$, fitting in a fiber diagram
\begin{align*}
\xymatrix{
[T/R] \ar[r] \ar[d] & [U/G] \ar[d]\\ 
[ \pmb{T} / R ] \ar[r] & [ \pmb{U} / G ].
}
\end{align*}
where the lower horizontal arrow is \'{e}tale. Therefore $[ \pmb{T} / R ]$ is $(-1)$-shifted symplectic and we may apply Proposition~\ref{prop 8.8} to deduce that $\pmb{T}$ is equivalent to $\dspec \Abul$, where $(\Abul, \delta)$ is special and minimal at $t$.
\end{proof}

In analogy with the absolute case, we give the following definition.

\begin{defi}
We say that the tuple $\Lambda_V = (U, V, F_V, \omega_V, D_V, \phi_V)$ gives a relative local model structure on $U$ if $V$ is a smooth $G$-equivariant scheme over $C$, in addition to the rest of the data satisfying Setup-Definition~\ref{ind hyp} and one of the following:
\begin{enumerate}
\item $F_V = \Omega_{V/C}$, $D_V = 0$ and $\phi_V \colon \Omega_{V/C} \to \fg^\vee$ is the dual of the $G$-action. In this case, we call $\Lambda_V$ a quasi-critical chart on $V$.
\item $\Lambda_V$ is obtained by a quasi-critical chart by a sequence of Kirwan blowups and taking \'{e}tale slices of closed points with closed orbit.
\end{enumerate}
We then say that the $C$-scheme $U$ is in relative standard form.
\end{defi}

\begin{rmk} \label{abs -1 symple}
One of the main results of \cite{JoyceSch} is that, in the absolute case, if $\Abul$ is a standard form $\bC$-cdga with a $(-1)$-shifted symplectic form $\pmb{\omega}$, then up to equivalence and possible shrinking we have $\pmb{\omega} = (\omega_0, 0, 0, \dots)$, where $\omega_0 = d\phi$ and $\delta \phi = d\Phi$ for $\phi \in (\Omega_\Abul)^{-1}$ and $\Phi \in A^0$. In particular, this implies that $\Spec H^0 (\Abul)$ is the critical locus of $\Phi$ inside $\Spec A^0$. This also works equivariantly as in the above.
\end{rmk} 

\subsection{Comparison of local presentations} We first examine how the 1-form $\omega$ changes if one moves $\pmb{\omega}$ within its equivalence class.

\begin{prop} \label{prop 8.9}
Let $(\Abul, \delta)$ be a special cdga, minimal at $x$. Suppose that $\pmb{\omega}, \pmb{\eta}$ are equivalent $(-1)$-shifted symplectic forms on $[ \dspec \Abul / G ]$. Then, up to equivariant shrinking of $V$ around $x$, these induce 1-forms $\omega, \eta \in H^0(\Omega_{V/C})$ which are $\Omega$-equivalent:
\begin{enumerate}
\item We have an equality of ideals in $A^0$, $(\omega) = (\eta) = I_U$.
\item There exist equivariant morphisms $B, C \colon \Omega_{V/C} \to T_{V/C}$ such that
\begin{align*}
\omega^\vee - \eta^\vee = \eta^\vee B^\vee d\eta^\vee \ \text{mod} \ I_U^2
\end{align*}
and
\begin{align*}
\eta^\vee - \omega^\vee = \omega^\vee C^\vee d\omega^\vee \ \text{mod} \ I_U^2.
\end{align*}
\end{enumerate}
\end{prop}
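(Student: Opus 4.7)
The 1-form $\omega$ on $V$ is constructed as follows: non-degeneracy of $\omega_0$ provides a quasi-isomorphism as in \eqref{8.1} which, by minimality at $x$, is an honest $\oO_V$-module isomorphism $\psi_\omega : A^{-1} \xrightarrow{\sim} T_{V/C}$ after equivariant shrinking of $V$ around $x$. Composing with the internal differential $\delta : A^{-1} \to A^0$ yields $\omega^\vee = \delta \circ \psi_\omega^{-1} : T_{V/C} \to \oO_V$, which is dual to $\omega \in H^0(\Omega_{V/C})$. The same construction applied to $\eta_0$ produces an isomorphism $\psi_\eta$ and a 1-form $\eta$. Since both $\psi_\omega$ and $\psi_\eta$ are $\oO_V$-module isomorphisms, the images of $\omega^\vee$ and $\eta^\vee$ in $\oO_V$ coincide, each equaling $\delta(A^{-1}) = I_U$; this establishes (1).

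For (2), expand the equivalence data $\omega_0 - \eta_0 = \delta \alpha_0$ in minimal local coordinates $x_i, y_j, w_k$ for $\Abul$. The 2-form $\alpha_0 \in (\wedge^2 \Omega_{\Abul})^{-2}$ decomposes into pieces of internal degree $-2$, namely of type $dy_i \wedge dy_j$ and $dw_k \wedge dx_l$. Applying $\delta$ and extracting the contribution to $\omega^\vee - \eta^\vee$ modulo $I_U^2$, the relevant piece is encoded by a symmetric $\oO_V$-tensor $A^{jl}$; interpreted through $\psi_\eta$ as an $\oO_V$-linear map $A^\vee : \Omega_{V/C} \to T_{V/C}$, this yields the first $\Omega$-equivalence relation with $B = A$. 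The components of $\alpha_0$ involving the $w_k$ factor through $\delta w_k \in A^{-1}$, whose further image under $\delta$ lies in $I_U$; these therefore contribute products that, once combined with the overall $\omega$- or $\eta$-factor, lie in $I_U^2$ and are absorbed. The second relation (existence of $C$) follows by the symmetric argument using $-\alpha_0$ to exhibit the reverse equivalence of $\pmb{\eta}$ with $\pmb{\omega}$.

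Equivariance of the constructed $B$ and $C$ is standard: since $\omega_0, \eta_0$ and, within its equivalence class, $\alpha_0$ may be chosen $G$-invariant, applying the Reynolds operator yields equivariant tensors, exactly as in the proof of Proposition~\ref{equivalence for df and dg}. The main technical obstacle is the bookkeeping of the coordinate expansion of $\delta \alpha_0$: one must verify that, modulo $I_U^2$, the only surviving terms assemble into the prescribed shape $\eta^\vee \circ B^\vee \circ d\eta^\vee$, which requires simultaneously using minimality of the cdga at $x$, the Maurer-Cartan-type condition $\delta^2 = 0$ on the $w_k$, and the Leibniz rule for $\delta$. A cleaner alternative is to first establish a relative version of Remark~\ref{abs -1 symple} over $C$ (a family analogue of Joyce's derived Darboux theorem), reducing to the case $\omega = d_{V/C}\Phi$ and $\eta = d_{V/C}\Psi$, where the equivalence of symplectic forms translates into $\Phi - \Psi \in I_U^2$ modulo equivalence; Proposition~\ref{equivalence for df and dg} then directly delivers the conclusion.
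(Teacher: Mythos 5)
Your proof follows essentially the same route as the paper: extract $\omega,\eta$ from $\omega_0,\eta_0$ via minimality so that $(\omega)=(\eta)=\delta(A^{-1})=I_U$, expand $\omega_0-\eta_0=\delta\alpha_0$ in minimal coordinates, read off $B$ from the symmetric $dy_i\,dy_j$-coefficient $E_{ij}$ of $\alpha_0$, eliminate the $dw_k\,dx_i$-terms, and Reynolds-average for equivariance. One small imprecision: your phrasing that the $w_k$-contributions ``lie in $I_U^2$ and are absorbed'' understates the mechanism --- after contracting the $dy_j$-coefficient with $s_j$, the term $\sum_{i,j,k}F_{ik}W_{kj}s_j\,dx_i$ vanishes \emph{identically} because $\sum_j W_{kj}s_j=\delta^2 w_k=0$ (this is exactly the paper's use of \eqref{loc 2}); your later acknowledgement that $\delta^2=0$ on the $w_k$ is essential is the accurate statement. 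Also, the suggested ``cleaner alternative'' --- a relative Darboux theorem over $C$ reducing to $\omega=d_{V/C}\Phi$, $\eta=d_{V/C}\Psi$, followed by Proposition~\ref{equivalence for df and dg} --- is not available here: Proposition~\ref{prop 8.8} deliberately establishes only a ``half-Darboux'' presentation $U=Z(\omega)$ without a potential, and Proposition~\ref{prop 8.9} together with $\Omega$-equivalence is precisely the device for working around the absent relative potential, so that route would be a harder detour rather than a simplification.
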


\begin{proof} Let $x_i$ be an \'{e}tale basis for $V$ over $C$.
As in the discussion preceding Definition~\ref{cdga}, we may write $\omega_0 = \sum_i dy_i^\omega dx_i$ and $\eta_0 = \sum_i dy_i^\eta dx_i$, where $y_i^\omega$ and $y_i^\eta$ are bases for $A^{-1}$, for the 0-part of the pullbacks of $\pmb{\omega}$ and $\pmb{\eta}$ to $\dspec \Abul$. Thus we have
\begin{align*}
y_i^\omega = \sum_k J_{ik}^\omega y_k, \ y_i^\eta = \sum_k J_{ik}^\eta y_k, \ J_{ik}^\omega, J_{ik}^\eta \in A^0.
\end{align*}
The induced 1-forms are then given by 
\begin{align} \label{loc 1}
\omega = \sum_i \delta y_i^\omega \ dx_i = \sum_{i,k} J_{ik}^\omega s_k dx_i, \\ 
\notag \eta = \sum_i \delta y_i^\eta \ dx_i =\sum_{i,k} J_{ik}^\eta s_k dx_i,
\end{align}
where we write $s_i = \delta y_i$ for convenience. 
Since $y_i, y_i^\omega, y_i^\eta$ are all bases for $A^{-1}$ it is clear that $I_U = (\delta y_i) = (\delta y_i^\omega) = (\delta y_i^\eta)$, which proves (1).

Let $\delta w_j = \sum_i W_{ji} y_i$, where $W_{ji} \in A^0$. Since $\delta^2 (w_j) = 0$ we have
\begin{align} \label{loc 2}
\sum_{i,j} W_{ji} s_i = 0.
\end{align}

Now $\pmb{\omega}, \pmb{\eta}$ are equivalent as symplectic forms so in particular we have $\omega_0 - \eta_0 = \delta \alpha_0$ for some $\alpha_0 \in \left( \wedge^2 \Omega_{\Abul} \right)^{-2}$. By degree considerations, we may write
\begin{align*}
\alpha_0 = \sum_{ij} E_{ij} dy_i dy_j + \sum_{ik} F_{ik} dw_k dx_i + \alpha_0', \ E_{ij}, F_{ik} \in A^0,
\end{align*}
where $\alpha_0'$ is a 2-form, whose every term is divisible by some of the $y_i$, and we may assume without loss of generality that $E_{ij}$ is symmetric. Thus $\delta \alpha_0' \in I_U \cdot \left( \wedge^2 \Omega_{\Abul}^1 \right)^{-1}$ and we have
\begin{align*}
\delta \alpha_0 &  = -2\sum_{i,j} E_{ij} ds_i dy_j - \sum_{i,k} F_{ik} d(\delta w_k) dx_i \ \left( \text{mod} \ I_U \right) \\
& = -2\sum_{i,j} E_{ij} ds_i dy_j - \sum_{i,j,k} F_{ik} W_{kj} dx_i dy_j \ \left( \text{mod} \ I_U \right) .
\end{align*}
We have also
\begin{align*}
\omega_0 - \eta_0 & = \sum_i dy_i^\omega dx_i - \sum_i dy_i^\eta dx_i \\
& = \sum_{i,k} dJ_{ik}^\omega y_k dx_i - \sum_{i,k} J_{ik}^\omega dy_k dx_i - \sum_{i,k} dJ_{ik}^\eta y_k dx_i + \sum_{i,k} J_{ik}^\eta dy_k dx_i.
\end{align*}
By comparing the coefficients of each $dy_j$, we obtain a relation
\begin{align*}
\sum_i J_{ij}^\omega dx_i - \sum_i J_{ij}^\eta dx_i = 2 \sum_i E_{ij} ds_i + \sum_{i,k} F_{ik} W_{kj} dx_i  \ \left( \text{mod} \ I_U \right)
\end{align*}
for each index $j$. Then, using \eqref{loc 1},
\begin{align} \label{loc 3}
\omega - \eta & = \sum_{i,k} (J_{ik}^\omega dx_i - J_{ik}^\eta dx_i) s_k \\
\notag & = 2 \sum_{i,k} E_{ik} ds_i \ s_k + \sum_{i,k,j} F_{ij}W_{jk}s_k dx_i \ \left( \text{mod} \ I_U^2 \right)\\
\notag & = 2 \sum_{i,k} E_{ik} ds_i \ s_k \ \left( \text{mod} \ I_U^2 \right),
\end{align}
where in the second expression the second term is zero using \eqref{loc 2}.

Note that in order to derive \eqref{loc 3}, we only used the fact that $y_i$ is a basis for $A^{-1}$. So we could repeat the exact same analysis and obtain a similar equation with $s_i$ replaced by $s_i^\omega = \delta y_i^\omega$ or $s_i^\eta = \delta y_i^\eta$ and $E_{ik}$ by $E_{ik}^\omega$ or $E_{ik}^\eta$ respectively. It is then easy to see that these exactly imply (2), with the coefficients of $B, C$ being determined by $E_{ik}^\omega, \ E_{ik}^\eta$ after averaging over $G$ to make the morphism equivariant ($\omega - \eta$ is already invariant).
\end{proof}

Suppose now that we have \'{e}tale morphisms $\pmb{f}\lalp \colon [\pmb{T}\lalp / R\lalp] \to \pmb{\mM}$ and $\pmb{f}\lbet \colon [\pmb{T}\lbet / R\lbet] \to \pmb{\mM}$ as in Proposition~\ref{derived local model}, where $\pmb{T}\lalp$ is equivalent to $\dspec \Abul$ and $\pmb{T}\lbet$ is equivalent to $\dspec B^\bullet$, with $\Abul, B^\bullet$ special equivariant cdgas. Let $z \in [\pmb{T}\lalp / R\lalp] \times_{\pmb{\mM}} [\pmb{T}\lbet /R\lbet]$ be a closed point with stabilizer $H$. 

Note that $\pmb{f}\lalp, \pmb{f}\lbet$ are also affine. Then, since $T\lalp, T\lbet \to \mM$ are affine, the diagonal of $\mM$ is affine, and a derived scheme is affine if and only if its truncation is affine,  we obtain that $\pmb{T}\lalp \times_{\pmb{\mM}} \pmb{T}\lbet$ is an affine derived scheme with an action of $R\lalp \times R\lbet$ such that $[ \pmb{T}\lalp \times_{\pmb{\mM}} \pmb{T} \lbet / R\lalp \times R\lbet ]$ is $(-1)$-shifted symplectic. Then, there exists a special cdga $C^\bullet$ with an $H$-action, minimal at $t\lab$, such that we have $H$-equivariant morphisms $\alpha \colon \Abul \to C^\bullet,\ \beta \colon B^\bullet \to C^\bullet$ and a commutative diagram of \'{e}tale arrows
\begin{align} \label{derived commutat}
\xymatrix{
[\dspec C^\bullet / H] \ar[d]_-{\pmb{\theta\lalp}} \ar[r]^{\pmb{\theta\lbet}} & [\dspec B^\bullet / R\lbet] \ar[d]^-{\pmb{f}\lbet} \\
[\dspec A^\bullet / R\lalp] \ar[r]_-{\pmb{f}\lalp} & \pmb{\mM}.
}
\end{align}
Moreover, the morphism $[ \dspec C^\bullet / H ] \to [ \pmb{T}\lalp \times_{\pmb{\mM}} \pmb{T}\lbet / R\lalp \times R\lbet ]$ is \'{e}tale and maps $t\lab$ to $z$.

We can recast the above data at the level of classical stacks and schemes. We have an \'{e}tale map $f\lalp \colon [T\lalp / R\lalp ] \to \mM$, where $T\lalp \sub S\lalp$ is the zero locus of an invariant section $\omega\lalp$ of $\Omega_{S\lalp/C}$ and $S\lalp \to C$ is smooth, $R\lalp$-equivariant. Similar data is obtained for the \'{e}tale map $[T\lbet/R\lbet] \to \mM$. The above diagram shows that we have the following comparison data: 
\begin{enumerate}
\item We have an affine, smooth $H$-scheme $S\lab \to C$ with an invariant section $\omega\lab$ of $\Omega_{S\lab / C}$ with zero locus $T\lab$, minimal at a point $t\lab$ fixed by $H$. Here $T\lab$ is the truncation of the derived scheme $\dspec C^\bullet$.
\item There exist $H$-equivariant unramified morphisms $\theta\lalp \colon S\lab \to S\lalp$ and $\theta \lbet \colon S\lab \to S\lbet$, inducing unramified morphisms $T\lab \to T\lalp$ and $\ T\lab \to T\lbet$.
\item $\eta_{\theta\lalp} (\omega\lalp),\ \eta_{\theta\lbet} (\omega\lbet)$ are $\Omega$-equivalent to $\omega\lab$.
\item We have a commutative diagram with \'{e}tale arrows
\begin{align} \label{diagram}
\xymatrix{
[T\lab / H] \ar[d]_-{\theta\lalp} \ar[r]^-{\theta\lbet} & [T\lbet / R\lbet] \ar[d]^-{f\lbet} \\
[T\lalp / R\lalp] \ar[r]_-{f\lalp} & \mM.
}
\end{align}
\end{enumerate}

\begin{defi} \emph{(Common roof)}
If the above four conditions hold, coming from a diagram \eqref{derived commutat}, we say that the quasi-critical chart $\Lambda_{S\lab}$ is a common roof for the quasi-critical charts $\Lambda_{S\lalp}$ and $\Lambda_{S\lbet}$. More generally, the same definition applies to any two relative local models which are not necessarily quasi-critical charts.
\end{defi}

\begin{rmk} \label{cocycle for derived}
Suppose that we have a common roof coming from a commutative diagram \eqref{derived commutat} where $\pmb{\mM} = [ \dspec D^\bullet / G ]$ and $R\lalp = R\lbet = H$. Moreover, assume that we have two compositions $g\lalp \colon D^\bullet \to A^\bullet \to C^\bullet$ and $g\lbet \colon D^\bullet \to B^\bullet \to C^\bullet$ such that $\dspec g\lalp$, $\dspec g\lbet$ become equivalent when composed with the quotient morphism $\dspec D^\bullet \to [ \dspec D^\bullet / G ]$ and induce the diagram \eqref{derived commutat}. If $g\lalp, \ g\lbet \colon D_0 \to C_0$ are the induced morphisms in degree $0$ and we denote $V = \Spec D^0$, $$g\lalp^b = \theta\lalp^b \circ f\lalp^b, \ g\lbet^b = \theta\lbet^b \circ f\lbet^b \colon \Ob_V^\rred |_{T\lab^s} \lra \Ob_{S\lab}^\rred$$ are obtained by the corresponding maps from \eqref{derived commutat} on cotangent complexes of quotient stacks, as in Lemma~\ref{passing to a slice red}. It follows that 
for the (non-commutative) diagram of quotient stacks
\begin{align*}
\xymatrix{
[\Spec C^0 / H] \ar[d]_-{\theta\lalp} \ar[r]^-{\theta\lbet} & [\Spec B^0  / H] \ar[d]^-{f\lbet} \\
[\Spec A^0  / H] \ar[r]_-{f\lalp} & [ \Spec D^0  / G ].
}
\end{align*}
which is compatible with \eqref{diagram}, there is a natural equivalence between the compositions $\theta\lalp^b \circ f\lalp^b$ and $\theta\lbet^b \circ f\lbet^b$, which is also compatible with the commutativity of \eqref{diagram}.

Another way to see this more concretely is when $G$ is a Zariski open subscheme of an affine space. This is the case in our application to Donaldson-Thomas invariants, since closed points correspond to polystable sheaves whose stabilizers are products of general linear groups. The fact that $g\lalp, g\lbet$ composed with $\dspec D^\bullet \to [ \dspec D^\bullet / G ]$ are equivalent implies then that there exists a morphism $\pmb{h} = \dspec h \colon \dspec C^\bullet \to G$ such that the composition 
$$\dspec g\lbet' \colon \dspec C^\bullet \xrightarrow{(\id, \pmb{h})} \dspec C^\bullet \times G \xrightarrow{(\dspec g\lbet, \id)} \dspec D^\bullet \times G \lra \dspec D^\bullet,$$
where the last arrow is given by the group action, gives a map of cdgas $D^\bullet \to C^\bullet$ which is homotopic to $g\lbet$. In particular, the induced morphisms $g\lbet \colon D^0 \to C^0$ and $g\lbet' \colon D^0 
\to C^0$ satisfy $g\lbet - g\lbet' \colon D^0 \to \im C^{-1} = \im \omega\lab^\vee$ and thus, as in Lemma~\ref{cocyle for blow and slice}, we get that $$g\lbet^b = (g\lbet')^b \colon \Ob_V^\rred |_{T\lab^s} \lra \Ob_{S\lab}^\rred$$
and the induced morphism $h \colon \Spec C^0 \to G$ gives the data for the equivalence mentioned above. 
\end{rmk}

\subsection{Obstruction theory in the relative case} As before, suppose that $\pmb{\mM} \to C$ is a $(-1)$-shifted symplectic derived stack, whose truncation $\mM = [ X / G ]$ is a quotient stack obtained by GIT, where $X$ is a $C$-scheme and $G$ acts trivially on $C$.

The Kirwan partial desingularization procedure goes through in exactly the same way as in the absolute case. We have the following modified version of Proposition~\ref{long prop} in this relative situation.

The main difference stems from the fact that we need to use local models, arising from special cdga's (cf. Definition~8.8), which satisfy a minimality property (cf. Definition 8.6). Thus we can no longer use slices $S\lalp$ coming from a global embedding $\mM \to \pP$ into a smooth quotient stack and we will need to use the notion of $\Omega$-compatibility to address the issue of extra coordinates. In particular, in the analogue of diagram \eqref{rhombus}, we will be missing the arrows $S\lalp \to \pP_n, \ S\lbet \to \pP_n$ and $S\lalp, S\lbet$ can have extra coordinates.

\begin{prop} \label{long prop2}
For each $n \geq 0$, let $\eE_n^X$ be the union of all exceptional divisors in $\mM_n$. There exist collections of \'{e}tale morphisms $[T\lalp / R\lalp] \to \mM_n$ such that:
\begin{enumerate}
\item For each $\alpha$, $R\lalp \in \lbrace R_1, ..., R_n \rbrace$.
\item Each $T\lalp$ is in relative standard form for data $$\Lambda_{S\lalp} = (T\lalp, S\lalp, F\lalp, \omega\lalp, D\lalp, \phi\lalp)$$ of a relative local model on a smooth affine $R\lalp$-scheme $S\lalp$.
\item The collections cover $\eE_n^X$ respectively.
\item The identity components of stabilizers that occur in $\mM_n$ lie, up to conjugacy, in the set $\lbrace R_{n+1}, ..., R_m \rbrace$.
\item $\Lambda_{S\lalp}$ restricted to the complement of the union of exceptional divisors $\eE_{S\lalp} \sub S\lalp$ are the same as those of a quasi-critical chart on $T\lalp$.
\item For indices $\alpha, \beta$ and $q \in [T\lalp / R\lalp] \times_{\mM_n} [T\lbet / R\lbet]$ whose stabilizer has identity component conjugate to $R_q$, there exist an affine $T\lab$ in relative standard form for data $\Lambda_{S\lab}=(T\lab, S\lab, F\lab, \omega\lab, D\lab, \phi\lab)$ of a relative local model and an equivariant commutative diagram
\begin{align} \label{rhombus2}
\xymatrix{
& S\lab \ar[dl]_-{\theta\lalp} & T\lab \ar[dr]_-{i\lbet} \ar[dl]^-{i\lalp} \ar[l] \ar[r] &  S\lab \ar[dr]^-{\theta\lbet} &\\ 
S\lalp & T\lalp \ar[l] \ar[dr] & & T\lbet \ar[r] \ar[dl] & S\lbet \\
& & \mM_n, & &
}
\end{align}
such that $\Lambda_{S\lab}$ is a common roof for $\Lambda_{S\lalp}$ and $\Lambda_{S\lbet}$ coming from a diagram of the form \eqref{derived commutat}. Moreover, $\theta \lalp$ factors as $S\lab \to S\lalp' \to S\lalp$ where $S\lalp'$ is an \'{e}tale slice for $S\lalp$ at the image of $q$, $T\lab \to T\lalp'$ is \'{e}tale and $S\lab \to S\lalp'$ is unramified. Analogous conditions hold for the index $\beta$.
\item For each index $\alpha$, consider the 4-term complex
$$ K\lalp = [ \fr\lalp \lr T_{S\lalp} |_{T\lalp} \xrightarrow{(d\omega_{S\lalp}^\vee)^\vee} F\lalp|_{T\lalp} \xrightarrow{\phi_{S\lalp}} \fr\lalp^\vee(-D\lalp)],$$
where by convention we place $F_{S\lalp}|_{T\lalp}$ in degree $1$. $\theta\lalp$ induces an isomorphism $\theta\lalp^b \colon \Ob_{S\lalp}^\rred |_{T\lab^s} \to \Ob_{S\lab}^\rred$. 
This also does not change if we replace $\omega_{S\lalp}$ by any $\Omega$-equivalent section. Analogous statements are true for the index $\beta$.

\item We obtain comparison isomorphisms $$\theta\lab^b := (\theta\lbet^b)^{-1} \theta\lalp^b \colon \Ob_{S\lalp}^\rred |_{T\lab^s} \to \Ob_{S\lbet}^\rred |_{T\lab^s}.$$ These give the same obstruction assignments for the complexes $K\lalp^\rred$, $K\lbet^\rred$ on $T\lab^s$.

\item Let now $q$ be a point in the triple intersection $$q \in [T\lalp / R\lalp] \times_{\mM_n} [T\lbet / R\lbet] \times_{\mM_n} [T_\gamma / R_\gamma]$$ with stabilizer in class $R_q$. We then have a commutative schematic $R_q$-equivariant diagram of common roofs
\begin{align*}
\xymatrixrowsep{0.2in}
\xymatrixcolsep{0.2in}
\xymatrix{
 & & & \Lambda_{\alpha \beta \gamma} \ar[dl] \ar[dr] & & & \\
& &  \Lambda_{\alpha\beta, \beta\gamma} \ar[dl] \ar[dr] & & \Lambda_{\beta\gamma, \gamma\alpha} \ar[dl] \ar[dr] \\
 & \Lambda\lab \ar[dl] \ar[dr]  & &  \Lambda_{\beta\gamma} \ar[dl] \ar[dr] & & \Lambda_{\gamma\alpha} \ar[dl] \ar[dr] \\
\Lambda\lalp & & \Lambda_\beta & & \Lambda_\gamma & & \Lambda\lalp
}
\end{align*}
such that all the morphisms between local models $\Lambda\lal \to \Lambda_\mu$, where $\lambda, \mu$ are multi-indices, satisfy the properties of a roof, the morphisms $T\lal \to T_\mu$ are \'{e}tale, and we have induced isomorphisms $\Ob_\mu^\rred |_{T\lal^s} \to \Ob\lal^\rred$.
The descents of $\theta\lab^b, \theta_{\beta \gamma}^b, \theta_{\gamma \alpha}^b$ to $[T_{\alpha \beta \gamma}^s / R_q]$ satisfy the cocycle condition.
\end{enumerate}
\end{prop}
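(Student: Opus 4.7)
The plan is to mirror the inductive scheme of Proposition~\ref{long prop}, with two essential substitutions: local models are produced from the derived symplectic structure via Proposition~\ref{derived local model} rather than from a fixed projective embedding of $\mM$, and comparison data between charts is extracted from derived fibre products rather than from common slices in an ambient smooth scheme. Accordingly I proceed by induction on $n$. For $n=0$, each closed point $x \in \mM$ with stabilizer conjugate to some $R \in \fR(X)$ is covered by an \'etale map $[T_x / R] \to \mM$ equipped with a quasi-critical chart $\Lambda_{S_x}$, obtained from Proposition~\ref{derived local model}. For the inductive step from $n$ to $n+1$, I classify closed points of $\mM_n$ with stabilizer conjugate to $R_{n+1}$ into those already covered by an existing $[T_\alpha / R_\alpha]$ (type~I: take an $R_{n+1}$-slice in $S_\alpha$ and Kirwan blow up via Lemma~\ref{Obs bundle for blow}) and those not yet covered (type~II: produce a fresh quasi-critical chart via Proposition~\ref{derived local model}, then Kirwan blow up). Conditions (1)--(5) are immediate from the construction, using Proposition~\ref{intrinsic family U general} to preserve smoothness over $C$.

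The substantive content is (6). Given a closed point $q$ of the fibred intersection $[T_\alpha / R_\alpha] \times_{\mM_n} [T_\beta / R_\beta]$ with stabilizer of identity component $R_q$, I must produce a common roof. For two type~I charts I descend to the previous stage of the induction, where the hypothesis already supplies a roof, and then lift it across the Kirwan blowup and the subsequent $R_{n+1}$-slice using Lemmas~\ref{comparison}, \ref{comparison for slice}, \ref{omega compat for blowups} and \ref{omega compat for slices}; Remark~\ref{emb to unram} handles that the morphisms are only unramified rather than locally closed embeddings. For the genuinely new situation of two type~II charts coming from local models at $\mM$ itself, I invoke the derived picture: since $\pmb{f}_\alpha, \pmb{f}_\beta$ are affine (the diagonal of $\mM$ is affine, and affineness passes between a derived scheme and its truncation), the derived fibre product $\pmb{T}_\alpha \times_{\pmb{\mM}} \pmb{T}_\beta$ is an affine derived scheme with an $R_\alpha \times R_\beta$-action and a $(-1)$-shifted symplectic structure. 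Presenting it by a special $H$-equivariant cdga $C^\bullet$ minimal at $t_{\alpha\beta}$ yields the diagram \eqref{derived commutat}, and hence at the level of truncations the classical diagram \eqref{rhombus2}; the $\Omega$-equivalence of $\eta_{\theta_\alpha}(\omega_\alpha)$ and $\eta_{\theta_\beta}(\omega_\beta)$ with $\omega_{\alpha\beta}$ is delivered by Proposition~\ref{prop 8.9}, applied to the two pullbacks of the symplectic form of $\pmb{\mM}$, which define equivalent $(-1)$-shifted symplectic forms on $\dspec C^\bullet$.

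Granted (6), the remaining items are largely formal. Item (7) is Lemma~\ref{passing to a slice red} together with Lemma~\ref{Obstruction sheaf comparison} for the $\Omega$-invariance. Item (8) follows by composing the two isomorphisms of (7) across the roof and invoking Lemma~\ref{obs assign for omega compat} to match obstruction assignments. For (9), the triple common roof $\Lambda_{\alpha\beta\gamma}$ is produced by iterating the derived fibre product construction: take the pairwise roofs $\Lambda_{\alpha\beta}, \Lambda_{\beta\gamma}, \Lambda_{\gamma\alpha}$ from (6) and pair them again derivedly in stages to obtain $\Lambda_{\alpha\beta,\beta\gamma}, \Lambda_{\beta\gamma,\gamma\alpha}$ and ultimately $\Lambda_{\alpha\beta\gamma}$ sitting on top of all three. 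The cocycle condition for $\theta_{\alpha\beta}^b, \theta_{\beta\gamma}^b, \theta_{\gamma\alpha}^b$ on $[T_{\alpha\beta\gamma}^s/R_q]$ then descends from the commutativity, at the level of derived quotient stacks, of the associated diagram, propagated to the reduced obstruction sheaves through Remark~\ref{cocycle for derived}: the two compositions differ only by a $G$-translation described by a morphism $\Spec C^0 \to G$, and the analogue of Lemma~\ref{cocyle for blow and slice} shows that this discrepancy is absorbed by $\im \omega_{\alpha\beta\gamma}^\vee$ and hence vanishes after passing to reduced obstruction sheaves.

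I expect the principal difficulty to lie in (9): unlike the absolute case, there is no single ambient projective embedding realizing the triple fibre product simultaneously, so the commutativity witnessed only by the derived symplectic structure on $\pmb{\mM}$ must be transported through a long sequence of Kirwan blowups and slice choices while remaining compatible with $\Omega$-equivalence. The key input enabling this is Remark~\ref{cocycle for derived}, which reduces the potential cocycle obstruction to an action of $G$ that is invisible to the reduced obstruction sheaves; verifying this invisibility in the presence of all the successive blowup and slice operations is the most delicate bookkeeping in the argument.
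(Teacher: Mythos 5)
Your proposal follows essentially the same route as the paper: induction mirroring the absolute Proposition~\ref{long prop} with type I/type II charts, roofs via derived fibre products presented by special minimal cdgas, $\Omega$-equivalence from the equivalence of pulled-back $(-1)$-shifted symplectic forms, and the cocycle condition via Remark~\ref{cocycle for derived} together with Lemmas~\ref{omega compat for blowups}, \ref{omega compat for slices}, \ref{cocyle for blow and slice} and \ref{passing to a slice red}. The paper's actual proof is terser and explicitly delegates the verification of (9) to the reader, so your version supplies more of the bookkeeping; the one place you should tread carefully is the invocation of Proposition~\ref{prop 8.9} for the $\Omega$-equivalence of $\eta_{\theta_\alpha}(\omega_\alpha)$ with $\omega_{\alpha\beta}$, since the unramified map $\theta_\alpha$ drops coordinates and the pulled-back form lives on a cdga with a different generating set, so one must route through the $\Omega$-compatibility machinery rather than apply the proposition directly to a single fixed cdga --- a subtlety the paper flags (extra coordinates) but likewise does not spell out.
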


\begin{proof} 
In the same way as in the absolute case, all conditions (1)-(9) are preserved at each inductive step. So we only need to check that they hold in the beginning and also address the differences in (6), (7) and (9) from the absolute case.

By the previous subsection, roofs exist and they satisfy the conditions of (6) by construction, since we may first replace $\pmb{T\lalp} / R\lalp$ by an \'{e}tale slice $\pmb{T\lalp}' / H$ (and similarly for the index $\beta$) and then take a common roof. In particular, by Lemma~\ref{omega compat for blowups} and Lemma~\ref{passing to a slice red} we immediately deduce that $\theta\lalp^b$ indeed induce isomorphisms on the reduced obstruction sheaves. It is at this point where we have to use the notion of $\Omega$-compatibility to deal with possible extra coordinates.

Finally, the cocycle condition holds by applying Remark~\ref{cocycle for derived} and using Lemma~\ref{omega compat for blowups}, ~\ref{omega compat for slices}, ~\ref{cocyle for blow and slice} and ~\ref{passing to a slice red}, which are valid in the relative setting, and going through the appropriate diagrams of roofs granted by (9). We leave the details to the reader. \end{proof}

We have shown that we can follow the same steps as in the absolute case to obtain the following theorem.

\begin{thm} \label{main thm rel}
Let $\mM = \left[ X / G \right] \to C$, where $X \to C$ is a quasi-projective scheme, which is the semistable part of a projective scheme $X^\dagger \to C$ with a linearized $G$-action and $C$ is a smooth, quasi-projective curve with a trivial $G$-action.
Suppose also that $\mM$ is the truncation of a relative $(-1)$-shifted symplectic derived stack $\pmb{\mM}$ over $C$. Then the intrinsic stabilizer reduction $\tilde{\mM} \to C$ is a proper DM stack over $C$ endowed with a canonical  relative semi-perfect obstruction theory induced by the relative $(-1)$-shifted symplectic form, giving rise to a virtual fundamental cycle 
$[\tilde{\mM}]^{\mathrm{vir}}$. 

Furthermore, the restriction of the obstruction theory to the fiber $\tilde{\mM}_c$ over a point $c \in C$ is the semi-perfect obstruction theory of Theorem~\ref{main thm}, constructed in the absolute case.
\end{thm}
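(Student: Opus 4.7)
The plan is to mirror the construction of Theorem~\ref{main thm} while working over the base curve $C$ and replacing the role of the global embedding $X\sub P$ by the local derived-geometric models provided by Proposition~\ref{derived local model} and the comparison data of Proposition~\ref{long prop2}. First I would apply the Kirwan partial desingularization procedure of Section~\ref{Kirwan blow-up} to $X\to C$, treating the $G$-action on $C$ as trivial. Since $X^\dagger\to C$ is projective and $G$-linearized, $\ti X\to X\to C$ is projective, so $\ti\mM=[\ti X/G]\to C$ is proper. By the inductive analysis of Section~\ref{inductive step} (each step is a Kirwan blowup of a smooth ambient $G$-scheme together with a proper transform), combined with Proposition~\ref{intrinsic family V smooth} and Corollary~\ref{intrinsic family U general}, the procedure commutes with restriction to each fiber $c\in C$, giving the identification $(\ti X)_c=\tilde{X_c}$; in particular the stabilizers of closed points of $\ti\mM$ are finite so $\ti\mM$ is a DM stack over $C$.

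Next I would build the relative semi-perfect obstruction theory. Start from the \'etale cover $\{[T\lalp/R\lalp]\to\ti\mM\}$ together with relative local models $\Lambda_{S\lalp}$ and common-roof data supplied by Proposition~\ref{long prop2}. On each chart the 4-term complex $K\lalp$ of Setup-Definition~\ref{ind hyp} is defined over $C$, and Lemma~\ref{Obs bundle for blow} guarantees that $K\lalp$ arises from a quasi-critical chart after iterated Kirwan blowups and slicings, so its reduced 2-term quotient $K\lalp^\rred$ gives a $C$-relative perfect obstruction theory on $[T\lalp^s/R\lalp]$ (replacing $\Omega_{V}$ by $\Omega_{V/C}$ and $T_V$ by $T_{V/C}$ throughout the proof of Section~\ref{red obs th sec}). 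The isomorphisms $\theta\lalp^b$ of (7) of Proposition~\ref{long prop2} together with the cocycle condition in (9) glue the sheaves $\Ob_{S\lalp}^\rred$ to a well-defined coherent sheaf $\Ob_{\ti\mM}$ on $\ti\mM$; the matching of obstruction assignments via $\theta\lab^b$ is (8) of the same proposition. This is exactly the data required by Definition~\ref{semi-perfect obs th} (in the mildly generalized form of Remark~\ref{5.10}), so we obtain a relative semi-perfect obstruction theory and hence $[\ti\mM]\virt\in A_\ast \ti\mM$ via Theorem-Definition~\ref{cone}.

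Finally I would check fiberwise compatibility with Theorem~\ref{main thm}. For any $c\in C$, restricting a relative local model $\Lambda_{S\lalp}$ to the fiber $(S\lalp)_c$ produces a local model in the sense of Definition~\ref{local model}: a quasi-critical chart restricts to a d-critical chart (via Remark~\ref{abs -1 symple}, since the fiber of the $(-1)$-shifted symplectic structure is a $(-1)$-shifted symplectic structure on $\pmb{\mM}_c$ and so corresponds to a d-critical structure on $\mM_c$), and the operations of Kirwan blowup and slicing commute with restriction to $c$ by Corollary~\ref{intrinsic family V smooth}. Since the restriction to a fiber of the 4-term complex $K\lalp$ is the 4-term complex of the restricted local model, both the reduced complexes and the comparison morphisms $\theta\lalp^b$ restrict to the ones that appear in the absolute construction. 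Hence the relative semi-perfect obstruction theory restricts to the absolute one of Theorem~\ref{main thm} on each fiber.

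The step I expect to be the main obstacle is verifying the cocycle condition for the comparison isomorphisms $\theta\lab^b$ in the relative setting, because in the absence of a global embedding into a smooth ambient stack one must control the choice of common roof at each triple intersection. This is handled by combining $\Omega$-equivalence of the two symplectic representatives (Proposition~\ref{prop 8.9}), the preservation of $\Omega$-equivalence and $\Omega$-compatibility under Kirwan blowups and slices (Lemmas~\ref{comparison},~\ref{comparison for slice},~\ref{omega compat for blowups},~\ref{omega compat for slices}), the independence of $\Phi^\ck$ from the chosen embedding (Lemma~\ref{cocyle for blow and slice}), and the derived rigidity statement of Remark~\ref{cocycle for derived}; together these produce the triple diagram of roofs in (9) of Proposition~\ref{long prop2}, from which the cocycle identity descends to $[T_{\alpha\beta\gamma}^s/R_q]$.
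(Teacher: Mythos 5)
Your plan is correct and follows the same route as the paper: the paper's proof of Theorem~\ref{main thm rel} is precisely the observation that one repeats the absolute construction, substituting Proposition~\ref{long prop2} for Proposition~\ref{long prop} and using the fiberwise base-change results of Subsection~\ref{Family blow} to identify $(\ti X)_c$ with $\tilde{X_c}$. You usefully spell out the properness and fiber-compatibility steps that the paper leaves implicit; one small bookkeeping slip is that you swapped the types of \ref{intrinsic family V smooth} (a corollary) and \ref{intrinsic family U general} (a proposition), but the mathematics is unaffected.
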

\subsection{Deformation invariance of DTK invariants} Let $W \to C$ be a family of Calabi-Yau threefolds over a smooth affine curve. The relative canonical bundle $K_{W/C}$ is then trivial. 

Let $\mM$ be the moduli stack of relatively semistable sheaves over $C$ of Chern character $\gamma$ on $W$. As in the absolute case, a trivialization of $K_{W/C}$ induces a relative $(-1)$-shifted symplectic structure on the derived stack $\pmb{\mM} \to C$. Then by Theorem~\ref{main thm rel}, it follows that applying the same construction we obtain an intrinsic stabilizer reduction $\tilde{\mM} \to C$ with a relative semi-perfect obstruction theory, which induces the absolute semi-perfect obstruction theory on each fiber $\tilde{\mM}_c$ we constructed earlier. 

Moreover, its fiber over $c\in C$ is the intrinsic stabilizer reduction of the moduli stack of semistable sheaves on $W_c$ by the results of Subsection~\ref{Family blow}. 
Therefore, by ``conservation of number" for semi-perfect obstruction theories \cite[Proposition~3.8]{LiChang}, the generalized Donaldson-Thomas invariant stays the same along the family, i.e. $\mathrm{DT}_\gamma (W_c)$ is constant for $c \in C$.

\begin{thm} \label{deform invariance}
The generalized DT invariant defined in Theorem-Definition \ref{Gen DT} is invariant under deformation of the complex structure of the Calabi-Yau threefold.
\end{thm}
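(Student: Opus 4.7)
The plan is to realize the deformation invariance as an instance of ``conservation of number" for the relative semi-perfect obstruction theory produced by Theorem~\ref{main thm rel}. Because deformation invariance is a local question on the parameter space, I would first reduce to the case of a one-parameter family $W\to C$ over a smooth affine curve $C$, with two chosen fibers $W_{c_0}, W_{c_1}$. Fixing a trivialization of the relative canonical bundle $K_{W/C}$ (which exists after possibly shrinking $C$, since $K_W$ is trivial fiberwise and $C$ is affine), I would form the relative moduli stack $\mM\to C$ of Gieseker semistable sheaves of Chern character $\gamma$. By the results recalled in the previous subsection, $\mM$ is a GIT quotient $[X/G]\to C$ with $G$ acting trivially on $C$, and $\mM$ is the truncation of a relative derived stack $\pmb\mM\to C$ which, by \cite{PTVV}, inherits a relative $(-1)$-shifted symplectic structure from the chosen trivialization of $K_{W/C}$.

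Next I would invoke Theorem~\ref{main thm rel} to produce the relative Kirwan partial desingularization $\ti\mM\to C$, a proper DM stack over $C$, equipped with a canonical relative semi-perfect obstruction theory of virtual dimension zero; this yields a virtual cycle $[\ti\mM]\virt\in A_\ast \ti\mM$. The second crucial input is Proposition~\ref{intrinsic family U general} from Subsection~\ref{Family blow}, which implies that Kirwan blowups commute with restriction to a fiber, and hence $(\ti\mM)_c = \widetilde{\mM_{W_c}}$ canonically for every $c\in C$. The final clause of Theorem~\ref{main thm rel} tells us that the restriction of the relative semi-perfect obstruction theory to the fiber $\ti\mM_c$ coincides with the absolute semi-perfect obstruction theory of Theorem~\ref{main thm} used in Theorem-Definition~\ref{Gen DT} to define $\mathrm{DT}_\gamma(W_c)$.

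Having set up these pieces, I would conclude by applying the deformation invariance property of virtual cycles produced by semi-perfect obstruction theories, namely \cite[Proposition~3.8]{LiChang}: for a proper DM stack $\ti\mM\to C$ with a relative semi-perfect obstruction theory, the degree of the induced virtual cycle on each fiber is locally constant on $C$. Combined with the previous step, this gives $\deg[\widetilde{\mM_{W_{c_0}}}]\virt = \deg[\widetilde{\mM_{W_{c_1}}}]\virt$, that is $\mathrm{DT}_\gamma(W_{c_0}) = \mathrm{DT}_\gamma(W_{c_1})$, which is the desired statement.

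The main obstacle is not in this final gluing argument but has already been absorbed into Theorem~\ref{main thm rel}: one must verify that all the local constructions (special cdgas arising from the relative $(-1)$-shifted symplectic structure, $\Omega$-compatibility of the induced $1$-forms, compatibility of reduced obstruction sheaves on common roofs, and the cocycle condition on triple intersections) go through in families over $C$ to produce a genuinely relative semi-perfect obstruction theory whose fiberwise restriction is the absolute one. Once that theorem and the fiberwise comparison of Kirwan blowups in Subsection~\ref{Family blow} are in hand, deformation invariance follows formally.
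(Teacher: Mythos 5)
Your proposal is correct and follows essentially the same route as the paper: reduce to a family over a smooth affine curve, use the relative $(-1)$-shifted symplectic structure from a trivialization of $K_{W/C}$ together with Theorem~\ref{main thm rel} to get a relative semi-perfect obstruction theory on $\tilde{\mM}\to C$, identify its fibers with the absolute Kirwan partial desingularizations via Proposition~\ref{intrinsic family U general}, and apply conservation of number \cite[Proposition~3.8]{LiChang}. The paper's proof is exactly this argument, just stated more briefly.
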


\begin{rmk}
Choosing a different trivialization of $K_{W/C}$ amounts to rescaling the induced trivializations of $K_{W_c}$ on each fiber, so by Remark~\ref{scale invariance} our discussion does not depend on the specific choice of trivialization.
\end{rmk}

\bibliography{Master}
\bibliographystyle{alpha}

\end{document}